\newcommand{\pr}{\mathbb{P}}
\newcommand{\Prob}[1]{\pr\left(#1\right)}
\newcommand{\Probn}[1]{\pr_n\left(#1\right)}
\newcommand{\e}{\mathbb{E}}
\newcommand{\expec}{\mathbb{E}}
\newcommand{\Exp}[1]{\e\left[#1\right]}
\newcommand{\Expn}[1]{\e_n\left[#1\right]}
\newcommand{\Varn}[1]{\textup{Var}_n\left(#1\right)}
\newcommand{\plim}{\ensuremath{\stackrel{\pr}{\longrightarrow}}}
\newcommand{\dlim}{\ensuremath{\stackrel{d}{\longrightarrow}}}
\newcommand{\ind}[1]{\mathbbm{1}_{\left\{#1\right\}}}
\newcommand{\indE}[1]{\mathbbm{1}_{#1}}
\newcommand{\bigO}[1]{O\left(#1\right)}
\newcommand{\bigOp}[1]{O_{\scriptscriptstyle\pr}\left(#1\right)}
\newcommand\mumberthis{\addtocounter{equation}{1}\tag{\theequation}}
\newcommand{\sss}[1]{\scriptscriptstyle{#1}}
\newcommand\abs[1]{\left|#1\right|}
\newcommand{\me}{\textup{e}}
\newcommand{\dd}{\,{\rm d}}
\newcommand{\op}{o_{\sss\pr}}
\newcommand{\eqn}[1]{\begin{equation}#1\end{equation}}
\newcommand{\eqan}[1]{\begin{align}#1\end{align}}
\newtheorem{theorem}{Theorem}[section]
\newtheorem{definition}{Definition}[section]
\newtheorem{lemma}[theorem]{Lemma}
\newtheorem{proposition}[theorem]{Proposition}
\newtheorem{corollary}[theorem]{Corollary}
\newtheorem{assumption}[definition]{Assumption}
\newtheorem{conjecture}[theorem]{Conjecture}
\newtheorem{condition}{Condition}[section]
\title{Limit theorems for assortativity and clustering in null models for scale-free networks}
\author[1]{Remco van der Hofstad}
\author[1,2]{Pim van der Hoorn}
\author[1,3]{Nelly Litvak}
\author[1,3]{Clara Stegehuis}
\affil[1]{Department of Mathematics and Computer Science, Eindhoven University of Technology}
\affil[2]{Department of Physics, Northeastern University, Boston}
\affil[3]{Department of Electrical Engineering, Mathematics and Computer Science, University of Twente}
\begin{document}

\maketitle

\begin{abstract}
An important problem in modeling networks is how to generate a randomly sampled graph with given degrees. A popular model is the configuration model, a network with assigned degrees and random connections. The erased configuration model is obtained when self-loops and multiple edges in the configuration model are removed. We prove an upper bound for the number of such erased edges for regularly-varying degree distributions with infinite variance, and use this result to prove central limit theorems for Pearson's correlation coefficient and the clustering coefficient in the erased configuration model. Our results explain the structural correlations in the erased configuration model and show that removing edges leads to different scaling of the clustering coefficient. We then prove that  for the rank-1 inhomogeneous random graph, another null model that creates scale-free simple networks, the results for Pearson's correlation coefficient as well as for the clustering coefficient are similar to the results for the erased configuration model.
\end{abstract}

%

\section{Introduction and results}

\subsection{Motivation}
The configuration model \cite{Bollobas1980,Wormald1980} is an important null model to generate graphs with a given degree sequence, by assigning each node a number of half-edges equal to its degree and connecting stubs at random to form edges. Conditioned on the resulting graph being simple, its distribution is uniform over all graphs with the same degree sequence \cite{VanDerHofstad2016a}. Due to this feature the configuration model is widely used to analyze the influence of degrees on other properties or processes on networks~\cite{janson2009b,hofstad2005,hofstad2007,newman2001,riordan2012,federico2017}. 

An important property that many networks share is that their degree distributions are regularly varying, with the exponent $\gamma$ of the degree distribution satisfying $\gamma\in(1,2)$, so that the degrees have infinite variance. In this regime of degrees, the configuration model results in a simple graph with vanishing probability. To still be able to generate simple graphs with approximately the desired degree distribution, the erased configuration model (ECM) removes self-loops and multiple edges of the configuration model~\cite{britton2006}, while the empirical degree distribution still converges to the original one \cite{VanDerHofstad2016a}. 

The degree distribution is a first order characteristic of the network structure, since it is independent of the way nodes are connected. An important second order network characteristic is the correlation between degrees of connected nodes, called degree-degree correlations or network assortativity. A classical measure for these correlations computes Pearson's correlation coefficient on the vector of joint degrees of connected nodes \cite{Newman2002,Newman2003a}. In the configuration model, Pearson's correlation coefficient tends to zero in the large graph limit \cite{Hofstad2014}, so that the configuration model is only able to generate networks with neutral degree correlations.

The configuration model creates networks with asymptotic neutral degree correlations~\cite{Hofstad2014}. By this we mean that, as the size of the network tends to infinity, the joint distribution of degrees on both sides of a randomly sampled edge factorizes as the product of the sized-biased distributions. As a result, the outcome of any degree-degree correlation measure converges to zero. Although one would expect fluctuations of such measures to be symmetric around zero, it has frequently been observed that constraining a network to be simple results in so-called structural negative correlations~\cite{Hoorn2015PhysRev,Catanzaro2005,yao2017ANND,stegehuis2017b}, where the majority of measured degree-degree correlations are negative, while still converging to zero in the infinite graph limit. This is most prominent in the case where the variance of the degree distribution is infinite.
To investigate the extent to which the edge removal procedure of the erased configuration model results in structural negative correlations, we first characterize the scaling of the number of edges that have been removed. Such results are known when the degree distribution has finite variance \cite{Angel2016,Janson2009,Janson2014}. However, for scale-free distributions with infinite variance only some preliminary upper bounds have been proven \cite{Hoorn2015}. Here we prove a new upper bound and obtain several useful corollaries. Our result improves the one in \cite{Hoorn2015} while strengthening \cite[Theorem 8.13]{Hoorn2016}. We then use this bound on the number of removed edges to investigate the consequences of the edge removal procedure on Pearson's correlation coefficient in the erased configuration model. We prove a central limit theorem, which shows that the correlation coefficient in the erased configuration model converges to a random variable with negative support when properly rescaled. Thus, our result confirms the existence of structural correlations in simple networks theoretically. 

We then investigate a `global' clustering coefficient, which is the number of triangles divided by the number of triplets connected by two edges, eventually including multiple edges, see~\eqref{eq:clustdef} and~\eqref{eq:clust} for the precise definition. Thus, the clustering coefficient measures the tendency of sets of three vertices to form a triangle. In the configuration model, the clustering coefficient tends to zero whenever the exponent of the degree distribution satisfies $\gamma>4/3$, whereas it tends to infinity for $\gamma<4/3$~\cite{newman2003book} in the infinite graph limit. In this paper, we obtain more detailed results on the behavior of the clustering coefficient in the configuration model in the form of a central limit theorem. We then investigate how the edge removal procedure of the erased configuration model affects the clustering coefficient and obtain a central limit theorem for the clustering coefficient in the erased configuration model. 

Interestingly, it was shown in~\cite{ostroumova2016,prokhorenkova2014} that in simple graphs with $\gamma\in (1,2)$ the clustering coefficient converges to zero. This again shows that constraining a graph to be simple may significantly impact network statistics. We obtain a precise scaling for the clustering coefficient in ECM, which is sharper than the general upper bound in~\cite{ostroumova2016}.

 We further show that the results on Pearson's correlation coefficient and the clustering coefficient for the erased configuration model can easily be extended to another important random graph null model for simple scale-free networks: the rank-1 inhomogeneous random graph~\cite{chung2002,boguna2003}. In this model, every vertex is equipped with a weight $w_i$, and vertices are connected independently with some connection probability $p(w_i,w_j)$. We show that for a wide class of connection probabilities, the rank-1 inhomogeneous random graph also has structurally negative degree correlations, satisfying the same central limit theorem as in the erased configuration model. Furthermore, we show that for the particular choice $p(w_i,w_j)=1-\me^{-w_iw_j/(\mu n)}$, where $\mu$ denotes the average weight, the clustering coefficient behaves asymptotically the same as in the erased configuration model.

Note that in the (erased) configuration model as well as the inhomogeneous random graph, Pearson's coefficient for degree correlations and the global clustering coefficient naturally converge to zero. We would like to emphasize that this paper improves on the existing literature by establishing the {\it scaling laws} that govern the convergence of these statistics to zero. This is important because very commonly in the literature, various quantities measured in real-world networks are compared to null-models with same degrees but random rewiring. These rewired null-models are similar to a version of the inhomogeneous random graph~\cite{bannink2018,gao2018}. Without knowing the scaling of these quantities in the inhomogeneous random graph, it is not possible to asses how similar a small measured value on the real network is to that of the null model. Our results enable such analysis. In fact, we do even more, by also establishing exact limiting distributions of the rescaled Pearson's correlations coefficient and clustering coefficient, which are the most standard measures in statistical analysis of networks.

\subsection{Outline of the paper}
The remainder of paper is structured as follows. In the next three sections we formally introduce the models, the measures of interest and some additional notation. Then, in Section \ref{ssec:results}, we summarize our main results and discuss important insights obtained from them. We give a heuristic outline of our proof strategy in Section \ref{sec:proof_overview} and recall several results for regularly-varying degrees. Then we proceed with proving our result for Pearson's correlation coefficient in Section \ref{sec:pearson} and the clustering coefficient in Section \ref{sec:clustering}. We then show  in Section~\ref{sec:proof_hvm} how the proofs for Pearson's correlation coefficient and the clustering coefficient in the erased configuration model can be adapted to prove the central limit theorems for the rank-1 inhomogeneous random graph. Finally, Appendix \ref{sec:appendix} contains the proof for the number of erased edges, Theorem \ref{thm:scaling_erased_edges}, as well as some additional technical results. 

\subsection{Configuration model with scale-free degrees}

The first models of interest in this work are the configuration model and the erased configuration model. 
Given a vertex set $[n] := \{1, 2, \dots, n\}$ and a sequence ${\bf D}_n = \{D_1, D_2, \dots, D_n\}$, whose sum $\sum_{i \in [n]} D_i$ is even, the configuration model (\texttt{CM}) constructs a graph $G_n$ with this degree sequence by assigning to each node $i$, $D_i$ stubs and then proceeds by connecting stubs at random to form edges. This procedure will, in general, create a multi-graph with self-loops and multiple edges between two nodes. To make sure that the resulting graph is simple we can remove all self-loops and replace multiple edges between nodes by just one edge. This model is called the erased configuration model (\texttt{ECM}). 

We will denote by $\texttt{\texttt{CM}}({\bf D}_n)$ and $\texttt{ECM}({\bf D}_n)$ graphs generated by, 
respectively, the standard and erased configuration model, starting from the degree sequence ${\bf D}_n$. We often couple both constructions by first constructing a graph via the standard configuration model and then remove all the self-loops and multiple edges to create the erased configuration model.  In this case we  write $G_n$ for the graph created by the \texttt{CM} and $\widehat{G}_n$ for the \texttt{ECM} graph constructed from $G_n$. In addition we use the hats to distinguish between objects in the \texttt{CM} and the \texttt{ECM}. For example, $D_i$ denotes the degree of node $i$ in the graph $\texttt{CM}({\bf D}_n)$ while its degree in $\texttt{ECM}({\bf D}_n)$ is denoted by $\widehat{D}_i$. 
 
We consider degree sequences ${\bf D}_n = \{D_1, D_2, \dots, D_n\}$ where the degrees $D_i$ are i.i.d. copies of an 
integer-valued random variable $\mathcal{D}$ with regularly-varying distribution
\begin{equation}
	\Prob{\mathscr{D} > t} = \mathcal{L}(t) t^{-\gamma}, \quad \gamma > 1.
\label{eq:distribution_degrees}
\end{equation}
Here, the function $\mathcal{L}(t)$ is slowly varying at infinity and $\gamma$ is the exponent of the 
distribution. 

As commonly done in the literature, $D_n$ may include a correction term, equal to one, in order to make the sum $L_n = \sum_{i \in [n]} D_i$ even. We shall ignore this correction term, since it does not effect the asymptotic results. In the remainder of this paper $\mathscr{D}$ always refers to a random variable with distribution \eqref{eq:distribution_degrees}.

\subsection{Rank-1 inhomogeneous random graphs}
Another model that generates networks with scale-free degrees is the rank-1 inhomogeneous random graph~\cite{chung2002,boguna2003}. In this model, every vertex is equipped with a weight. We assume these weights are an i.i.d.\ sample from the scale-free distribution~\eqref{eq:distribution_degrees}. Then, vertices $i$ and $j$ with weights $w_i$ and $w_j$ are connected with some connection probability $p(w_i,w_j)$. Let the expected value of~\eqref{eq:distribution_degrees} be denoted by $\mu$. We then assume the following conditions on the connection probabilities, similarly to~\cite{hofstad2017b}:
\begin{condition}[Class of connection probabilities]\label{cond:conhvm}
Assume that
\[
	p(w_i,w_j)=\frac{w_iw_j}{\mu n}h\left(\frac{w_iw_j}{\mu n}\right),
\]
for some continuous function $h:[0,\infty)\mapsto [0,1]$ satisfying
\begin{enumerate}[(i)]
	\item $h(0)=1$ and $h(u)$ decreases to 0 as $u\to\infty$,
	\item $q(u) = uh(u)$ increases to 1 as $u\to\infty$.
	\item There exists $u_1>0$ such that $h(u)$ is differentiable on $(0,u_1]$ and $h'_+(0)<\infty$.
\end{enumerate}
\end{condition}

This class includes the commonly used connection probabilities $q(u)=(u \wedge 1)$, where $(x \wedge y) := \min(x,y)$ (the Chung Lu setting)~\cite{chung2002}, $q(u)=1-\me^{-u}$ (the Poisson random graph)~\cite{norros2006} and $q(u)=u/(1+u)$ (the maximal entropy random graph)~\cite{park2004,colomer2012,hoorn2017sparse}.
Note that within the class of connection probabilities satisfying Condition~\ref{cond:conhvm}, $q(u)\leq (u \wedge 1)$. Note that $p(w_i,w_j) = q\left(\frac{w_iw_j}{\mu n}\right)$.

\subsection{Central quantities}

Pearson's correlations coefficient $r(G_n)\in[-1,1]$ is a measure for degree-degree correlations. For an 
undirected multigraph $G_n$, this measure is defined as (see \cite{Hofstad2014}),
\begin{equation}
r(G_n) = \frac{\sum_{i,j \in [n]} X_{ij} D_i D_j - \frac{1}{L_n} \left(\sum_{i \in [n]} D_i^2\right)^2}
{\sum_{i \in [n]} D_i^3 - \frac{1}{L_n} \left(\sum_{i \in [n]} D_i^2\right)^2},
\label{eq:pearson}
\end{equation}
where $X_{ij}$ denotes the number of edges between nodes $i$ and $j$ in $G_n$ and self-loops are counted twice.  We write $r_n$ for Pearson's correlation coefficient on $G_n$ generated by \texttt{CM} and $\widehat{r}_n$ if $G_n$ is generated by \texttt{ECM}.

The clustering coefficient of graph $G_n$ is defined as
\begin{equation}\label{eq:clustdef}
C(G_n)=\frac{3\triangle_n}{\text{number of connected triples}},
\end{equation}
where $\triangle_n$ denotes the number of triangles in the graph. The clustering coefficient can be written as
\begin{equation}\label{eq:clust}
C(G_n)=\frac{6\triangle_n}{\sum_{i \in [n]} D_i(D_i-1)}=\frac{6\sum_{1\leq i< j< k\leq n} X_{ij}X_{jk}X_{ik}}{\sum_{i \in [n]} D_i(D_i-1)},
\end{equation}
where $X_{ij}$ again denotes the number of edges between vertex $i$ and $j$ in $G_n$. For simple graphs, $C(G_n) \in [0,1]$. However, for multigraphs, $C(G_n)$ may exceed one.
As with Pearson's correlation coefficient, we denote by $C_n$ the clustering coefficient in $G_n$ generated by \texttt{CM}, while $\widehat{C}_n$ is the clustering coefficient in $G_n$ generated by \texttt{ECM}.

\subsection{Notation}\label{sec:notation}
We write $\mathbb{P}_n$ and $\mathbb{E}_n$ for, respectively, the conditional probability and expectation, with respect to the sampled degree sequence ${\bf D}_n$. We use $\overset{d}\longrightarrow$ for convergence in distribution, and $\plim $ for convergence in probability. We say that a sequence of events $(\mathcal{E}_n)_{n\geq 1}$ happens with high probability (w.h.p.) if $\lim_{n\to\infty}\Prob{\mathcal{E}_n}=1$. Furthermore, we write $f(n)=o(g(n))$ if $\lim_{n\to\infty}f(n)/g(n)=0$, and $f(n)=O(g(n))$ if $|f(n)|/g(n)$ is uniformly bounded, where $(g(n))_{n\geq 1}$ is nonnegative. 

We say that $X_n=\bigOp{g(n)}$ for a sequence of random variables $(X_n)_{n\geq 1}$ if $|X_n|/g(n)$ is a tight sequence of random variables, and $X_n=\op(g(n))$ if $X_n/g(n)\plim 0$. Finally, we use $(x \wedge y)$ to denote the minimum of $x$ and $y$ and $(x\vee y)$ to denote the maximum of $x$ and $y$.

\subsection{Results}\label{ssec:results}

In this paper we study the interesting regime when $1<\gamma<2$, so that the degrees have finite mean but infinite variance. When $\gamma > 2$, the number of removed edges is constant in $n$ and hence asymptotically there will be no difference between the \texttt{CM} and \texttt{ECM}. We establish a new asymptotic upper bound for the number of erased edges in the \texttt{ECM} and prove new limit theorems for Pearson's correlation coefficient and the clustering coefficient. We further show that the limit theorems for Pearson and clustering for the inhomogeneous random graph are very similar to the ones obtained for the \texttt{ECM}.

Our limit theorems involve random variables with joint stable distributions, which we define as follows.
Let 
\begin{equation}\label{eq:def_Gamma}
\Gamma_i=\sum_{j=1}^i\xi_j \hspace{20pt} i \ge 1,
\end{equation} with $(\xi_j)_{j\geq 1}$ i.i.d exponential random variables with mean 1. Then we define,
for any integer $p \ge 2$, 
\begin{equation}\label{eq:def_S_gamma}
\mathcal{S}_{\gamma/p} = \sum_{i=1}^{\infty}\Gamma_i^{-p/\gamma}.
\end{equation}
We remark that for any $\alpha > 1$ we have that $\sum_{i=1}^{\infty}\Gamma_i^{-\alpha}$ has a stable distribution with stability index $\alpha$ (see \cite[Theorem 1.4.5]{samorodnitsky1994}).

In the remainder of this section we will present the theorems and highlight their most important aspects in view of the methods and current literature. We start with $\widehat{r}_n$.

\begin{theorem}[Pearson in the \texttt{ECM}]\label{thm:clt_pearson_ecm}
Let ${\bf D}_n$ be sampled from $\mathscr{D}$ with $1 < \gamma < 2$ and $\Exp{\mathscr{D}} = \mu$. Then, if $G_n = \emph{\texttt{ECM}}({\bf D}_n)$, there exists a slowly-varying function $\mathcal{L}_1$ such that,
\[
	\mu\mathcal{L}_1(n)n^{1 - \frac{1}{\gamma}} \, \widehat{r}_n \dlim -\frac{\mathcal{S}_{\gamma/2}^2}{\mathcal{S}_{\gamma/3}},
\]
where $\mathcal{S}_{\gamma/2}$ and $\mathcal{S}_{\gamma/3}$ are given by \eqref{eq:def_S_gamma}.
\end{theorem}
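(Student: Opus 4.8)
The plan is to write $\widehat{r}_n=\widehat{N}_n/\widehat{M}_n$ with numerator $\widehat{N}_n=\sum_{i,j\in[n]}\widehat{X}_{ij}\widehat{D}_i\widehat{D}_j-\widehat{L}_n^{-1}\bigl(\sum_{i}\widehat{D}_i^2\bigr)^2$ and denominator $\widehat{M}_n=\sum_{i}\widehat{D}_i^3-\widehat{L}_n^{-1}\bigl(\sum_{i}\widehat{D}_i^2\bigr)^2$, where $\widehat{L}_n=\sum_{i\in[n]}\widehat{D}_i$, and to pin down the scaling of each factor separately while keeping track of the fact that both are functionals of the \emph{same} rescaled order statistics of the degree sequence. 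The backbone is the LePage-type representation: conditionally on ${\bf D}_n$, the rescaled top order statistics $D_{(i)}/\bigl(\mathcal{L}_0(n)n^{1/\gamma}\bigr)$ converge jointly to $\bigl(\Gamma_i^{-1/\gamma}\bigr)_{i\ge1}$, which is precisely the mechanism producing $\sum_{i}D_i^p/\bigl(\mathcal{L}_0(n)^pn^{p/\gamma}\bigr)\dlim\mathcal{S}_{\gamma/p}$ for $p=2,3,4$ (cf.\ \cite[Theorem~1.4.5]{samorodnitsky1994}). I work throughout under $\Probn{\cdot}$ and $\Expn{\cdot}$, and push the conditional conclusions back to the annealed statement at the end.

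For the denominator, the subtracted term $\widehat{L}_n^{-1}\bigl(\sum_{i}\widehat{D}_i^2\bigr)^2=\bigOp{n^{4/\gamma-1}}$ is negligible against $\sum_{i}\widehat{D}_i^3$, since $4/\gamma-1<3/\gamma$ for $\gamma>1$. It then remains to show that erasure does not disturb the leading order, i.e.\ $\sum_{i}\widehat{D}_i^3=\sum_{i}D_i^3\bigl(1+\op(1)\bigr)$. This is where Theorem~\ref{thm:scaling_erased_edges} enters: the degree removed at a vertex is dominated by the multiple edges towards the hubs, and a direct estimate shows that for every fixed top rank $i$ this correction is of smaller order than $D_{(i)}\asymp n^{1/\gamma}$ (the self-loop correction at a hub is of order $n^{2/\gamma-1}\ll n^{1/\gamma}$, and the multiple-edge correction is of order $n^{2-\gamma}\ll n^{1/\gamma}$ for $\gamma>1$). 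Hence $\widehat{D}_{(i)}/\bigl(\mathcal{L}_0(n)n^{1/\gamma}\bigr)\to\Gamma_i^{-1/\gamma}$ as well, and $\sum_{i}\widehat{D}_i^3/\bigl(\mathcal{L}_0(n)^3n^{3/\gamma}\bigr)\dlim\mathcal{S}_{\gamma/3}$.

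The heart is the numerator. Conditioning on ${\bf D}_n$, I first replace the indicator $\widehat{X}_{ij}=\ind{X_{ij}\ge1}$ by its conditional mean: a Poisson approximation in the configuration model gives $\Probn{X_{ij}\ge1}=1-\me^{-\lambda_{ij}}+o(\cdot)$, where $\lambda_{ij}=D_iD_j/L_n$. A second-moment estimate for $\sum_{i\ne j}\widehat{X}_{ij}D_iD_j$, controlling the dependence between distinct pairs, justifies this replacement up to $\op$ of the target order, and Theorem~\ref{thm:scaling_erased_edges} again lets me substitute $\widehat{D}_i,\widehat{L}_n$ by $D_i,L_n$ inside the weighted sums. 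Expanding $L_n^{-1}\bigl(\sum_{i}D_i^2\bigr)^2=L_n^{-1}\sum_{i\ne j}D_i^2D_j^2+L_n^{-1}\sum_iD_i^4$ and subtracting term by term then reduces the numerator to
\[
\widehat{N}_n=-\sum_{i\ne j}\bigl(\lambda_{ij}-1+\me^{-\lambda_{ij}}\bigr)D_iD_j\;-\;\frac{1}{L_n}\sum_{i\in[n]}D_i^4\;+\;\op\bigl(n^{4/\gamma-1}\bigr),
\]
where both explicit terms are manifestly nonnegative in modulus, which is the source of the negative sign in the limit. Since $\lambda_{ij}\to\infty$ for every fixed pair of top ranks, there $\lambda_{ij}-1+\me^{-\lambda_{ij}}=\lambda_{ij}\bigl(1+\op(1)\bigr)$ and the sum is dominated by such hub pairs; inserting the order statistics gives $\sum_{i\ne j}\bigl(\lambda_{ij}-1+\me^{-\lambda_{ij}}\bigr)D_iD_j=\mu^{-1}\mathcal{L}_0(n)^4n^{4/\gamma-1}\bigl(\mathcal{S}_{\gamma/2}^2-\mathcal{S}_{\gamma/4}\bigr)\bigl(1+\op(1)\bigr)$, using $\sum_{i\ne j}(\Gamma_i\Gamma_j)^{-2/\gamma}=\mathcal{S}_{\gamma/2}^2-\mathcal{S}_{\gamma/4}$, whereas $L_n^{-1}\sum_iD_i^4=\mu^{-1}\mathcal{L}_0(n)^4n^{4/\gamma-1}\mathcal{S}_{\gamma/4}\bigl(1+\op(1)\bigr)$. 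The two $\mathcal{S}_{\gamma/4}$ contributions cancel exactly, leaving $\widehat{N}_n=-\mu^{-1}\mathcal{L}_0(n)^4n^{4/\gamma-1}\mathcal{S}_{\gamma/2}^2\bigl(1+\op(1)\bigr)$.

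Finally, because $\mathcal{S}_{\gamma/2}$, $\mathcal{S}_{\gamma/3}$ and the numerator functional are all continuous functionals of the single limiting point process $\bigl(\Gamma_i^{-1/\gamma}\bigr)_{i\ge1}$, the pair $\bigl(\mu\mathcal{L}_0(n)^{-4}n^{1-4/\gamma}\widehat{N}_n,\ \mathcal{L}_0(n)^{-3}n^{-3/\gamma}\widehat{M}_n\bigr)$ converges jointly to $\bigl(-\mathcal{S}_{\gamma/2}^2,\ \mathcal{S}_{\gamma/3}\bigr)$; since $\mathcal{S}_{\gamma/3}>0$ almost surely, the continuous mapping theorem applied to the ratio yields $\mu\,\mathcal{L}_0(n)^{-1}n^{1-1/\gamma}\,\widehat{r}_n\dlim-\mathcal{S}_{\gamma/2}^2/\mathcal{S}_{\gamma/3}$, which is the claim with $\mathcal{L}_1=1/\mathcal{L}_0$. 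I expect the main obstacle to be the numerator concentration step: in the configuration model the $X_{ij}$ are dependent, so bounding the variance of $\sum_{i\ne j}\widehat{X}_{ij}D_iD_j$ by $\op\bigl(n^{4/\gamma-1}\bigr)^2$ and making the Poisson approximation uniform over the hub pairs (where $\lambda_{ij}$ is large and the heavy $D_iD_j$ weights are concentrated) is delicate, as is transferring the single-vertex erasure bound of Theorem~\ref{thm:scaling_erased_edges} into control of the weighted sums, where errors at the hubs are amplified.
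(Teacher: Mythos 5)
Your proposal is correct in substance and, at its core, follows the same route as the paper: isolate the negative term $\widehat{L}_n^{-1}\bigl(\sum_i \widehat{D}_i^2\bigr)^2$, prove its stable limit via the joint convergence of $\sum_i D_i^2$ and $\sum_i D_i^3$, replace $\widehat{X}_{ij}$ by $1-\me^{-D_iD_j/L_n}$, and transfer hatted to unhatted degree sums via the erased-edges bound (the paper does this through Corollary \ref{cor:scaling_degrees_erased_edges}, $\sum_i D_i^p Y_i=\op\bigl(n^{p/\gamma+2-\gamma+\delta}\bigr)$, which is the rigorous form of your per-hub estimates, and which also gives $\sum_i\widehat{D}_i^3=\sum_iD_i^3(1+\op(1))$ in one line, sidestepping your order-statistic argument that would still need tail control beyond any fixed rank $K$). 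The one place where your bookkeeping diverges — and where you over-engineer — is the numerator. You keep the positive term to leading order $n^{4/\gamma-1}$ and arrange an exact cancellation of the two $\mathcal{S}_{\gamma/4}$ contributions, which then forces you (as you anticipate in your last paragraph) into a variance estimate for $\sum_{i\ne j}\widehat{X}_{ij}D_iD_j$ and a uniform Poisson approximation over hub pairs. None of this is needed: since $1-\me^{-x}\le(1\wedge x)$ and $D_1D_2$ is regularly varying with index $\gamma$, Lemma \ref{lem:karamata} gives $\sum_{i<j}\bigl(1-\me^{-D_iD_j/L_n}\bigr)D_iD_j=\bigOp{n^{3-\gamma+\delta}}$, and $3-\gamma<4/\gamma-1$ for all $\gamma\in(1,2)$ because $\gamma+4/\gamma>4$. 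So the \emph{entire} positive part is $\op\bigl(n^{4/\gamma-1}\bigr)$; being nonnegative, it is controlled by a first-moment bound plus Markov's inequality on a good event (the paper's Lemma \ref{prop:ecm_joint_moments} and Proposition \ref{prop:pearson_ecm_positive_part}) — no second moment, no pairwise-dependence analysis, no fluctuation control. Your ``cancellation'' is then tautological: both $\mathcal{S}_{\gamma/4}$ terms arise from rewriting $L_n^{-1}\bigl(\sum_iD_i^2\bigr)^2=\sum_{i\ne j}\lambda_{ij}D_iD_j+L_n^{-1}\sum_iD_i^4$, so the numerator is simply $-L_n^{-1}\bigl(\sum_iD_i^2\bigr)^2+\op\bigl(n^{4/\gamma-1}\bigr)$ with no delicate balance to verify. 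In short: no gap, but the obstacles you flag as ``the heart'' dissolve once one notices the positive part lives at the strictly smaller scale $n^{3-\gamma}$, which is precisely the structural fact the paper's proof is organized around.
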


The following theorem shows that the correlation coefficient for all rank-1 inhomogeneous random graphs satisfying Condition~\ref{cond:conhvm} behaves the same as in the erased configuration model:
\begin{theorem}[Pearson in the rank-1 inhomogeneous random graph]\label{thm:clt_pearson_hvm}
Let ${\bf W}_n$ be sampled from $\mathscr{D}$ with $1 < \gamma < 2$ and $\Exp{\mathscr{D}} = \mu$. Then, when $G_n$ is a rank-1 inhomogeneous random graph with weights ${\bf W}_n$ and connection probabilities satisfying Condition~\ref{cond:conhvm}, there exists a slowly-varying function $\mathcal{L}_1$ such that,
\[
	\mu\mathcal{L}_1(n)n^{1 - \frac{1}{\gamma}} \, r(G_n) \dlim -\frac{\mathcal{S}_{\gamma/2}^2}{\mathcal{S}_{\gamma/3}},
\]
where $\mathcal{S}_{\gamma/2}$ and $\mathcal{S}_{\gamma/3}$ are given by \eqref{eq:def_S_gamma}.
\end{theorem}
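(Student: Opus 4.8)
The plan is to reduce Pearson's coefficient for the rank-1 inhomogeneous random graph to exactly the computation that yields Theorem~\ref{thm:clt_pearson_ecm}, by showing that the three weight-dependent sums appearing in \eqref{eq:pearson} have the same stable limits as their \texttt{ECM} counterparts. The conceptual bridge is that, conditionally on the weights ${\bf W}_n$, the edge indicators $X_{ij}$ are independent with $\Expn{X_{ij}}=q\big(\tfrac{w_iw_j}{\mu n}\big)$, so the conditional expected degree is $d_i:=\sum_{j\neq i}q\big(\tfrac{w_iw_j}{\mu n}\big)$. This is precisely the form taken by the expected erased degree in the \texttt{ECM}, where the role of $q$ is played by $u\mapsto 1-\me^{-u}$ (the Poisson approximation for the number of \texttt{CM} edges between two vertices). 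Since $1-\me^{-u}$ satisfies Condition~\ref{cond:conhvm}, the \texttt{ECM} is for these statistics a special case of the present model, and the content of the theorem is that the limit does not depend on the particular admissible $q$.

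First I would establish degree concentration. Because $D_i$ conditioned on ${\bf W}_n$ is a sum of independent indicators, a Bernstein/Chernoff bound gives $D_i=d_i(1+\op(1))$ with enough uniformity; and because $q(u)=u\,h(u)$ with $h(0)=1$, while the bulk of the sum defining $d_i$ falls in the regime $\tfrac{w_iw_j}{\mu n}\to0$, one gets $d_i=w_i(1+o(1))$ even for the largest weights $w_i\asymp n^{1/\gamma}$ (the saturation of $q$ and the removal of the self-term contribute only lower-order corrections, exactly as the erased half-edges do in the \texttt{ECM}). Feeding this into the stable-limit theory for sums of regularly-varying variables of index $\gamma/p<1$ recalled for the degree distribution, I would then show
\[
\sum_{i\in[n]}D_i^2=(1+\op(1))\sum_{i\in[n]}w_i^2,\qquad \sum_{i\in[n]}D_i^3=(1+\op(1))\sum_{i\in[n]}w_i^3,
\]
so that, after the same normalisation as in Theorem~\ref{thm:clt_pearson_ecm}, these converge \emph{jointly} to $\mathcal{S}_{\gamma/2}$ and $\mathcal{S}_{\gamma/3}$ through the common order-statistics representation $\Gamma_i^{-1/\gamma}$ underlying \eqref{eq:def_S_gamma} (note that $\mathcal{S}_{\gamma/2}$ and $\mathcal{S}_{\gamma/3}$ are built from the same $\Gamma_i$ and are therefore dependent, which is why the limit is a ratio of correlated variables). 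This also fixes the slowly-varying prefactor $\mathcal{L}_1$, which depends only on the common weight distribution and hence coincides with the one in the \texttt{ECM}.

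The heart of the argument is the numerator $\sum_{i,j}X_{ij}D_iD_j-\tfrac1{L_n}\big(\sum_iD_i^2\big)^2$. Writing $u_{ij}=\tfrac{w_iw_j}{\mu n}$, the neutral correction satisfies $\tfrac1{L_n}\big(\sum_iD_i^2\big)^2\approx\sum_{i,j}u_{ij}w_iw_j\asymp n^{4/\gamma-1}$, while replacing $D_i$ by $d_i\approx w_i$ and taking conditional means gives $\sum_{i,j}X_{ij}D_iD_j\approx\sum_{i,j}q(u_{ij})w_iw_j\asymp n^{3-\gamma}$. Since $3-\gamma<4/\gamma-1$ for $1<\gamma<2$ (equivalently $(\gamma-2)^2>0$), the positive term is negligible against the correction: the dominant pairs of the correction have $u_{ij}\asymp n^{2/\gamma-1}\to\infty$, so $q(u_{ij})\le1\ll u_{ij}$ there irrespective of the shape of $q$, and the numerator equals $-\tfrac1{L_n}\big(\sum_iw_i^2\big)^2(1+\op(1))$. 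This is the precise cancellation structure of the \texttt{ECM} proof, and it is why every admissible $q$ yields the same limit. Combining with $M_n=\sum_iD_i^3-\tfrac1{L_n}\big(\sum_iD_i^2\big)^2=\sum_iD_i^3(1+\op(1))$ (using $n^{4/\gamma-1}=o(n^{3/\gamma})$ for $\gamma>1$) gives $r(G_n)\sim-\tfrac1{L_n}\big(\sum_iD_i^2\big)^2\big/\sum_iD_i^3$, which converges as claimed.

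I expect the main obstacle to be the rigorous treatment of $\sum_{i,j}X_{ij}D_iD_j$: since $D_i=\sum_jX_{ij}$, this is a genuine polynomial in the $X_{ij}$ rather than a quantity to which conditional expectation applies directly. Handling it requires (i) replacing $D_iD_j$ by $d_id_j$ inside the sum, which demands concentration strong and uniform enough to bound $\sum_{i,j}X_{ij}(D_iD_j-d_id_j)$ against the deficit of order $n^{4/\gamma-1}$, and (ii) showing that the conditional fluctuations of $\sum_{i,j}X_{ij}d_id_j$ are of smaller order than this deficit, so that the surviving randomness enters only through the weights via $\mathcal{S}_{\gamma/2}$ and $\mathcal{S}_{\gamma/3}$. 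A second, bookkeeping-level difficulty is verifying that the normalising sequences for $\sum_iD_i^2$ and $\sum_iD_i^3$ match those of the \texttt{ECM} exactly, so that the identical $\mathcal{L}_1$ appears; this follows once the degree-to-weight replacement is made uniform over the top order statistics.
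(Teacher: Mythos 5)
Your proposal is correct and follows essentially the same route as the paper: conditional degree concentration $D_i = w_i(1+\op(1))$ via Bernstein's inequality, transfer of the stable-law limits to $\sum_{i\in[n]} D_i^2$ and $\sum_{i\in[n]} D_i^3$ so that the negative part $r_n^-$ behaves as in Proposition~\ref{prop:clt_pearson_negative_part}, and negligibility of the positive part using $q(u)\le (u\wedge 1)$ together with the Karamata-type estimate of Lemma~\ref{lem:karamata}, exactly as in the paper's adaptation of Proposition~\ref{prop:pearson_ecm_positive_part}. One remark: your anticipated obstacle (ii) is self-imposed --- since the whole positive term is smaller than the negative one by a polynomial factor (of order $n^{-(2-\gamma)^2/\gamma}$ up to slowly-varying corrections), a one-sided first-moment Markov bound on the nonnegative sum $\sum_{1\le i<j\le n}X_{ij}D_iD_j$ suffices, and no control of its conditional fluctuations or of the replacement error $\sum_{1\le i<j\le n}X_{ij}\left(D_iD_j-d_id_j\right)$ is required.
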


Interestingly, the behavior of Pearson's correlation coefficient in the rank-1 inhomogeneous random graph does not depend on the exact form of the connection probabilities, as long as these connection probabilities satisfy Condition~\ref{cond:conhvm}.

\paragraph*{Asymptotically vanishing correlation coefficient.}
It has been known for some time, c.f. \cite[Theorem 3.1]{Hofstad2014}, that when the degrees ${\bf D}_n$ are sampled from a degree distribution with infinite third moment, any limit point of Pearson's correlation coefficient is non-positive. Theorem \ref{thm:clt_pearson_ecm} confirms this, showing that for the erased configuration model, with infinite second moment, the limit is zero. Moreover, Theorem \ref{thm:clt_pearson_ecm} gives the exact scaling in terms of the graph size $n$, which has not been available in the literature. Compare e.g. to \cite[Theorem 5.1]{Hoorn2016}, where only the scaling of the negative part of $\widehat{r}_n$ is given.

\paragraph*{Structural negative correlations.}
It has also been observed many times that imposing the requirement of simplicity on graphs gives rise to so-called structural negative correlations, see e.g. \cite{Hoorn2015PhysRev,Catanzaro2005,yao2017ANND,stegehuis2017b}. Our result is the first theoretical confirmation of the existence of structural negative correlations, as a result of the simplicity constraint on the graph. To see this, note that the distributions of the random variables $\mathcal{S}_{\gamma/2}$ and $\mathcal{S}_{\gamma/3}$ have support on the positive real numbers. Therefore, Theorem \ref{thm:clt_pearson_ecm} shows that when we properly rescale Pearson's correlation coefficient in the erased configuration model, the limit is a random variable whose distribution only has support on the negative real numbers. Note that this result implies that when multiple instances of \texttt{ECM} graphs are generated and Pearson's correlation coefficient is measured, the majority of the measurements will yield negative, although small, values. These small values have nothing to do with the network structure but are an artifact of the constraint that the resulting graph is simple. Interestingly, Theorem~\ref{thm:clt_pearson_hvm} shows that the same result holds for rank-1 inhomogeneous random graphs, indicating that structural negative correlations also exist in these models and thus further supporting the explanation that such negative correlations result from the constraint that the graphs are simple.

\paragraph*{Pearson in \texttt{ECM} versus \texttt{CM}.}
Currently we only have a limit theorem for the erased model, in the scale-free regime $1 < \gamma < 2$. 
Interestingly, and also somewhat unexpectedly, proving a limit theorem for \texttt{CM}, which is a simpler model, turns out to be more involved. The main reason for this is that in the \texttt{ECM}, the positive part of $r(G_n)$, determined by $\sum_{i,j} X_{ij} D_i D_j$ is negligible with respect to the other term since a polynomial in $n$ number of edges are removed (see Section \ref{ssec:pearson_ecm_heuristics} for more details). Therefore, the negative part determines the distribution of the limit. In the \texttt{CM} this is no longer true and hence the distribution is determined by the  tricky balance between the positive and the negative term, and their fluctuations. This requires more involved methods to analyze than we could develop so far. \color{red} Below, we state a conjecture about this case, and state a partial result concerning the scaling of its variance that supports this conjecture:

\begin{conjecture}[Scaling Pearson for \texttt{CM}]
\label{conj-Pearson-CM}
As $n\rightarrow \infty$, there exists some {\em random} $\sigma^2$ such that
	\eqn{
	\label{CLT-Pearson-CM}
	\sqrt{n}r_n \dlim \mathcal{N}(0,\sigma^2).
	}
\end{conjecture}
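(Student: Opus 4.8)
We work conditionally on the degree sequence $\mathbf{D}_n$, writing $\Expn{\cdot}$ and $\Varn{\cdot}$ for mean and variance over the random pairing of stubs. Since the denominator $M_n := \sum_{i\in[n]} D_i^3 - \tfrac{1}{L_n}(\sum_{i\in[n]} D_i^2)^2$ and the subtracted term $\tfrac{1}{L_n}(\sum_{i\in[n]} D_i^2)^2$ in the numerator of \eqref{eq:pearson} are both $\mathbf{D}_n$-measurable, the only randomness in $r_n$ given $\mathbf{D}_n$ comes from $T_n := \sum_{i,j\in[n]} X_{ij} D_iD_j$. The plan is therefore to prove a \emph{conditional} central limit theorem for $T_n$ and to track the resulting normalising variance. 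Writing $a_s$ for the degree of the vertex to which stub $s$ belongs, so that $T_n=\sum_s a_s a_{\pi(s)}$ for the random matching $\pi$, a direct computation with the pairing probabilities gives $\Expn{T_n} = \tfrac{1}{L_n-1}\bigl[(\sum_i D_i^2)^2 - \sum_i D_i^3\bigr]$. Hence, writing $N_n$ for the numerator, the deterministic drift $\sqrt n\,\Expn{N_n}/M_n$ is dominated by $-\sqrt n\,(\sum_i D_i^3)/(L_n M_n) \asymp -1/(\mu\sqrt n)\to 0$, because the two $(\sum_i D_i^2)^2$ contributions nearly cancel; the limit will therefore be centred.

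First I would pin down the variance scaling. Using $\sum_s a_s^2 = \sum_i D_i^3$ together with the second-moment pairing probabilities, the leading contribution is $\Varn{T_n} = \tfrac{2}{L_n}(\sum_i D_i^3)^2\,(1+o(1))$, the remaining terms (involving $\sum_i D_i^5$ and the disjoint-pair corrections) being of strictly smaller order precisely because $\gamma<2$. Since $M_n = \sum_i D_i^3\,(1+o(1))$, the conditional variance of $\sqrt n\, r_n$ is $v_n := n\,\Varn{T_n}/M_n^2$, whose leading behaviour is $2n/L_n\to 2/\mu$; this identifies the candidate $\sigma^2$ in \eqref{CLT-Pearson-CM}. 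Deciding whether $v_n$ actually converges, and whether its limit is deterministic or genuinely random, is delicate exactly because $\sum_i D_i^3$ lies in the domain of attraction of a stable law of index $\gamma/3<1$ and does not self-average; this is the ``tricky balance'' and the reason the result is only conjectured.

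For the conditional CLT I would reveal the uniform matching one pair at a time and analyse the Doob martingale $T_n - \Expn{T_n} = \sum_k \Delta_k$. The crucial observation is that, although a single high-degree \emph{vertex} carries a positive fraction of the total variance, a single \emph{pairing} contributes an increment of size at most $D_{(1)}^2 \asymp n^{2/\gamma}$, which is negligible against the standard deviation $\sqrt{\Varn{T_n}}\asymp n^{3/\gamma - 1/2}$ exactly because $1/\gamma>1/2$ when $\gamma<2$. The conditional Lindeberg condition should then hold, and what remains is to show that the predictable quadratic variation $\sum_k \Varn{\Delta_k\mid\mathcal{F}_{k-1}}$ concentrates around $\Varn{T_n}$. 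Combining the martingale CLT for $\sqrt n\,(T_n-\Expn{T_n})/M_n$ with the vanishing of the drift would then give $\sqrt n\, r_n \dlim \mathcal{N}(0,\sigma^2)$.

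The main obstacle is the concentration of this predictable quadratic variation under heavy-tailed weights. Because the top $O(1)$ vertices each account for a positive fraction of the variance (their contributions being asymptotically Gaussian, since each partner-degree sum aggregates $\asymp n^{1/\gamma}$ terms that are individually bounded by $n^{1/\gamma}\ll n^{2/\gamma-1/2}$), the quadratic variation is a sum to which no off-the-shelf weak law applies. One must instead treat the top $K$ vertices exactly, control the weak dependence between their contributions induced by the single shared matching, show that the remainder obeys an ordinary weak law, and then let $K\to\infty$. It is precisely in this step that one would determine whether $\sigma^2$ is the constant $2/\mu$ or retains residual randomness from the order statistics $(D_{(j)})_j$, and hence whether the limit in \eqref{CLT-Pearson-CM} is a single Gaussian or a Gaussian variance-mixture.
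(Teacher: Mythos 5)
You should first be aware that the paper does not prove this statement at all: it is explicitly a conjecture, supported only by the heuristic in Section~\ref{sec:int_conj} and by the partial result Lemma~\ref{lem-Pearon-CM-Var}, proved in Section~\ref{sec:lempearsoncm}. Your overall architecture (condition on the degrees, observe that all randomness sits in $T_n=\sum_{i,j}X_{ij}D_iD_j$, kill the drift, prove a conditional CLT, and identify $\sigma^2$ as the limit of $n\Varn{r_n}$) matches the paper's heuristic, and your martingale/Lindeberg route is a genuine attempt to go beyond it; your formula $\Expn{T_n}=\frac{1}{L_n-1}\bigl[(\sum_i D_i^2)^2-\sum_i D_i^3\bigr]$ and the vanishing of the drift are correct, as is the observation that single-pairing increments of size $D_{(1)}^2\asymp n^{2/\gamma}$ are negligible against $\sqrt{\Varn{T_n}}\asymp n^{3/\gamma-1/2}$ when $\gamma<2$.

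However, your variance computation contains a concrete error that Lemma~\ref{lem-Pearon-CM-Var} directly contradicts. You assert $\Varn{T_n}=\frac{2}{L_n}(\sum_i D_i^3)^2(1+o(1))$ with all corrections ``of strictly smaller order precisely because $\gamma<2$,'' hence $v_n\to 2/\mu$. Decompose $T_n=2\sum_{i<j}X_{ij}D_iD_j+\sum_i X_{ii}D_i^2$: the off-diagonal part contributes
\begin{equation*}
	\frac{4}{L_n}\sum_{i<j}D_i^3D_j^3=\frac{2}{L_n}\Bigl[\Bigl(\sum_i D_i^3\Bigr)^2-\sum_i D_i^6\Bigr],
\end{equation*}
while the self-loop diagonal contributes $\sum_i D_i^4\Varn{X_{ii}}\approx\sum_i D_i^6/L_n$, so that $n\Varn{T_n}/M_n^2$ behaves as $\bigl(2-\sum_i D_i^6/(\sum_i D_i^3)^2\bigr)/\mu$. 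Because $\sum_i D_i^3$ and $\sum_i D_i^6$ lie in domains of attraction of stable laws with indices $\gamma/3,\gamma/6<1$, both sums are dominated by their largest terms and the ratio $\sum_i D_i^6/(\sum_i D_i^3)^2$ does \emph{not} vanish: it converges in distribution to $\mathcal{S}_{\gamma/6}/\mathcal{S}_{\gamma/3}^2\in(0,1)$, which is exactly how the paper obtains $n\Varn{r_n}\dlim (2-\mathcal{S}_{\gamma/6}/\mathcal{S}_{\gamma/3}^2)/\mu$. Heavy tails thus make the diagonal term macroscopic and random --- the opposite of ``smaller order'' --- so the candidate $\sigma^2$ is neither $2/\mu$ nor deterministic, and the limit, if the conjecture holds, must be a genuine Gaussian variance-mixture. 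Your closing hedge (whether $\sigma^2$ is $2/\mu$ or retains randomness) treats as open a question the paper already settles at the level of the variance; the three-distinct-index covariances you worry about via $\sum_i D_i^5$ are indeed negligible (the paper verifies this), but the term you dropped, $\sum_i D_i^6$, is the one that survives. The remaining step you correctly flag as open --- concentration of the predictable quadratic variation around $\Varn{T_n}$ under non-self-averaging weights --- is precisely why the paper states this as a conjecture rather than a theorem.
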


The intuition behind this conjecture is explained in Section~\ref{sec:int_conj}. 
Although we do not have a proof of this scaling limit of $r_n$ in the configuration model, the following result that shows that at least $\sqrt{n}r_n$ is a tight sequence of random variables:

\begin{lemma}[Convergence of $n {\rm Var}_n(r_n)$ for  \texttt{CM}]
\label{lem-Pearon-CM-Var}
As $n\rightarrow \infty$, with ${\rm Var}_n$ denoting the conditional variance given the i.i.d.\ degrees,
	\eqn{
		n{\rm Var}_n(r_n)\dlim \frac{2-\mathcal{S}_{\gamma/6}/\mathcal{S}_{\gamma/3}^2}{\mu}.
	}
\end{lemma}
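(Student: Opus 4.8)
The plan is to condition on the degree sequence ${\bf D}_n$ and to exploit that, given the degrees, the denominator of $r_n$ in~\eqref{eq:pearson} is deterministic. Writing $S_k:=\sum_{i\in[n]}D_i^k$ and $N_n:=\sum_{i,j\in[n]}X_{ij}D_iD_j$, only $N_n$ is random given ${\bf D}_n$, so $\Varn{r_n}=\Varn{N_n}/\mathrm{Den}_n^2$ with $\mathrm{Den}_n=S_3-S_2^2/L_n$. For $1<\gamma<2$ the exponents satisfy $3/\gamma>4/\gamma-1$, hence $S_3\gg S_2^2/L_n$ and $\mathrm{Den}_n^2=S_3^2(1+\op(1))$. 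The problem thus reduces to the conditional variance of $N_n$, and the target scale is $n\,\Varn{r_n}=\bigOp{1}$, i.e.\ $\Varn{N_n}$ of order $n^{6/\gamma-1}$.

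Next I would represent $N_n$ through the uniform random pairing $M$ of the $L_n$ half-edges: labelling half-edge $s$ by the degree $a_s:=D_{v(s)}$ of the vertex $v(s)$ to which it belongs, one has $N_n=2\sum_{\{s,t\}\in M}a_sa_t$. Using the two- and four-point matching probabilities $\Prob{\{s,t\}\in M}=1/(L_n-1)$ and $\Prob{\{s,t\},\{u,v\}\in M}=1/\big((L_n-1)(L_n-3)\big)$ for disjoint pairs (and $0$ when the pairs share one half-edge), $\Varn{N_n}$ expands into sums of $p_k:=\sum_s a_s^k=S_{k+1}$. The crucial observation is that, once divided by the relevant power of $L_n$, the \emph{only} combinations reaching the order $n^{6/\gamma-1}$ are $S_3^2/L_n$ and $S_6/L_n$; every other term ($S_2^2S_3/L_n^2\sim n^{7/\gamma-2}$, $S_2^4/L_n^3\sim n^{8/\gamma-3}$, $S_5/L_n$, etc.) is of strictly smaller order since $7/\gamma-2<6/\gamma-1$ and $8/\gamma-3<6/\gamma-1$ for $\gamma\in(1,2)$. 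Separating half-edge pairs at distinct vertices from those at a common vertex (the latter being exactly the self-loop pairs) splits $\sum_{\{s,t\}}a_s^2a_t^2=\tfrac12(S_3^2-S_6)+\tfrac12(S_6-S_5)$, and a careful accounting of these diagonal and off-diagonal contributions yields $\Varn{N_n}=\frac{1}{L_n}\big(2S_3^2-S_6\big)\big(1+\op(1)\big)$.

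Combining the two steps gives $n\,\Varn{r_n}=\frac{n}{L_n}\,\frac{2S_3^2-S_6}{S_3^2}\big(1+\op(1)\big)=\frac{n}{L_n}\Big(2-\frac{S_6}{S_3^2}\Big)\big(1+\op(1)\big)$. It then remains to pass to the joint limit. By the LePage/order-statistics representation of a regularly-varying i.i.d.\ sample there is a sequence $b_n$, regularly varying with index $1/\gamma$, such that $\big(S_3/b_n^3,\,S_6/b_n^6\big)\dlim(\mathcal S_{\gamma/3},\mathcal S_{\gamma/6})$ jointly, with the \emph{same} $(\Gamma_i)$ of~\eqref{eq:def_Gamma} driving both coordinates; the slowly-varying normalisation cancels in the ratio, so $S_6/S_3^2\dlim\mathcal S_{\gamma/6}/\mathcal S_{\gamma/3}^2$. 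Since $L_n/n\plim\mu$ and $\mathcal S_{\gamma/3}>0$ almost surely (so the ratio map is continuous at the limit point), Slutsky's theorem and the continuous mapping theorem give $n\,\Varn{r_n}\dlim\big(2-\mathcal S_{\gamma/6}/\mathcal S_{\gamma/3}^2\big)/\mu$, as claimed.

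The hard part is the second step: the combinatorial bookkeeping that isolates precisely the $S_3^2$ and $S_6$ terms and shows that all remaining cross terms are genuinely $\op$ of the $n^{6/\gamma-1}$ scale, \emph{uniformly} enough to survive division by the random quantity $S_3^2$. In particular the self-loop (diagonal) pairings must be treated with care, as they contribute at the same order $n^{6/\gamma}$ as the leading off-diagonal term and are exactly what prevents $S_3^2$ from cancelling completely; this is the mechanism that renders the limit a genuinely random variable rather than the constant $2/\mu$. A secondary task is the joint convergence of $(S_3,S_6)$ together with the negligibility of the error, for which the regularly-varying tail~\eqref{eq:distribution_degrees} and the representation $\mathcal S_{\gamma/p}=\sum_i\Gamma_i^{-p/\gamma}$ are used. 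As a consistency check, $\sum_i D_i^6\le(\sum_i D_i^3)^2$ forces $\mathcal S_{\gamma/6}/\mathcal S_{\gamma/3}^2\in(0,1]$, so the limit lies in $[1/\mu,2/\mu)$; in particular $n\,\Varn{r_n}$ is tight, which is the tightness of $\sqrt n\,r_n$ anticipated by Conjecture~\ref{conj-Pearson-CM}.
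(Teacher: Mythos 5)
Your overall architecture is the same as the paper's: reduce to the conditional variance of the numerator $N_n=\sum_{i,j}X_{ij}D_iD_j$ after showing $\mathrm{Den}_n=S_3(1+\op(1))$, expand that variance via the two- and four-point pairing probabilities, check that the cross terms ($S_2^2S_3/L_n^2$, $S_2^4/L_n^3$, \dots) are below the scale $n^{6/\gamma-1}$ (these estimates match the paper's cases $|\{i,j,k,l\}|=3,4$), and finish with joint stable convergence of $(S_3,S_6)$ plus Slutsky, exactly as the paper does via~\eqref{eq:joint_convegrence_degree_sums}. But the decisive step is asserted, not proved, and your own setup contradicts it. In the representation $N_n=2\sum_{\{s,t\}\in M}a_sa_t$, \emph{every} unordered pair of half-edges --- whether at two distinct vertices or at a common vertex --- enters with the same weight $2a_sa_t$ and the same indicator variance $\tfrac{1}{L_n-1}\bigl(1-\tfrac{1}{L_n-1}\bigr)$. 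Hence the diagonal contribution to $\Varn{N_n}$ is
\begin{equation*}
 4\cdot\frac{1}{L_n}\sum_{\{s,t\}}a_s^2a_t^2\,(1+o(1))
 =\frac{2}{L_n}\left(S_3^2-S_5\right)(1+o(1))
 =\frac{2S_3^2}{L_n}(1+\op(1)),
\end{equation*}
because in your own identity $\sum_{\{s,t\}}a_s^2a_t^2=\tfrac12(S_3^2-S_6)+\tfrac12(S_6-S_5)$ the $S_6$ pieces \emph{cancel} between the distinct-vertex part and the self-loop part. So the half-edge route, carried out faithfully, yields $\Varn{N_n}=2S_3^2/L_n(1+\op(1))$ and the deterministic limit $2/\mu$ --- precisely the naive answer discussed in Section~\ref{sec:int_conj} --- not the claimed $(2S_3^2-S_6)/L_n$. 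Your stated mechanism (``the self-loop pairings are exactly what prevents $S_3^2$ from cancelling completely'') is therefore not what happens in your expansion: there the self-loop diagonal terms add $\tfrac12(S_6-S_5)/L_n$ with the \emph{same} sign and weight as everything else.

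The paper obtains the $-S_6$ by a different mechanism, working at the vertex level: the blocks $(i,j)=(k,l)$ with $i<j$ carry multiplicity $4$ in the double sum over ordered pairs (since $X_{ij}=X_{ji}$), giving $4\sum_{i<j}D_i^3D_j^3/L_n=2(S_3^2-S_6)/L_n$, while the self-loop block $i=j=k=l$ appears with multiplicity $1$ and with $\Varn{X_{ii}}$ evaluated as $D_i^2/L_n$, restoring $+S_6/L_n$. This asymmetry between off-diagonal multiplicity $4$ and diagonal multiplicity $1$ is exactly the subtlety the paper flags at the end of Section~\ref{sec:int_conj}, and your sketch contains no counterpart of it; nor does it fix the self-loop convention (the definition $X_{ii}=2\ell_i$, consistent with $\Expn{X_{ii}}=D_i(D_i-1)/(L_n-1)$) and track the resulting weight of self-loop pairs through the variance. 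Until the half-edge bookkeeping and the vertex-level bookkeeping are reconciled on this point, the coefficient of $S_6$ --- the sole source of randomness in the limit, distinguishing $\bigl(2-\mathcal{S}_{\gamma/6}/\mathcal{S}_{\gamma/3}^2\bigr)/\mu$ from the constant $2/\mu$ --- is unproven, and your sketch as written actually argues for the wrong (constant) limit. This is a genuine gap, located exactly at the step you yourself identify as ``the hard part.''
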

	
\color{black}
\bigskip
We now present our results for the clustering coefficient. The following theorem gives a central limit theorem for the clustering coefficient in the configuration model:

\begin{theorem}[Clustering in the \texttt{CM}]\label{thm:CCM}
Let ${\bf D}_n$ be sampled from $\mathscr{D}$ with $1< \gamma < 2$ and $\Exp{\mathscr{D}} = \mu$. Then, if $G_n = \emph{\texttt{CM}}({\bf D}_n)$, there exists a slowly-varying function $\mathcal{L}_2$ such that
\begin{equation}\label{eq:ccm}
\frac{ C_n}{\mathcal{L}_2(n)n^{4/\gamma-3}}\dlim \frac{1}{\mu^3}\left(\tilde{C}_{\gamma/2}\mathcal{S}_{\gamma/2}^2-3\tilde{C}_{\gamma/4}\mathcal{S}_{\gamma/4}+\frac{2\tilde{C}_{\gamma/6}\mathcal{S}_{\gamma/6}}{\tilde{C}_{\gamma/2}\mathcal{S}_{\gamma/2}}\right),
\end{equation}
where $\mathcal{S}_{\gamma/2}$, $\mathcal{S}_{\gamma/4}$ and $\mathcal{S}_{\gamma/6}$ are given by \eqref{eq:def_S_gamma} and
\[
\tilde{C}_\alpha=\Big(\frac{1-\alpha}{\Gamma(2-\alpha)\cos(\pi\alpha/2)}\Big)^\alpha,
\]
with $\Gamma$ the Gamma-function.
\end{theorem}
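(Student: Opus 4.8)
The plan is to work conditionally on the degree sequence $\mathbf{D}_n$ and to treat the numerator $6\triangle_n$ and the denominator $\sum_{i\in[n]}D_i(D_i-1)$ of \eqref{eq:clust} separately, since the denominator is a deterministic functional of the degrees. The whole problem then reduces to the triangle count $\triangle_n=\sum_{1\le i<j<k\le n}X_{ij}X_{jk}X_{ik}$ and to the joint behaviour of the degree power-sums $\sum_i D_i^2,\sum_i D_i^4,\sum_i D_i^6$. First I would compute the conditional mean $\Expn{\triangle_n}$ exactly: in the configuration model the probability that three prescribed disjoint pairs of half-edges are simultaneously matched equals $1/[(L_n-1)(L_n-3)(L_n-5)]$, so that $\Expn{\triangle_n}=\sum_{i<j<k}D_i(D_i-1)D_j(D_j-1)D_k(D_k-1)/[(L_n-1)(L_n-3)(L_n-5)]$. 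Applying the symmetric-function identity $6\sum_{i<j<k}a_ia_ja_k=(\sum_i a_i)^3-3(\sum_i a_i)(\sum_i a_i^2)+2\sum_i a_i^3$ with $a_i=D_i(D_i-1)$, and replacing $D_i(D_i-1)$ by $D_i^2$ up to a lower-order error, rewrites $6\Expn{\triangle_n}$ as $L_n^{-3}\big[(\sum_i D_i^2)^3-3(\sum_i D_i^2)(\sum_i D_i^4)+2\sum_i D_i^6\big](1+o(1))$, which already exhibits the three-term structure of \eqref{eq:ccm}.

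The second step is a concentration estimate showing $\triangle_n=\Expn{\triangle_n}(1+\op(1))$. The dominant contribution comes from triples of hub vertices, for which the edge multiplicities satisfy $X_{ij}\approx D_iD_j/L_n\to\infty$ and hence have vanishing relative fluctuations; so I would bound $\Varn{\triangle_n}$ by a second-moment computation in which the diagonal terms and the covariances between triangles sharing one or two edges are each shown to be of smaller order than $\Expn{\triangle_n}^2$ for every fixed $\gamma\in(1,2)$, using $\sum_i D_i^p$ of order $n^{p/\gamma}$ with $p/\gamma>1$. Care is needed here because multiple edges between hubs contribute genuinely, so $X_{ij}$ cannot be replaced by an indicator, and because a single top triple already contributes a constant fraction of $\triangle_n$; the mechanism that saves concentration is that the relative fluctuation of each factor is $\bigOp{(D_iD_j/L_n)^{-1/2}}\to0$.

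The final step is the joint stable limit. Writing $a_n$ for the $(1-1/n)$-quantile of $\mathscr{D}$, the ordered degrees satisfy $D_{(j)}/a_n\to\Gamma_j^{-1/\gamma}$, so that $a_n^{-p}\sum_i D_i^p\dlim\mathcal{S}_{\gamma/p}$ holds jointly for $p=2,4,6$, driven by the same Poisson arrivals $(\Gamma_j)_{j\ge1}$; this step produces the slowly-varying function $\mathcal{L}_2$ (a power of the slowly-varying function in \eqref{eq:distribution_degrees}), the factor $\mu^{-3}$ coming from $L_n/n\to\mu$, and it is the natural place where the sharp normalisation constants $\tilde C_{\gamma/p}$ of the heavy-tailed power-sums enter. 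Dividing the numerator by $\sum_i D_i(D_i-1)\sim\sum_i D_i^2\sim a_n^2\mathcal{S}_{\gamma/2}$ and applying the continuous-mapping theorem to the jointly convergent vector $(\sum_i D_i^2,\sum_i D_i^4,\sum_i D_i^6,L_n)$ then yields \eqref{eq:ccm}.

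I expect the main obstacle to be the concentration step in tandem with the exact constant bookkeeping. One must show that the conditional variance of $\triangle_n$ is negligible uniformly across all degree scales, controlling in particular the regime of intermediate-degree vertices where the Poissonian nature of the edges is felt; and one must simultaneously track the precise multiplicative constants $\tilde C_{\gamma/p}$ rather than merely the order of magnitude, since it is exactly their interplay in the three terms of \eqref{eq:ccm}, together with the shared randomness of the $(\Gamma_j)$, that pins down the limiting random variable. Getting the order right is routine; getting the constants right is the delicate part.
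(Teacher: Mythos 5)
Your proposal is correct and follows essentially the same route as the paper: the exact conditional mean $\Expn{\triangle_n}=|\mathcal{I}|/[(L_n-1)(L_n-3)(L_n-5)]$ combined with the symmetric-function identity for $6\sum_{i<j<k}a_ia_ja_k$ with $a_i=D_i(D_i-1)$, a Chebyshev/second-moment concentration bound controlling covariances of triangle pairs sharing one or two edges (the paper's Lemma \ref{lem:exvar}, following \cite[Proposition 7.13]{VanDerHofstad2016a}), and finally the joint stable-law convergence of $\left(\sum_i D_i^2,\sum_i D_i^4,\sum_i D_i^6\right)$ driven by the common Poisson arrivals $(\Gamma_j)_{j\ge 1}$, with the constants $\tilde{C}_{\gamma/p}$ entering through the normalizations of the one-dimensional stable CLTs exactly as in \eqref{eq:jointconv2} via \cite[Eq.\ (5.23)]{whitt2006}. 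The only cosmetic difference is that you phrase the joint convergence through order statistics and the quantile sequence $a_n$ rather than citing \cite[Theorem 2.33]{VanDerHofstad2016a} directly, which is the same underlying mechanism.
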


\paragraph*{Infinite clustering.}
For $\gamma<4/3$, Theorem~\ref{thm:CCM} shows that $C_n$ tends to infinity. This observation shows that the global clustering coefficient may give nonphysical behavior when used on multi-graphs. In multi-graphs, several edges may close a triangle. In this case, the interpretation of the clustering coefficient as a fraction of connected triples does not hold. Rather, the clustering coefficient can be interpreted as the average number of closed triangles per wedge, where different wedges and triangles may involve the same nodes but have different edges between them. This interpretation shows that indeed in a multi-graph the clustering coefficient may go to infinity.

\paragraph*{What is a triangle?}
The result in Theorem~\ref{thm:CCM} depends on what we consider to be a triangle. In general, one can think of a triangle as a loop of length three. In the configuration model however, self-loops and multiple edges may be present. Then for example three self-loops at the same vertex also form a loop of length three. Similarly, a multiple edge between vertices $v$ and $w$ together with a self-loop at vertex $w$ can also form a loop of length three. In Theorem~\ref{thm:CCM}, we do not consider these cases as triangles. Excluding these types of ``triangles" gives the terms $\mathcal{S}_{\gamma/4}$ and $\mathcal{S}_{\gamma/6}/\mathcal{S}_{\gamma/2}$ in Theorem~\ref{thm:CCM}.
\\\\
To obtain the precise asymptotic behavior of the clustering coefficient in the erased configuration model, we need an extra assumption on the degree distribution~\eqref{eq:distribution_degrees}.
\begin{assumption}\label{ass:F}
The degree distribution \eqref{eq:distribution_degrees} satisfies for all $x\in\{1, 2, \dots \}$ and for some $K>0$
\[
	\Prob{\mathcal{D}=x}\leq K \mathcal{L}(x) x^{-\gamma-1}.
\]
\end{assumption} 

Note that for all $t \ge 2$
\[
	\Prob{\mathcal{D} = t} = \Prob{\mathcal{D} > t - 1} - \Prob{\mathcal{D} > t} = \mathcal{L}(t-1) (t-1)^{-\gamma} - \mathcal{L}(t) t^{-\gamma},
\] 
Hence, since $(t - 1)^{-\gamma} - t^{-\gamma} \sim \gamma t^{-\gamma - 1}$ as $t \to \infty$, it follows that Assumption \ref{ass:F} is satisfied whenever the slowly-varying function $\mathcal{L}(t)$ is monotonic increasing for all $t$ greater than some $T$.

\begin{theorem}[Clustering in the \texttt{ECM}]\label{thm:CECM}
Let ${\bf D}_n$ be sampled from $\mathscr{D}$, satisfying Assumption \ref{ass:F}, with $1 < \gamma < 2$ and $\Exp{\mathscr{D}} = \mu$. Then, if $G_n = \emph{\texttt{ECM}}({\bf D}_n)$, there exists a slowly-varying function $\mathcal{L}_3$ such that
\[
	\frac{\mathcal{L}_3(n)  \widehat{C}_n}{\mathcal{L}(\sqrt{\mu n})^3n^{(-3\gamma^2+6\gamma-4)/(2\gamma)}}\dlim \mu^{-\frac 32 \gamma }\frac{A_\gamma}{\mathcal{S}_{\gamma/2}},
\]
where $\mathcal{S}_{\gamma/2}$ is a stable random variable defined in \eqref{eq:def_S_gamma}, and 
\begin{equation}\label{eq:Agamma}
A_{\gamma}=\int_{0}^{\infty}\int_{0}^{\infty}\int_{0}^{\infty}\frac{1}{(xyz)^{\gamma+1}}(1-\me^{-xy})(1-\me^{-xz})(1-\me^{-yz})\dd x\dd y\dd z<\infty.
\end{equation}
\end{theorem}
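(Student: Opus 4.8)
The plan is to analyse the numerator $6\widehat{\triangle}_n$ and the denominator $\sum_{i\in[n]}\widehat{D}_i(\widehat{D}_i-1)$ of $\widehat{C}_n$ in \eqref{eq:clust} separately, and to combine them through Slutsky's theorem. The structural observation driving everything is that the two are dominated by vertices on \emph{different} degree scales. The denominator is, to leading order, $\sum_i D_i^2$, which is governed by the few largest degrees, of order $n^{1/\gamma}$; it therefore does not concentrate but converges to the stable variable $\mathcal{S}_{\gamma/2}$. The numerator is instead dominated by vertices of degree of order $\sqrt{\mu n}$: a vertex of much larger degree connects to essentially every such vertex already, so increasing its degree produces no additional triangles. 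Since there are of order $n^{1-\gamma/2}\to\infty$ vertices at this scale, a law of large numbers applies and the rescaled triangle count concentrates on a deterministic constant, which will be a multiple of the integral $A_\gamma$ in \eqref{eq:Agamma}. This asymmetry is exactly what yields a deterministic $A_\gamma$ in the numerator of the limit and a random $\mathcal{S}_{\gamma/2}$ in its denominator.

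For the denominator I would first invoke Theorem~\ref{thm:scaling_erased_edges} to control the erased half-edges. Using that the number of edges between $i$ and $j$ in the \texttt{CM} is asymptotically Poisson with mean $D_iD_j/L_n$, so that $i,j$ are adjacent in the \texttt{ECM} with probability $\sim 1-\me^{-D_iD_j/L_n}$, a direct computation shows that the expected number of erased half-edges at a vertex of degree $D_i\sim n^{1/\gamma}$ is of order $n^{2-\gamma}$. Since $2-\gamma<1/\gamma$ for all $\gamma\in(1,2)$, this is negligible relative to $D_i$, so that $\sum_{i\in[n]}(D_i-\widehat{D}_i)D_i=\op(n^{2/\gamma})$ and hence $\sum_{i\in[n]}\widehat{D}_i(\widehat{D}_i-1)=\sum_{i\in[n]}D_i^2(1+\op(1))$. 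As $\sum_i D_i^2$ is a sum of i.i.d.\ regularly-varying variables with tail index $\gamma/2\in(1/2,1)$, classical stable-limit theory gives a norming sequence $b_n=n^{2/\gamma}\ell_0(n)$, with $\ell_0$ slowly varying, such that $b_n^{-1}\sum_{i\in[n]}\widehat{D}_i(\widehat{D}_i-1)\dlim\mathcal{S}_{\gamma/2}$.

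For the numerator I would work conditionally on the degrees, writing $\widehat{\triangle}_n=\sum_{i<j<k}X_{ij}X_{jk}X_{ik}$ with $X_{\bullet}\in\{0,1\}$. One first shows that the probability that $i,j,k$ form a triangle factorises asymptotically as $(1-\me^{-D_iD_j/L_n})(1-\me^{-D_jD_k/L_n})(1-\me^{-D_iD_k/L_n})$, which is justified because the dominant triples have all three degree products of order $L_n$, so each edge is present with a nondegenerate, asymptotically independent probability. Rescaling degrees by $\sqrt{\mu n}$ (so that $D_iD_j/L_n$ becomes a product of $O(1)$ variables) and approximating the empirical degree measure by the density $\gamma\mathcal{L}(t)t^{-\gamma-1}$ turns $\Expn{\widehat{\triangle}_n}$ into a Riemann sum converging to $\tfrac16\gamma^3\mathcal{L}(\sqrt{\mu n})^3\mu^{-3\gamma/2}n^{3-3\gamma/2}A_\gamma$; here Assumption~\ref{ass:F} bounds the point masses so that the discrete sum is faithfully approximated by the integral and the tails of the triple sum are integrable, i.e.\ $A_\gamma<\infty$. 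A second-moment estimate on $\widehat{\triangle}_n$ then gives $6\widehat{\triangle}_n\big/\big(\mathcal{L}(\sqrt{\mu n})^3n^{3-3\gamma/2}\big)\plim\gamma^3\mu^{-3\gamma/2}A_\gamma$.

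The main obstacle is this last concentration step. Conditionally on the degrees the three edge-indicators of a triangle are \emph{not} independent, since in the \texttt{ECM} all edges arise from the same stub-pairing, and one must show that these dependencies, together with those between triangles sharing an edge or a vertex, contribute only at lower order to the variance; the delicate point is that the variance is controlled by intermediate scales and one must rule out a nonnegligible contribution from the largest degrees, where the factorisation of the connection probabilities degenerates. Once the numerator satisfies $6\widehat{\triangle}_n\big/\big(\mathcal{L}(\sqrt{\mu n})^3n^{3-3\gamma/2}\big)\plim\gamma^3\mu^{-3\gamma/2}A_\gamma$ and the denominator satisfies $b_n^{-1}\sum_{i\in[n]}\widehat{D}_i(\widehat{D}_i-1)\dlim\mathcal{S}_{\gamma/2}$, Slutsky's theorem applies because the numerator limit is deterministic: using $3-\tfrac{3\gamma}{2}-\tfrac{2}{\gamma}=(-3\gamma^2+6\gamma-4)/(2\gamma)$, the factor $\mathcal{L}(\sqrt{\mu n})^3$ cancels, and absorbing $\gamma^3$ and $\ell_0$ into $\mathcal{L}_3(n)=\ell_0(n)/\gamma^3$ yields
\[
\frac{\mathcal{L}_3(n)\,\widehat{C}_n}{\mathcal{L}(\sqrt{\mu n})^3\,n^{(-3\gamma^2+6\gamma-4)/(2\gamma)}}\dlim\mu^{-\frac32\gamma}\frac{A_\gamma}{\mathcal{S}_{\gamma/2}}.
\]
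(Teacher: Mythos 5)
Your outline reproduces the architecture of the paper's proof — split $\widehat{C}_n$ into the triangle count and $\sum_i \widehat{D}_i(\widehat{D}_i-1)$, replace $\widehat{D}_i$ by $D_i$ via the erased-edges bound (your exponent bookkeeping $1/\gamma + 2 - \gamma < 2/\gamma$ is exactly how Corollary~\ref{cor:scaling_degrees_erased_edges} is used), factorize the conditional triangle probability into $\prod(1-\me^{-D_uD_v/L_n})$, pass from the empirical degree measure to the limiting integral $A_\gamma$, and finish with a stable limit for the denominator and Slutsky. But there is a genuine gap at precisely the step you yourself call the main obstacle: you invoke ``a second-moment estimate on $\widehat{\triangle}_n$'' for the \emph{full} triangle count without any mechanism, and as stated this step would not go through. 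Both ingredients degenerate without a cutoff: the factorization $\Probn{\triangle_{i,j,k}} = (1+o(1))\prod(1-\me^{-D_uD_v/L_n})$ rests on conditional edge-probability asymptotics in the \texttt{CM} that are only valid when the degrees involved are $O(\sqrt{n})$, and a variance bound over all triples has to contend with strongly correlated contributions from hubs of degree up to $n^{1/\gamma} \gg \sqrt{n}$, where the error terms in the factorization are not uniform. Saying the tails are ``integrable, i.e.\ $A_\gamma < \infty$'' controls the limiting integral, not the pre-limit sum.

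The paper's resolving device, absent from your plan, is the truncation to the window $B_n(\varepsilon) = [\varepsilon\sqrt{\mu n}, \sqrt{\mu n}/\varepsilon]$, used with the order of limits $n\to\infty$ first, $\varepsilon \downarrow 0$ last. Outside the window one never proves concentration at all: a first-moment Markov bound (Lemma~\ref{lem:sqrt}, which is where Assumption~\ref{ass:F} actually enters, to dominate the discrete triple sum by the integral) shows $\triangle_n(\bar{B}_n(\varepsilon)) = \bigOp{\mathcal{E}_1(\varepsilon)\mathcal{L}(\sqrt{\mu n})^3 n^{\frac32(2-\gamma)}}$ with $\mathcal{E}_1(\varepsilon)\to 0$. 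Inside the window everything you need becomes uniform: the factorization holds with uniform $(1+o(1))$ errors (Lemma~\ref{lem:extriang}); the number of participating vertices is $\bigOp{n^{\frac12(2-\gamma)+\delta}}$, so the covariance terms from pairs of triples sharing one, two or three vertices are bounded crudely by powers of this count and are $o(n^{6-3\gamma})$; and the six-distinct-vertex term is $\op(1)$ times the squared mean because $g_{n,\varepsilon}$ is bounded between $\varepsilon$-dependent constants (Lemma~\ref{lem:exvarECM}). The compact window $[\varepsilon,1/\varepsilon]^3$ is likewise what legitimizes your Riemann-sum step: the paper Taylor-expands the bounded continuous integrand and applies weak convergence of the rescaled empirical measure on a compact set (Lemma~\ref{lem:extriangconv}), which would fail on the full octant. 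With the truncation inserted, the rest of your argument — including absorbing $\gamma^3$ and the stable norming function into $\mathcal{L}_3$, and the exponent identity $3-\tfrac{3\gamma}{2}-\tfrac{2}{\gamma} = (-3\gamma^2+6\gamma-4)/(2\gamma)$ — matches the paper's proof.
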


We now investigate the behavior of the clustering coefficient in rank-1 inhomogeneous random graphs:
\begin{theorem}[Clustering in the rank-1 inhomogeneous random graph]\label{thm:CHVM}
Let ${\bf W}_n$ be sampled from $\mathscr{D}$, satisfying Assumption \ref{ass:F}, with $1 < \gamma < 2$ and $\Exp{\mathscr{D}} = \mu$. Then, if $G_n$ is an inhomogeneous random graph with weights ${\bf W}_n$ and connection probabilities satisfying Condition~\ref{cond:conhvm}, there exists a slowly-varying function $\mathcal{L}_3$ such that
\[
\frac{\mathcal{L}_3(n) C(G_n)}{\mathcal{L}(\sqrt{\mu n})^3n^{(-3\gamma^2+6\gamma-4)/(2\gamma)}}\dlim \mu^{-\frac 32 \gamma }\frac{1}{\mathcal{S}_{\gamma/2}}\int_{0}^{\infty} \hspace{-5pt} \int_{0}^{\infty} \hspace{-5pt} \int_{0}^{\infty} \frac{q(xy)q(xz)q(yz)}{(xyz)^{\gamma+1}}\dd x\dd y\dd z,
\]
where $q$ is as in Condition~\ref{cond:conhvm}(ii), $\mathcal{S}_{\gamma/2}$ is a stable random variable defined in \eqref{eq:def_S_gamma}, and 
\[
\int_{0}^{\infty}\int_{0}^{\infty}\int_{0}^{\infty}\frac{1}{(xyz)^{\gamma+1}}q(xy)q(xz)q(yz)\dd x\dd y\dd z<\infty.
\]
\end{theorem}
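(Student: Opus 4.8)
The plan is to mirror the proof of Theorem~\ref{thm:CECM}, exploiting that in the rank-1 inhomogeneous random graph the edges are \emph{independent}, with $\Prob{X_{ij}=1\mid{\bf W}_n}=q(w_iw_j/(\mu n))$, so that the effective connection probability $q(w_iw_j/(\mu n))$ plays exactly the role that the erasure probability $1-\me^{-D_iD_j/L_n}$ plays in the \texttt{ECM} (this is why the special case $q(u)=1-\me^{-u}$ reproduces $A_\gamma$ from \eqref{eq:Agamma}). I would write $C(G_n)=6\triangle_n/\sum_{i\in[n]}D_i(D_i-1)$ and treat numerator and denominator separately, the key structural observation being that they are governed by \emph{well-separated weight scales}: the denominator is dominated by the finitely many hubs of weight of order $b_n\asymp n^{1/\gamma}$ and therefore retains fluctuations, whereas the numerator is dominated by the $\asymp\sqrt{\mu n}$ weight scale, where there are $\kappa_n\to\infty$ vertices, and therefore concentrates.

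For the denominator I would first establish degree concentration, $\sum_{i\in[n]}D_i(D_i-1)=(1+\op(1))\sum_{i\in[n]}w_i^2$, by verifying $\Exp{D_i\mid{\bf W}_n}=\sum_{j}q(w_iw_j/(\mu n))=(1+o(1))w_i$ even for the largest weights (one splits the sum at $w_j\asymp \mu n/w_i$ and uses $1<\gamma<2$, which makes $2-\gamma<1/\gamma$, to see the linear-in-$w_i$ part dominate), together with a variance bound that is immediate from the independence of edges. The stable-limit theorem for i.i.d.\ regularly-varying summands of index $\gamma/2\in(\tfrac12,1)$, in its LePage series form, then gives $b_n^{-2}\sum_i w_i^2\dlim\mathcal{S}_{\gamma/2}$, with $b_n$ the norming sequence $n\Prob{\mathscr D>b_n}\to1$.

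For the numerator I would compute the conditional mean $\Exp{\triangle_n\mid{\bf W}_n}=\tfrac16\sum_{i,j,k\text{ distinct}}q(\tfrac{w_iw_j}{\mu n})q(\tfrac{w_jw_k}{\mu n})q(\tfrac{w_iw_k}{\mu n})$, rescale the weights by $\sqrt{\mu n}$, and use that the empirical measure of the rescaled weights, normalised by $\kappa_n=n\Prob{\mathscr D>\sqrt{\mu n}}\asymp\mathcal{L}(\sqrt{\mu n})\mu^{-\gamma/2}n^{1-\gamma/2}$, converges to the deterministic intensity $\gamma x^{-\gamma-1}\dd x$; this turns the triple sum into an integral and yields $6\,\Exp{\triangle_n\mid{\bf W}_n}=(1+o(1))\,\gamma^3\kappa_n^3\int\int\int \tfrac{q(xy)q(xz)q(yz)}{(xyz)^{\gamma+1}}\dd x\dd y\dd z$. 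Finiteness of this integral follows from $q(u)\le(u\wedge1)$ (Condition~\ref{cond:conhvm}), reducing it to the estimate that gives $A_\gamma<\infty$: near the origin $q(u)\sim u$ makes the integrand $\asymp x^{1-\gamma}$, integrable since $\gamma<2$, and at infinity $q\le1$ leaves $(xyz)^{-\gamma-1}$, integrable since $\gamma+1>1$. A second-moment estimate, feasible precisely because edges are independent so that only triangles sharing an edge or a vertex correlate, shows $\triangle_n$ concentrates around its conditional mean, and one checks that the contribution of the extreme hubs to $\triangle_n$ is $O(\kappa_n^2)=o(\kappa_n^3)$, so that $\kappa_n^{-3}\,6\triangle_n\plim\gamma^3\int\int\int \tfrac{q(xy)q(xz)q(yz)}{(xyz)^{\gamma+1}}\dd x\dd y\dd z$, a deterministic constant.

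Finally I would combine the two estimates by Slutsky's theorem: since the rescaled numerator converges in probability to a constant while the rescaled denominator converges in distribution to $\mathcal{S}_{\gamma/2}$ (and no joint control is needed precisely because one limit is deterministic), the ratio yields the stated limit after collecting the deterministic prefactors; matching $\kappa_n^3/b_n^2$ produces the exponent $(-3\gamma^2+6\gamma-4)/(2\gamma)$, the factor $\mu^{-\frac32\gamma}$ and the factor $\mathcal{L}(\sqrt{\mu n})^3$, while the constant $\gamma^3$ and the slowly-varying part of $b_n^2$ are absorbed into $\mathcal{L}_3$. I expect the main obstacle to be the numerator's concentration together with the passage from the discrete triple sum to the integral: one must justify uniformly over scales that neither the bulk of small weights nor the finitely many hubs spoils the Riemann-sum limit, and it is precisely here that Assumption~\ref{ass:F} is used, exactly as in the proof of Theorem~\ref{thm:CECM}.
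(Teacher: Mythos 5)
Your proposal follows essentially the same route as the paper: the paper likewise reduces to the \texttt{ECM} argument by exploiting conditional edge independence ($\Probn{\triangle_{i,j,k}}=p(w_i,w_j)p(w_j,w_k)p(w_i,w_k)$), proves $\Expn{D_i}=w_i(1+o(1))$ and degree--weight concentration so that \eqref{eq:degmoments} transfers and the denominator yields $\mathcal{S}_{\gamma/2}$, re-uses Lemma~\ref{lem:sqrt} (valid since $q(u)\le(u\wedge 1)$) and Lemma~\ref{lem:exvarECM} (whose six-vertex term now vanishes exactly by independence), and converts the triple sum into $\int q(t_1t_2)q(t_1t_3)q(t_2t_3)\dd\lambda(t_1)\dd\lambda(t_2)\dd\lambda(t_3)$ via the empirical-measure convergence \eqref{eq:lambda} and \cite[Lemma 5]{hofstad2017c}, finishing by Slutsky just as you do. The only imprecision is your claim that the contribution outside the $\sqrt{n}$-window is $O(\kappa_n^2)=o(\kappa_n^3)$: as Lemma~\ref{lem:sqrt} shows, weights just outside $[\varepsilon\sqrt{\mu n},\sqrt{\mu n}/\varepsilon]$ contribute at the \emph{same} order $\mathcal{L}(\sqrt{\mu n})^3 n^{\frac{3}{2}(2-\gamma)}$ with a prefactor $\mathcal{E}_1(\varepsilon)\to 0$, which is precisely why the two-step limit ($n\to\infty$, then $\varepsilon\to 0$) that you set up is needed rather than a strictly-smaller-order bound.
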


\begin{figure}[htbp]
\centering
\includegraphics[width=0.5\textwidth]{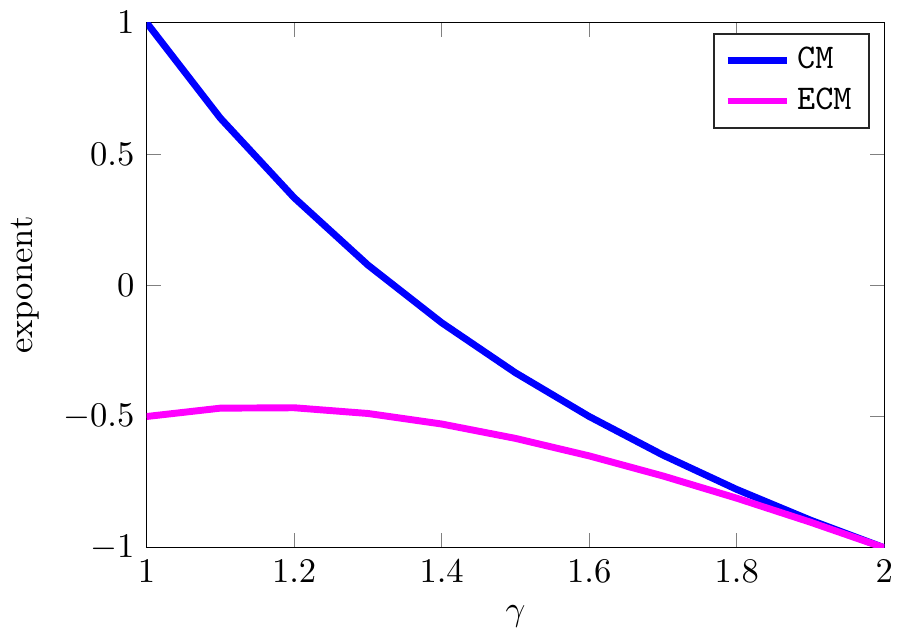}
\caption{Exponents of the clustering coefficient of $\texttt{CM}$ and $\texttt{ECM}$ for $\gamma\in(1,2)$}
\label{fig:exps}
\end{figure}

\paragraph*{Maximal clustering in the \texttt{ECM} and the inhomogeneous random graph.}
Figure~\ref{fig:exps} shows the exponents of $n$ in the main multiplicative term of the clustering coefficient, in the \texttt{CM} and the \texttt{ECM}. The exponent in Theorem~\ref{thm:CECM} is a  quadratic expression in $\gamma$, hence, there may be a value of $\gamma$ that maximizes the clustering coefficient. We set the derivative of the exponent equal to zero
\[
\frac{\dd}{\dd \gamma}(-3\gamma^2+6\gamma-4)/(2\gamma) = -3/2+2\gamma^{-2} =0,
\]
which solves to $\gamma = \sqrt{4/3}\approx 1.15$. Thus, the global clustering coefficient of an erased configuration model with $\gamma\in(1,2)$ is maximal for $\gamma\approx1.15$ where the scaling exponent of the clustering coefficient equals $-2\sqrt{3}+3\approx-0.46$. This maximal value arises from the trade off between the denominator and the numerator of the clustering coefficient in~\eqref{eq:clustdef}. When $\gamma$ becomes close to 1, there will be some vertices with very high degrees. This makes the denominator of~\eqref{eq:clustdef} very large. On the other hand, having more vertices of high degrees also causes the graph to contain more triangles. Thus, the numerator of~\eqref{eq:clustdef} also increases when $\gamma$ decreases. The above computation shows that in the erased configuration model, the optimal trade off between the number of triangles and the number of connected triples is attained at $\gamma\approx1.15$. Theorem~\ref{thm:CHVM} shows that the same phenomenon occurs in the rank-1 inhomogeneous random graph.

\paragraph*{Mean clustering in \texttt{CM} vs \texttt{ECM}.}
In the \texttt{CM}, the normalized clustering coefficient converges to a constant times a stable random variable squared. This stable random variable has an infinite mean, and therefore its square also has an infinite mean. In the \texttt{ECM} as well as in the rank-1 inhomogeneous random graph however, the normalized clustering coefficient converges to one divided by a stable random variable, which has a finite mean~\cite{pierce1997}. Thus, the rescaled clustering coefficient in the \texttt{ECM} and the rank-1 inhomogeneous random graph converges to a random variable with finite mean. Formally, $\Exp{\frac{ C_n}{n^{4/\gamma-3}}}=\infty$ and $\Exp{\frac{\widehat{C}_n}{n^{-3/2\gamma+3-2/\gamma}}}<\infty$.

\paragraph*{ECM and inhomogeneous random graphs.}
Theorems~\ref{thm:CECM} and~\ref{thm:CHVM} show that the clustering coefficient in the erased configuration model has the same scaling as the clustering coefficient in the rank-1 inhomogeneous random graph. In fact, choosing $q(u)=1-\me^{-u}$ in Condition~\ref{cond:conhvm} even gives the exact same behavior for clustering in the erased configuration model and in the inhomogeneous random graph. This shows that the erased configuration model behaves similarly as an inhomogeneous random graph with connection probabilities $p(w_i,w_j)=1-\me^{-w_iw_j/(\mu n)}$ in terms of clustering.

\paragraph*{Vertices of degrees $\sqrt{n}$.}
In the proof of Theorem~\ref{thm:CECM} we show that the main contribution to the number of triangles comes from vertices of degrees proportional to $\sqrt{n}$. Let us explain why this is the case. In the \texttt{ECM}, the probability that an edge exists between vertices $i$ and $j$ can be approximated by $1-\me^{-D_iD_j/L_n}$. Therefore, when $D_iD_j$ is proportional to $n$, the probability that an edge between $i$ and $j$ exists is bounded away from zero. Similarly, the probability that a triangle between vertices $i$,$j$ and $k$ exists is bounded away from zero as soon as $D_iD_j$, $D_iD_k$ and $D_jD_k$ are all proportional to $n$. This is indeed achieved when all three vertices have degrees proportional to $\sqrt{n}$. 
If, for example, vertex $i$ has degree of the order larger than $\sqrt{n}$, this means that vertices $j$ and $k$ can have degrees of the order smaller than $\sqrt{n}$ while $D_iD_j$ and $D_iD_k$ are still of order $n$. However, $D_jD_k$ also has to be of size $n$ for the probability of a triangle to be bounded away from zero. Now recall that the degrees follow a power-law distribution. Therefore, the probability that a vertex has degree much higher than $\sqrt{n}$ is much smaller than the probability that a vertex has degree of the order $\sqrt{n}$.  Thus, the most likely way for all three contributions to be proportional to $n$ is to have $D_i,D_j,D_k$ be proportional to $\sqrt{n}$.  Intuitively, this shows that the largest contribution to the number of triangles in the \texttt{ECM} comes from the vertices of degrees proportional to $\sqrt{n}$. This balancing of the number of vertices and the probability of forming a triangle also appears for other subgraphs~\cite{hofstad2017d}.

\paragraph*{Global and average clustering.}
Clustering can be measured by two different metrics: the global clustering coefficient and the average clustering coefficient~\cite{newman2001,watts1998}. In this paper, we study the global clustering coefficient, as defined in ~\eqref{eq:clustdef}. The average clustering coefficient is defined as the average over the local clustering coefficient of every vertex, where the local clustering coefficient of a vertex is the number of triangles the vertex participates in divided by the number of pairs of neighbors of the vertex. For the configuration model, the global clustering coefficient as well as the average clustering coefficient are known to scale as $n^{4/3-\gamma}$~\cite{newman2003book}. In particular, this shows that both clustering coefficients in the configuration model diverge when $\gamma<4/3$. Our main results, Theorems~\ref{thm:CCM} and~\ref{thm:CECM}, provide the exact limiting behavior of the global clustering coefficients for CM and ECM, respectively.

The average clustering coefficient in the rank-1 inhomogeneous random graph has been shown to scale as $n^{1-\gamma}\log(n)$~\cite{colomer2012,hofstad2017b}, which is very different from the scaling of the global clustering coefficient from Theorem~\ref{thm:CECM}. For example, the average clustering coefficient decreases in $\gamma$, whereas the global clustering coefficient first increases in $\gamma$, and then decreases in $\gamma$ (see Figure~\ref{fig:exps}). Furthermore, the average clustering coefficient decays only very slowly in $n$ as $\gamma$ approaches 1. The global clustering coefficient on the other hand decays as $n^{-1/2}$ when $\gamma$ approaches 1. This shows that the global clustering coefficient and the average clustering coefficient are two very different ways to characterize clustering.  

\paragraph{Joint convergence.}
Before we proceed with the proofs, we remark that each of the three limit theorems uses a coupling between the sum of different powers of degrees and the limit distributions $\mathcal{S}_{\gamma/p}$. It follows from the proofs of our main results, that these couplings hold simultaneously for all three measures. As a direct consequence, it follows that the rescaled measures convergence jointly in distribution:

\begin{theorem}[Joint convergence]\label{thm:sim_clt}
Let ${\bf D}_n$ be sampled from $\mathscr{D}$, satisfying Assumption \ref{ass:F}, with $1 < \gamma < 2$ and $\Exp{\mathscr{D}} = \mu$. Let $G_n = \emph{\texttt{CM}}({\bf D}_n)$, $\widehat{G}_n = \emph{\texttt{ECM}}({\bf D}_n)$ and define
$\alpha = (-3\gamma^2 + 6\gamma -4)/2\gamma$. Then there exist slowly-varying functions $\mathcal{L}_1$, $\mathcal{L}_2$ and $\mathcal{L}_3$, such that as $n \to \infty$,
\[
	\left(\hspace{-2pt}\mathcal{L}_1(n)n^{1 - \frac{1}{\gamma}} \, \widehat{r}_n, \, \frac{ C_n}{\mathcal{L}_2(n)n^{4/\gamma-3}}, \,
	\frac{\mathcal{L}_3(n)  \widehat{C}_n}{\mathcal{L}(\sqrt{\mu n})^3n^{\alpha}}\right) \dlim
	\left(\hspace{-2pt}-\frac{\mathcal{S}_{\gamma/2}^2}{\mu \mathcal{S}_{\gamma/3}}, \,
	\frac{\mathcal{S}_{\gamma/2}^2-\mathcal{S}_{\gamma/4}}{\mu^3}, \,
	\mu^{-\frac{3}{2} \gamma }\frac{A_\gamma}{\mathcal{S}_{\gamma/2}}\right)\hspace{-2pt},
\]
with $A_\gamma$ as in \eqref{eq:Agamma} and  $\mathcal{S}_{\gamma/2}$, $\mathcal{S}_{\gamma/3}$ and $\mathcal{S}_{\gamma/4}$  given by \eqref{eq:def_S_gamma}.
\end{theorem}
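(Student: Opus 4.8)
The plan is to deduce the joint convergence from the three marginal theorems by isolating the single source of randomness common to all of them, namely the joint convergence of the normalized power sums of the degrees to the stable variables $\mathcal{S}_{\gamma/p}$ of \eqref{eq:def_S_gamma}. Let $u_n$ be the normalizing sequence with $n\,\Prob{\mathscr{D}>u_n}\to1$, so that $u_n=\mathcal{L}^\ast(n)n^{1/\gamma}$ for a slowly varying $\mathcal{L}^\ast$. First I would recall the classical fact that the empirical point process of rescaled degrees $\sum_{i\in[n]}\delta_{D_i/u_n}$ converges in distribution to a Poisson point process on $(0,\infty)$ with intensity $\gamma x^{-\gamma-1}\dd x$, whose atoms admit the LePage representation $(\Gamma_i^{-1/\gamma})_{i\ge1}$. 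Since for every $p>\gamma$ the functional $\nu\mapsto\int x^p\,\nu(\dd x)$ is a.s.\ continuous at the limit (a standard truncation argument, cf.\ \cite{samorodnitsky1994}), this yields the \emph{joint} convergence
\[
	\Big(\tfrac{\sum_{i\in[n]}D_i^2}{u_n^2},\,\tfrac{\sum_{i\in[n]}D_i^3}{u_n^3},\,\tfrac{\sum_{i\in[n]}D_i^4}{u_n^4},\,\tfrac{\sum_{i\in[n]}D_i^6}{u_n^6}\Big)\dlim\big(\mathcal{S}_{\gamma/2},\mathcal{S}_{\gamma/3},\mathcal{S}_{\gamma/4},\mathcal{S}_{\gamma/6}\big),
\]
in which all four limits are functionals of the \emph{same} sequence $(\Gamma_i)_{i\ge1}$. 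This single convergence is exactly the coupling that already underlies the proofs of Theorems~\ref{thm:clt_pearson_ecm}, \ref{thm:CCM} and~\ref{thm:CECM}.

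The second step is to reread each marginal proof and record that it in fact represents the rescaled statistic as a fixed continuous function of these normalized power sums, up to a remainder that tends to zero in probability. Concretely, I would extract
\[
	\mathcal{L}_1(n)n^{1-\frac1\gamma}\,\widehat{r}_n=\Phi_1\Big(\tfrac{\sum_iD_i^2}{u_n^2},\tfrac{\sum_iD_i^3}{u_n^3}\Big)+\op(1),\qquad\frac{\mathcal{L}_3(n)\widehat{C}_n}{\mathcal{L}(\sqrt{\mu n})^3n^{\alpha}}=\Phi_3\Big(\tfrac{\sum_iD_i^2}{u_n^2}\Big)+\op(1),
\]
with $\Phi_1(s_2,s_3)=-s_2^2/(\mu s_3)$ and $\Phi_3(s_2)=\mu^{-\frac32\gamma}A_\gamma/s_2$, and likewise $C_n/(\mathcal{L}_2(n)n^{4/\gamma-3})=\Phi_2+\op(1)$ for the continuous function $\Phi_2$ of the power-sum subvector whose value at the limit is the second coordinate of the stated limit. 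The crucial content here, supplied by the individual proofs, is that conditionally on the degrees the remaining graph randomness (stub pairing for $C_n$, erasure for $\widehat{r}_n$ and $\widehat{C}_n$) concentrates, so that the conditional fluctuations are $\op(1)$ while the deterministic constants $\tilde{C}_\alpha$ and $A_\gamma$ persist as conditional means. Since each remainder is $\op(1)$ marginally, the three-dimensional remainder vector converges to $(0,0,0)$ in probability.

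It then remains only to assemble the pieces: the maps $\Phi_1,\Phi_2,\Phi_3$ are continuous on $\{s_2>0,\,s_3>0\}$, and the limit $(\mathcal{S}_{\gamma/2},\mathcal{S}_{\gamma/3},\mathcal{S}_{\gamma/4})$ lies there almost surely since each $\mathcal{S}_{\gamma/p}$ is a.s.\ finite and strictly positive, so the continuous-mapping theorem applied to the power-sum vector together with Slutsky's theorem to absorb the $\op(1)$ terms would give the stated three-dimensional limit. I expect the genuine work to lie not in this soft assembly but inside the marginal reductions, and the hardest point will be to verify that all three reductions use one and the same $u_n$ and one and the same point-process limit, so that the three occurrences of $\mathcal{S}_{\gamma/2}$ are literally the same random variable and the joint law is nondegenerate rather than a product of the marginals. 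The most delicate case will be $\widehat{C}_n$, where the dependence between the erasure and the large degrees must be controlled — via Theorem~\ref{thm:scaling_erased_edges} on the number of erased edges — uniformly enough that its conditional fluctuations vanish \emph{simultaneously} with those of $\widehat{r}_n$, since both are built from the same erased graph obtained from the same realization of $\texttt{CM}({\bf D}_n)$ that also produces $C_n$.
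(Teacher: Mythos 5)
Your proposal is correct and takes essentially the same route as the paper: the paper's own proof of Theorem~\ref{thm:sim_clt} is precisely the observation that each marginal proof reduces the rescaled statistic to a continuous function of the normalized power sums converging jointly via \eqref{eq:joint_convegrence_degree_sums} (all limits being functionals of the \emph{same} sequence $(\Gamma_i)_{i\ge1}$, by \cite[Theorem 2.33]{VanDerHofstad2016a}) plus an $\op(1)$ remainder, after which continuous mapping and Slutsky give the joint limit. Your identification of this shared coupling as the crux, and of the conditional concentration arguments as what makes the remainders vanish, is exactly the content of the paper's argument.
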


\section{Overview of the proofs}\label{sec:proof_overview}

Here we give an outline for the proofs of our main results for the configuration model and the erased configuration model and explain the main ideas that lead to them. Since the goal is to convey the high-level ideas, we limit technical details in this section and often write $f(n) \approx g(n)$ to indicate that $f(n)$ behaves roughly as $g(n)$. The formal definition and exact details of these statements can be found in Sections~\ref{sec:pearson} and~\ref{sec:clustering} where the proofs are given. The proofs for the rank-1 inhomogeneous random graphs follow very similar lines, and we show how the proofs for the erased configuration model extend to rank-1 inhomogeneous random graphs satisfying Condition~\ref{cond:conhvm} in Section~\ref{sec:proof_hvm}. We start with some results on the number of removed edges in the erased configuration model.


\subsection{The number of removed edges}
\label{sec:erased_edges}


The number of removed edges $Z_n$ in the erased configuration model is given by
\[
	Z_n = \sum_{i = 1}^n X_{ii} + \sum_{1 \le i < j \le n}\left( X_{ij} - \ind{X_{ij} > 0}\right),
\]
where $X_{ij}$ again denotes the number of edges between vertices $i$ and $j$.
For the analysis of the \texttt{ECM} it is important to understand the behavior of this number. In particular we are interested in the scaling of $Z_n$ with respect to $n$. Here we give an asymptotic upper bound, which implies that, up to sub-linear terms, $Z_n$ scales no faster than $n^{2 - \gamma}$. The proof can be found in Section \ref{ssec:proof_erased_edges}.

\begin{theorem}\label{thm:scaling_erased_edges}
Let ${\bf D}_n$ be sampled from $\mathscr{D}$ with $1< \gamma < 2$ and $\widehat{G}_n = \emph{\texttt{ECM}}({\bf D}_n)$. Then for any $\delta > 0$,
\[
	\frac{Z_n}{n^{2 - \gamma + \delta}} \plim 0
\]
\end{theorem}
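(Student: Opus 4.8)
The plan is to bound the conditional expectation $\Expn{Z_n}$ given the degree sequence, and then pass to an unconditional statement via Markov's inequality, handling the random total degree $L_n$ through a good event. Write $Z_n = Z_n^{\mathrm{s}} + Z_n^{\mathrm{m}}$, where $Z_n^{\mathrm{s}} = \sum_i X_{ii}$ counts self-loops and $Z_n^{\mathrm{m}} = \sum_{i<j}(X_{ij} - \ind{X_{ij}>0})$ counts erased multiple edges. For the self-loops, the elementary identity $\Expn{X_{ii}} = \binom{D_i}{2}/(L_n-1)$ gives $\Expn{Z_n^{\mathrm{s}}} \le \sum_i D_i^2/(2(L_n-1))$. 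Since $L_n = (\mu + \op(1))n$ while $\sum_i D_i^2 = \bigOp{n^{2/\gamma}}$ up to a slowly-varying factor for regularly-varying degrees of index $\gamma$, this is of order $n^{2/\gamma-1}$ up to slowly-varying corrections; because $2/\gamma-1 < 2-\gamma$ on $(1,2)$, the self-loop part is already $\op(n^{2-\gamma+\delta})$ and is negligible. The real work is the multiple edges.

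For the multiple edges the crux is a two-sided comparison capturing the true order $\lambda \wedge \lambda^2$ of the Poisson-type quantity $\lambda - 1 + \me^{-\lambda}$ with $\lambda \approx D_iD_j/L_n$. On one hand $X_{ij} - \ind{X_{ij}>0} \le X_{ij}$ with $\Expn{X_{ij}} = D_iD_j/(L_n-1)$; on the other hand $X_{ij} - \ind{X_{ij}>0} \le \binom{X_{ij}}{2}$ pointwise, and a direct stub-counting argument gives $\Expn{\binom{X_{ij}}{2}} \le C (D_iD_j)^2/L_n^2$. Combining these,
\[
	\Expn{X_{ij}-\ind{X_{ij}>0}} \;\le\; C\,\min\!\left(\frac{(D_iD_j)^2}{L_n^2},\,\frac{D_iD_j}{L_n}\right).
\]
Both estimates are indispensable: the quadratic bound alone has infinite expectation since $\Exp{\mathscr D^2}=\infty$, whereas the linear bound alone only yields the weaker order $n$.

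Summing over pairs and working on the good event $A_n = \{L_n \ge \mu n/2\}$ (on which $\Prob{A_n^c}\to 0$ and $1/L_n \le 2/(\mu n)$), I would reduce the estimate to
\[
	\Exp{\min\!\left(\frac{(D_1 D_2)^2}{n^2},\,\frac{D_1 D_2}{n}\right)}
	= \frac{1}{n^2}\Exp{(D_1D_2)^2 \ind{D_1D_2 \le n}} + \frac{1}{n}\Exp{D_1 D_2 \ind{D_1 D_2 > n}}
\]
for two independent copies $D_1,D_2$ of $\mathscr D$. Conditioning on one factor and invoking Karamata's theorem for the truncated moments $\Exp{\mathscr D^2 \ind{\mathscr D \le K}} \asymp \mathcal L(K)K^{2-\gamma}$ and $\Exp{\mathscr D^\gamma \ind{\mathscr D \le K}}$ (a slowly-varying function, as $\mathscr D^\gamma$ has tail index exactly $1$), both terms evaluate to $n^{-\gamma}$ times a slowly-varying factor. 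Multiplying by $\binom{n}{2}$ gives $\Exp{Z_n^{\mathrm m}\ind{A_n}} \le n^{2-\gamma}\mathcal L^\ast(n)$ for some slowly-varying $\mathcal L^\ast$, and applying Markov's inequality conditionally on the ($\mathbf{D}_n$-measurable) event $A_n$, then adding $\Prob{A_n^c}\to 0$, yields $Z_n^{\mathrm m} = \op(n^{2-\gamma+\delta})$ for every $\delta>0$; together with the self-loop bound this proves the claim.

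The step I expect to be the main obstacle is the moment computation in the last paragraph: the product $D_1D_2$ of two infinite-variance regularly-varying variables has tail index $\gamma$ with an extra logarithmic correction (equivalently, $\mathscr D^\gamma$ sits precisely at tail index $1$), so the truncated moments generate slowly-varying—and possibly genuinely logarithmic—factors that must be tracked carefully, though they are harmless since the theorem permits an arbitrary $n^{\delta}$. A secondary technical point is the clean decoupling of the random denominator $L_n$ from the numerators $D_iD_j$, which is exactly what the event $A_n$ and the conditional Markov step are designed to accomplish.
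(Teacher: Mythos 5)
Your proposal is correct, and it reaches the theorem by a genuinely different route than the paper, even though the overall architecture (bound $\Expn{Z_n}$ on a degree-measurable good event, then conditional Markov) is shared. The paper's core estimate approximates $\Probn{X_{ij}=0}$ by $\me^{-D_iD_j/(L_n-1)}$ via a product formula from earlier work, which yields $\Expn{X_{ij}-\ind{X_{ij}>0}}\le \phi\bigl(D_iD_j/(L_n-1)\bigr)+\text{error}$ with $\phi(x)=x-1+\me^{-x}$; controlling the error forces the paper to also work on events bounding $\max_i D_i$ and $\sum_i D_i^2$, and the final tail estimate $\Exp{\phi(D_1D_2/t)}=o(t^{-\gamma+\delta})$ is imported from Bingham--Teugels. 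You instead exploit the elementary pointwise bound $X_{ij}-\ind{X_{ij}>0}\le \min\bigl(X_{ij},\binom{X_{ij}}{2}\bigr)$ together with the exact stub-counting identities $\Expn{X_{ij}}=D_iD_j/(L_n-1)$ and $\Expn{\binom{X_{ij}}{2}}=2\binom{D_i}{2}\binom{D_j}{2}/((L_n-1)(L_n-3))$, and then evaluate $\Exp{\min((D_1D_2)^2/n^2,\,D_1D_2/n)}$ by a hands-on Karamata split at $D_1D_2=n$, using that the product of two independent regularly-varying variables of index $\gamma$ is again regularly varying of index $\gamma$ (a fact the paper itself uses elsewhere, in the proof of its Proposition on $\widehat r^{\,+}_n$). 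Morally the two arguments coincide, since $\phi(x)\asymp x\wedge x^2$ --- a connection you explicitly note --- but your version is more self-contained: it needs only the single event $\{L_n\ge \mu n/2\}$ rather than three, avoids the self-pairing correction term $\sum_{i<j}D_i^2D_j/(L_n-2D_i)^2$ that the exponential approximation generates, and replaces the cited Tauberian asymptotic by truncated-moment computations in which the logarithmic slowly-varying corrections you flag are indeed harmless against the $n^\delta$ slack. What the paper's route buys in exchange is machinery ($\phi$ and the approximation $\Probn{X_{ij}=0}\approx\me^{-D_iD_j/L_n}$) that is reused later, e.g.\ in its Lemma bounding $\sum_{i<j}\bigl|\Probn{X_{ij}=0}-\me^{-D_iD_j/L_n}\bigr|D_iD_j$ for Pearson's coefficient. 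Your handling of the self-loops ($\Expn{X_{ii}}=\binom{D_i}{2}/(L_n-1)$, giving order $n^{2/\gamma-1}$, negligible since $2/\gamma-1<2-\gamma$ on $(1,2)$) matches the paper's treatment of the corresponding term.
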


The scaling $n^{2-\gamma}$ is believed to be sharp, up to some slowly-varying function. We therefore conjecture that for any $\delta > 0$,
\[
	\frac{Z_n}{n^{2 - \gamma - \delta}} \plim \infty.
\]

From Theorem \ref{thm:scaling_erased_edges} we obtain several useful results, summarized in the corollary below. The proof can be found in Section \ref{ssec:proof_erased_edges}. Let $Z_{ij}$ be the number of edges between $i$ and $j$ that have been removed and $Y_i$ the number of removed stubs of node $i$. Then we have
\[
	Y_i = \sum_{j = 1}^n Z_{ij} = X_{ii} + \sum_{j \ne i}\left( X_{ij} - \ind{X_{ij} > 0}\right).
\]

\begin{corollary}\label{cor:scaling_degrees_erased_edges}
Let ${\bf D}_n$ be sampled from $\mathscr{D}$ with $1< \gamma < 2$ and $G_n = \emph{\texttt{ECM}}({\bf D}_n)$. Then, for any integer $p \ge 0$ and $\delta > 0$,
\[
	\frac{\sum_{i = 1}^n D_i^p Y_i}{n^{\frac{p}{\gamma} + 2 - \gamma + \delta}} \plim 0 \quad \text{and}
	\quad \frac{\sum_{1 \le i < j \le n} Z_{ij} D_i D_j}{n^{\frac{2}{\gamma} + 2 - \gamma + \delta}} \plim 0.
\]
\end{corollary}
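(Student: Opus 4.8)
The plan is to derive both statements from Theorem \ref{thm:scaling_erased_edges} together with standard tail bounds for the i.i.d.\ regularly-varying degrees. The starting observation is that both $Y_i$ and the total removed-edge count $Z_n$ are nonnegative, and that $\sum_{i=1}^n Y_i = 2Z_n$ (each removed edge contributes two removed stubs, each removed self-loop contributes one that we may absorb into the constant). Hence $\sum_i Y_i = O_{\scriptscriptstyle\pr}(Z_n) = \op(n^{2-\gamma+\delta})$ for every $\delta>0$ by Theorem \ref{thm:scaling_erased_edges}. The key structural input I would use is that each individual $Y_i$ is bounded by the degree itself, $Y_i \le D_i$, since a node cannot lose more stubs than it has. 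This lets me trade off the weight $D_i^p$ against the removed-stub count.

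For the first statement, I would split the sum $\sum_{i=1}^n D_i^p Y_i$ according to a threshold on the degrees. Let $M = M_n$ be a cutoff to be chosen as a small power of $n$, say $M = n^{1/\gamma + \eta}$ for small $\eta>0$, which w.h.p.\ exceeds the maximal degree $D_{(n)} = \max_i D_i$ up to a slowly-varying correction (recall that for regularly-varying degrees with index $\gamma$ one has $D_{(n)} = \Theta_{\scriptscriptstyle\pr}(n^{1/\gamma})$ modulo slowly-varying factors). On the event $\{D_{(n)} \le M\}$, which holds w.h.p., I bound $D_i^p \le M^p$ uniformly, so that
\[
	\sum_{i=1}^n D_i^p Y_i \le M^p \sum_{i=1}^n Y_i = M^p \cdot O_{\scriptscriptstyle\pr}(n^{2-\gamma+\delta'}).
\]
Plugging in $M = n^{1/\gamma+\eta}$ gives an upper bound of order $n^{p/\gamma + 2-\gamma + p\eta + \delta'}$, and choosing $\eta$ and $\delta'$ small enough relative to the target slack $\delta$ yields the claim $\sum_i D_i^p Y_i = \op(n^{p/\gamma + 2 - \gamma + \delta})$.

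For the second statement I would argue in exactly the same way, now using that $\sum_{i<j} Z_{ij} = Z_n = \op(n^{2-\gamma+\delta'})$ directly, and bounding each weight $D_i D_j \le D_{(n)}^2 \le M^2$ on the high-probability event $\{D_{(n)}\le M\}$; with $M = n^{1/\gamma+\eta}$ this produces the bound $n^{2/\gamma + 2-\gamma + 2\eta+\delta'}$, again absorbable into $n^{2/\gamma+2-\gamma+\delta}$. The main obstacle, and the only point requiring genuine care, is the control of the maximum degree $D_{(n)}$: the crude bound $D_{(n)} \le n^{1/\gamma + \eta}$ w.h.p.\ needs to be stated cleanly, since it is precisely the slowly-varying factor hidden in $\mathcal{L}$ and the extra $\eta$ that let the multiplicative factors $M^p$ be absorbed into the arbitrarily small $n^\delta$ slack. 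I would state this maximal-degree bound as a short lemma (or cite the standard extreme-value estimate for regularly-varying samples) and then the two displays above follow mechanically. One should double-check that the uniform bound $Y_i\le D_i$ and $D_iD_j\le D_{(n)}^2$ are genuinely worst-case and do not require independence, so that the argument is robust to the dependence introduced by the configuration-model pairing.
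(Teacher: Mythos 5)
Your proposal is correct and takes essentially the same route as the paper: bound the degree weights by the maximum degree, whose scaling $\max_{1\le i\le n} D_i^p=\op(n^{p/\gamma+\delta})$ is precisely Proposition~\ref{prop:scaling_sums_powers_regularly_varying}(iii), and control the removed stubs via $\sum_{i=1}^n Y_i\le 2Z_n$ together with Theorem~\ref{thm:scaling_erased_edges}. The only immaterial difference is in the second bound, where the paper factors out a single maximum degree and invokes the first part with $p=1$, i.e.\ $\sum_{1\le i<j\le n}Z_{ij}D_iD_j\le\tfrac12\max_{1\le j\le n}D_j\sum_{i=1}^n D_iY_i$, whereas you bound both weights by $D_{(n)}^2$ and use $\sum_{i<j}Z_{ij}\le Z_n$ directly; the resulting exponents coincide.
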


The first result of Corollary \ref{cor:scaling_degrees_erased_edges} gives the scaling of the difference of the sum of powers of degrees, between \texttt{CM} and \texttt{ECM}. To see why, note that since $\widehat{D}_i^q = (D_i - Y_i)^q$ and $Y_i \le D_i$, by the mean value theorem we have, for any integer $q \ge 1$,
\[
	\left|\sum_{i=1}^n D_i^q - \sum_{i=1}^n \widehat{D}_i^q\right| \le q \sum_{i = 1}^n D_i^{q-1} Y_i.
\]
Hence, for any $q \ge 1$ and $\delta > 0$,
\[
	\left|\sum_{i=1}^n D_i^q - \sum_{i=1}^n \widehat{D}_i^q\right| = o_\pr\left(n^{\frac{q-1}{\gamma} + 2 -\gamma +\delta}\right).
\]

\subsection{Results for regularly-varying random variables}

In addition to the number of edges, we shall make use of several results, regarding the scaling of expressions with regularly-varying random variables. We summarize them here, starting with a concentration result on the sum of i.i.d. samples, which is a direct consequence of the Kolmogorov-Marcinkiewicz-Zygmund strong law of large numbers.
\begin{lemma}\label{lem:Kolmogorov_SLLN}
Let $(X_i)_{i \ge 1}$ be independent copies from a non-negative regularly-varying random 
variable $X$ with exponent $\gamma > 1$ and mean $\mu$. Then, with $\kappa = (\gamma - 1)/(1 + \gamma)$,
\[
	\frac{\left|\mu n - \sum_{i = 1}^n X_i\right|}{n^{1 - \kappa}} \plim 0.
\]
\end{lemma}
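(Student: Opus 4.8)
The plan is to reduce everything to the Kolmogorov--Marcinkiewicz--Zygmund (KMZ) strong law, whose content is: if $(X_i)_{i\ge1}$ are i.i.d.\ with $\Exp{|X_1|^p}<\infty$ for some $p\in[1,2)$ and mean $\mu$, then $n^{-1/p}\big(\sum_{i=1}^n X_i-\mu n\big)\to 0$ almost surely. The only genuine work is to pick the exponent $p$ that matches the prescribed normalization $n^{1-\kappa}$ and to verify that the corresponding moment is finite. Since $\kappa=(\gamma-1)/(1+\gamma)$, a direct computation gives $1-\kappa=2/(1+\gamma)$, so I would choose $p=(1+\gamma)/2$, for which $1/p=2/(1+\gamma)=1-\kappa$ exactly.

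Next I would check the two hypotheses of KMZ for this $p$. First, $p\in[1,2)$: in the regime of interest $1<\gamma<2$ one has $p=(1+\gamma)/2\in(1,\tfrac32)\subset(1,2)$, so the law applies with centering at the mean $\mu$. Second, and this is the step that actually invokes regular variation, I must show $\Exp{X^p}<\infty$. Writing $\Exp{X^p}=\int_0^\infty p\, t^{p-1}\Prob{X>t}\dd t$ and using $\Prob{X>t}=\mathcal{L}(t)t^{-\gamma}$, the integrand behaves like $p\,\mathcal{L}(t)\,t^{p-1-\gamma}$ as $t\to\infty$; by Karamata's theorem this integral converges precisely when $p-1-\gamma<-1$, i.e.\ when $p<\gamma$. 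The choice $p=(1+\gamma)/2$ satisfies $p<\gamma$ if and only if $\gamma>1$, which holds by assumption, so $\Exp{X^p}<\infty$.

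With both hypotheses verified, KMZ yields $n^{-1/p}\big(\sum_{i=1}^n X_i-\mu n\big)\to 0$ almost surely, that is, $\big|\mu n-\sum_{i=1}^n X_i\big|/n^{1-\kappa}\to 0$ almost surely, and almost sure convergence implies the claimed convergence in probability.

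I do not expect a real obstacle here: the statement is essentially a bookkeeping exercise of matching the normalization exponent to the right moment and confirming that the slowly varying factor $\mathcal{L}$ does not spoil the strict inequality $p<\gamma$ in the moment bound (Karamata guarantees this, since the convergence of $\int^\infty t^{p-1-\gamma}\mathcal{L}(t)\dd t$ is governed by the strict sign of $p-1-\gamma+1$). The one point requiring mild care is that KMZ needs $p<2$ strictly, equivalently $\gamma<3$; this is automatic in the paper's regime $1<\gamma<2$, where all applications of the lemma occur, so it causes no difficulty.
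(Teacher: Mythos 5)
Your proof is correct and follows exactly the route the paper intends: the paper gives no separate argument but states the lemma as a direct consequence of the Kolmogorov--Marcinkiewicz--Zygmund strong law, and your choice $p=(1+\gamma)/2$, with the verification that $1/p=1-\kappa$ and $\Exp{X^p}<\infty$ since $p<\gamma$, is precisely the intended reduction. You were also right to flag the constraint $p<2$ (i.e.\ $\gamma<3$, without which the stated rate would even contradict the CLT when the variance is finite); this is harmless since every application in the paper has $1<\gamma<2$.
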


In particular, since $L_n = \sum_{i = 1}^n D_i$, with all $D_i$ being i.i.d. with regularly-varying distribution \eqref{eq:distribution_degrees} and mean $\mu$, it holds that $n^{\kappa - 1}\left|L_n - \mu n\right| \plim 0$. Therefore, the above lemma allows us to replace $L_n$ with $\mu n$ in our expressions.

The next proposition gives the scaling of sums of different powers of independent copies of a regularly-varying random variable. Recall that $(x \vee y)$ denotes the maximum of $x$ and $y$.

\begin{proposition}[\hspace{-0.5ex}{\cite[Proposition 2.4]{Hoorn2016}}]\label{prop:scaling_sums_powers_regularly_varying}
Let $(X_i)_{i \ge 1}$ be independent copies of a non-negative regularly-varying random 
variable $X$ with exponent $\gamma > 1$. Then,
\begin{enumerate}[\upshape i)]
	\item for any integer $p \ge 1$ and $\delta > 0$,
	\[
		\frac{\sum_{i = 1}^n X_i^p}{n^{\left(\frac{p}{\gamma} \vee 1\right) + \delta}} \plim 0;
	\]
	\item for any integer $p \ge 1$ with $\gamma < p$ and $\delta > 0$,
	\[
		\frac{n^{\frac{p}{\gamma} - \delta}}{\sum_{i = 1}^n X_i^p} \plim 0;
	\]
	\item for any integer $p \ge 1$ and $\delta > 0$
	\[
		\frac{\max_{1 \le i \le n} D_i^p}{n^{\frac{p}{\gamma} + \delta}} \plim 0.
	\]
\end{enumerate}
\end{proposition}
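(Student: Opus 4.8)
The plan is to prove the three parts separately, disposing of the extremal bounds (ii) and (iii) by direct first-moment and union estimates, and concentrating the real work on the upper bound (i), where a truncation argument is needed once the relevant power has infinite mean. Throughout I write $\bar F(t)=\Prob{X>t}=\mathcal{L}(t)t^{-\gamma}$ and use freely that a slowly-varying function times a strictly negative power of $n$ tends to $0$, while times a strictly positive power it tends to $\infty$. Part (iii) is phrased with $D_i$, but the $X_i$ are simply independent copies of the same variable $X$, so the two are interchangeable and I argue for $X_i$ throughout.

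For (iii) I would apply a union bound:
\[
\Prob{\max_{i \le n} X_i^p > n^{p/\gamma + \delta}} \le n\,\bar F\bigl(n^{1/\gamma + \delta/p}\bigr) = \mathcal{L}\bigl(n^{1/\gamma+\delta/p}\bigr)\, n^{-\gamma\delta/p}.
\]
Since the right-hand side is a slowly-varying function times $n^{-\gamma\delta/p}$, it vanishes, which is exactly the claim. For (ii) I would bound the sum below by its largest term, $\sum_i X_i^p \ge \max_{i\le n}X_i^p$; it then suffices to show that for some $\delta'\in(0,\delta)$ one has $\max_{i\le n}X_i\ge n^{1/\gamma-\delta'/p}$ with high probability, since then $\sum_i X_i^p\ge n^{p/\gamma-\delta'}$ and the target ratio is at most $n^{\delta'-\delta}\to0$. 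This follows from
\[
\Prob{\max_{i\le n} X_i < n^{1/\gamma - \delta'/p}} = \bigl(1 - \bar F(n^{1/\gamma - \delta'/p})\bigr)^n \le \me^{-n\bar F(n^{1/\gamma-\delta'/p})},
\]
together with $n\bar F(n^{1/\gamma-\delta'/p})=\mathcal{L}(n^{1/\gamma-\delta'/p})\,n^{\gamma\delta'/p}\to\infty$.

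For the upper bound (i), set $a_n=n^{(p/\gamma\vee 1)+\delta}$ and split into two regimes. When $p<\gamma$ the variable $X^p$ has tail exponent $\gamma/p>1$, hence finite mean, so the weak law of large numbers (or Lemma~\ref{lem:Kolmogorov_SLLN} applied to $X^p$) gives $\sum_i X_i^p=\bigOp{n}$; as $p/\gamma\vee 1=1$ this is $\op(a_n)$. When $p\ge\gamma$ I would truncate at level $b_n=n^{1/\gamma+\eta}$ for a small $\eta>0$: the event that some $X_i$ exceeds $b_n$ has probability at most $n\bar F(b_n)=\mathcal{L}(b_n)n^{-\gamma\eta}\to0$, so with high probability the sum equals its truncation $\sum_i X_i^p\ind{X_i\le b_n}$. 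Markov's inequality combined with Karamata's theorem bounds the mean of this truncated sum by a constant times $n\,b_n^{p-\gamma}\mathcal{L}(b_n)$ when $p>\gamma$ (and by $n$ times a slowly-varying factor in the boundary case $p=\gamma$), i.e.\ by $\mathcal{L}(b_n)\,n^{p/\gamma+(p-\gamma)\eta}$, which is $\op(a_n)$ once $\eta$ is chosen so small that $(p-\gamma)\eta<\delta/2$ and the slowly-varying factor is absorbed into $n^{\delta/2}$.

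The main obstacle is precisely this last step of part (i) in the infinite-mean regime $p>\gamma$: one must estimate $\Exp{X^p\ind{X\le b_n}}$ through Karamata's theorem and track the accompanying slowly-varying function carefully, choosing the truncation exponent $\eta$ so as to simultaneously render the discarded tail negligible and keep the Markov bound strictly below $n^{p/\gamma+\delta}$. Balancing these two competing demands on $\eta$, and verifying that the leftover slowly-varying factor is genuinely swallowed by the spare $n^{\delta}$, is where the care lies; the remaining parts reduce to the routine union and maximum estimates above.
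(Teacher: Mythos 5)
Your proposal is correct, but note first that the paper itself contains no proof of this proposition: it is imported verbatim from \cite[Proposition 2.4]{Hoorn2016}, so there is no internal proof to compare against, and your argument must be judged on its own terms. It holds up. Parts (ii) and (iii) are the standard extreme-value estimates: the union bound gives $\Prob{\max_{i\le n}X_i^p>n^{p/\gamma+\delta}}\le \mathcal{L}(n^{1/\gamma+\delta/p})\,n^{-\gamma\delta/p}\to 0$ by Potter's bound, and $(1-\Prob{X>t_n})^n\le \me^{-n\Prob{X>t_n}}$ with $n\Prob{X>n^{1/\gamma-\delta'/p}}=\mathcal{L}(n^{1/\gamma-\delta'/p})\,n^{\gamma\delta'/p}\to\infty$ forces the maximum, hence the sum, above $n^{p/\gamma-\delta'}$ w.h.p.\ (your lower-bound argument does not even use the hypothesis $\gamma<p$, which only serves to make the claim nontrivial). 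For (i), the truncation at $b_n=n^{1/\gamma+\eta}$ is the right device: the exceedance probability is $O(\mathcal{L}(b_n)n^{-\gamma\eta})$, Karamata gives $\Exp{X^p\ind{X\le b_n}}=O(\mathcal{L}(b_n)b_n^{p-\gamma})$ for $p>\gamma$, and Markov leaves the ratio $\mathcal{L}(b_n)\,n^{(p-\gamma)\eta-\delta}$, which your choice $(p-\gamma)\eta<\delta/2$ kills with the slowly-varying factor absorbed by Potter; your remark on the boundary case $p=\gamma$ (truncated mean slowly varying) is also fine, though for integer $p$ that case only arises when $\gamma$ is itself an integer, so it is vacuous in the paper's regime $\gamma\in(1,2)$. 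The natural alternative — and likely the route in \cite{Hoorn2016}, consistent with how this paper repeatedly invokes the Stable Law CLT, e.g.\ in \eqref{eq:degmoments} — is to read (i) and (ii) off the stable limit $\sum_{i=1}^n X_i^p/(\mathcal{L}'(n)n^{p/\gamma})\dlim \mathcal{S}_{\gamma/p}$ for $p>\gamma$, which delivers the sharp normalization in one stroke; your truncation argument is more elementary and self-contained, requiring only Karamata's theorem and Potter bounds rather than a weak-convergence theorem.
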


Finally we have following lemma, where we write $f(t) \sim g(t)$ as $t \to \infty$ to denote that $\lim_{t \to \infty} f(t)/g(t) = 1$ and recall that $(x \wedge y)$ denotes the minimum of $x$ and $y$.

\begin{lemma}[\hspace{-0.5ex}{\cite[Lemma 2.6]{Hoorn2016}}]\label{lem:karamata}
Let $X$ be a non-negative regularly-varying random variable with exponent $1 < \gamma < 2$ and slowly-varying function $\mathcal{L}$.
Then,
\[
	\Exp{X \left(1 \wedge \frac{X}{t}\right)} \sim \frac{\gamma}{3\gamma - 2 - gamma^2} 
	\mathcal{L}(t) t^{1 - \gamma}, \quad \text{as } t \to \infty.
\]
\end{lemma}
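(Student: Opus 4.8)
The plan is to split the expectation according to whether $X \le t$ or $X > t$. Since $X\left(1 \wedge \frac{X}{t}\right)$ equals $X^2/t$ on $\{X \le t\}$ and equals $X$ on $\{X > t\}$, I would write
\[
	\Exp{X\left(1 \wedge \frac{X}{t}\right)} = \frac{1}{t}\Exp{X^2 \ind{X \le t}} + \Exp{X \ind{X > t}},
\]
and estimate each term separately using Karamata's theorem, abbreviating $\bar{F}(x) = \Prob{X > x} = \mathcal{L}(x)x^{-\gamma}$.

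For the truncated second moment, integration by parts (using $x^2\bar{F}(x)\to 0$ at $x = 0$) gives $\Exp{X^2\ind{X\le t}} = -t^2\bar{F}(t) + 2\int_0^t x\,\bar{F}(x)\dd x$. Since $1 - \gamma > -1$, Karamata's theorem for integrals up to $t$ yields $\int_0^t x^{1-\gamma}\mathcal{L}(x)\dd x \sim (2-\gamma)^{-1} t^{2-\gamma}\mathcal{L}(t)$; here I would note that the contribution of a fixed neighborhood of the origin is a bounded constant, hence negligible against $t^{2-\gamma}\mathcal{L}(t)\to\infty$ (as $2-\gamma>0$). Combining the two pieces gives $\Exp{X^2 \ind{X \le t}} \sim \frac{\gamma}{2-\gamma}\, t^{2-\gamma}\mathcal{L}(t)$, so that, after dividing by $t$, the first term contributes $\frac{\gamma}{2-\gamma}\, t^{1-\gamma}\mathcal{L}(t)$.

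For the tail moment, integration by parts gives $\Exp{X\ind{X>t}} = t\,\bar{F}(t) + \int_t^\infty \bar{F}(x)\dd x$, where the boundary term at infinity vanishes because $x\bar{F}(x) = x^{1-\gamma}\mathcal{L}(x)\to 0$ (as $\gamma > 1$). Karamata's theorem at infinity (valid since $-\gamma < -1$) gives $\int_t^\infty x^{-\gamma}\mathcal{L}(x)\dd x \sim (\gamma-1)^{-1} t^{1-\gamma}\mathcal{L}(t)$, so this term contributes $\frac{\gamma}{\gamma-1}\, t^{1-\gamma}\mathcal{L}(t)$.

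Adding the two contributions and simplifying the constant, $\frac{\gamma}{2-\gamma} + \frac{\gamma}{\gamma-1} = \frac{\gamma\bigl[(\gamma-1)+(2-\gamma)\bigr]}{(2-\gamma)(\gamma-1)} = \frac{\gamma}{(2-\gamma)(\gamma-1)} = \frac{\gamma}{3\gamma - 2 - \gamma^2}$, which is exactly the claimed constant. The only care needed is in invoking Karamata's theorem correctly at both the truncation scale and at infinity and in verifying that the lower-order pieces (the boundary term near $0$ and the subtracted $t^2\bar{F}(t)$) are genuinely of smaller order than $t^{2-\gamma}\mathcal{L}(t)$; none of these is a real obstacle, so the proof amounts to careful bookkeeping around Karamata's theorem rather than any substantive difficulty.
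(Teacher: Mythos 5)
Your proof is correct. Note that the paper itself does not prove this lemma: it is imported verbatim from \cite{Hoorn2016} (Lemma 2.6), so there is no internal proof to compare against; your argument --- splitting at $\{X \le t\}$ so that the integrand becomes $X^2/t$ there and $X$ on the complement, integrating by parts against $\bar{F}(x)=\mathcal{L}(x)x^{-\gamma}$, and applying Karamata's theorem once at the truncation level (index $1-\gamma>-1$) and once at infinity (index $-\gamma<-1$) --- is the standard route and is, up to presentation, the proof of the cited lemma. All the delicate points are handled: the boundary terms $x^2\bar{F}(x)\to 0$ at $0$ and $x\bar{F}(x)\to 0$ at $\infty$ are justified by $\gamma\in(1,2)$, the origin contribution to $\int_0^t x\bar{F}(x)\,\dd x$ is bounded and hence negligible since $t^{2-\gamma}\mathcal{L}(t)\to\infty$, and the constants $\frac{\gamma}{2-\gamma}+\frac{\gamma}{\gamma-1}=\frac{\gamma}{(2-\gamma)(\gamma-1)}=\frac{\gamma}{3\gamma-2-\gamma^2}$ match the statement (whose displayed constant contains the obvious typo ``gamma'' for $\gamma$, which you read correctly). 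One slip in your prose only: in the final paragraph you describe the subtracted term $t^2\bar{F}(t)$ as ``genuinely of smaller order than $t^{2-\gamma}\mathcal{L}(t)$,'' but it equals $t^{2-\gamma}\mathcal{L}(t)$ exactly and is of the \emph{same} order as the main term; your displayed computation already treats it correctly, since it is precisely the source of the $-1$ in $\frac{2}{2-\gamma}-1=\frac{\gamma}{2-\gamma}$, so this is a misstatement in the commentary rather than a gap in the mathematics.
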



\subsection{Heuristics of the proof for Pearson in the \texttt{ECM}}\label{ssec:pearson_ecm_heuristics}

Let ${\bf D}_n$ be sampled from $\mathscr{D}$ with $1< \gamma < 2$, consider $G_n = \texttt{CM}({\bf D}_n)$ and let us write $r_n = r^+_n - r^-_n$, where $r^\pm_n$ are positive functions given by
\begin{align}
	r^+_n = \frac{\sum_{i,j = 1}^n X_{ij} D_i D_j}{\sum_{i = 1}^n D_i^3 - 
		\frac{1}{L_n} \left(\sum_{i = 1}^n D_i^2\right)^2}, \quad
	r^-_n = \frac{\frac{1}{L_n} \left(\sum_{i = 1}^n D_i^2\right)^2}{\sum_{i = 1}^n D_i^3 - 
		\frac{1}{L_n} \left(\sum_{i = 1}^n D_i^2\right)^2}. \label{eq:pearson_negative_part}
\end{align}

First note that by the Stable Law CLT, see for instance \cite{whitt2006}, there exist two slowly-varying functions $\mathcal{L}_0$ and $\mathcal{L}_0^\prime$ such that
\begin{align}
	\frac{\sum_{i = 1}^n D_i^2}{\mathcal{L}_0(n) n^{\frac{2}{\gamma}}} \dlim \mathcal{S}_{\gamma/2} 
	\quad \text{and} \quad
	\frac{\sum_{i = 1}^n D_i^3}{\mathcal{L}_0^\prime(n) n^{\frac{3}{\gamma}}} \dlim \mathcal{S}_{\gamma/3},
	\quad \text{as } n \to \infty. \label{eq:degmoments}
\end{align}
Applying this and using that $L_n \approx \mu n$,
\begin{equation}\label{eq:pearson_min_clt_idea}
	\mathcal{L}_1(n) \mu n^{1 - \frac{1}{\gamma}} r^-_n
	\approx \frac{\mathcal{L}_0^\prime(n) n^{\frac{3}{\gamma}}}{\left((\mathcal{L}_0(n) n^{\frac{2}{\gamma}}\right)^2}
	\frac{\left(\sum_{i = 1}^n D_i^2\right)^2}{\sum_{i = 1}^n D_i^3} \dlim
	\frac{\mathcal{S}_{\gamma/2}^2}{\mathcal{S}_{\gamma/3}},
\end{equation}
with $\mathcal{L}_1(n) = \mathcal{L}_0^\prime(n)/(\mathcal{L}_0(n))^2$. 
Note that $r^-_n$ scales roughly as $n^{\frac{1}{\gamma}-1}$ and thus tends to zero. This extends the results in the literature that $r_n$ has a non-negative limit~\cite{Hofstad2014}.

Next, we need to show that this result also holds when we move to the erased model, i.e. when all degrees $D_i$ are replaced by $\widehat{D}_i$. To understand how this works, consider the sum of the squares of the degrees in the erased model $\sum_{i = 1}^n \widehat{D}_i^2$. Recall that $Y_i$ is the number of removed stubs of node $i$. Then we have
\[
	\sum_{i = 1}^n \widehat{D}_i^2 = \sum_{i = 1}^n (D_i - Y_i)^2 = \sum_{i = 1}^n D_i^2 
	+ \sum_{i = 1}^n\left( Y_i^2 - 2D_iY_i\right),
\]
and hence
\begin{equation}
\label{eq:sum_Der_sq-bound}
	\left|\sum_{i = 1}^n \widehat{D}_i^2 - \sum_{i = 1}^n D_i^2\right| \le 2 \sum_{i = 1}^n Y_i D_i.
\end{equation}
Therefore we only need to show that the error vanishes when we divide by $n^{\frac{2}{\gamma}}$. For this we can
use Corollary \ref{cor:scaling_degrees_erased_edges} to get, heuristically,
\begin{equation}\label{eq:Der_D_scaled}
	n^{-\frac{2}{\gamma}}\left|\sum_{i = 1}^n \widehat{D}_i^2 - \sum_{i = 1}^n D_i^2\right| \le
	2n^{-\frac{2}{\gamma}} \sum_{i = 1}^n Y_i D_i 
	\approx n^{-\frac{1}{\gamma} + 2 - \gamma} \to 0.
\end{equation}

These results will be used to prove that when $\widehat{G}_n = \texttt{ECM}({\bf D}_n)$,
\[
	n^{1 - \frac{1}{\gamma}} \left|r^{-}_n - \widehat{r}^{\,-}_n\right| \plim 0,
\]
so that by \eqref{eq:pearson_min_clt_idea}
\[
	\mathcal{L}_1(n) \mu n^{1 - \frac{1}{\gamma}} \widehat{r}^{\,-}_n \dlim
	\frac{\mathcal{S}_{\gamma/2}^2}{\mathcal{S}_{\gamma/3}}, \quad \text{as } n \to \infty.
\]
The final ingredient is Proposition \ref{prop:pearson_ecm_positive_part}, where we show that for some $\delta > 0$, as $n \to \infty$,
\begin{equation}\label{eq:convergence_r_hat_+}
	n^{1 - \frac{1}{\gamma} + \delta} \widehat{r}^{\,+}_n \plim 0 \quad.
\end{equation}
The result then follows, since for $n$ large enough $\mathcal{L}_1(n) \le C n^{\delta}$ and hence
\[
	\mu \mathcal{L}_1(n) n^{1 - \frac{1}{\gamma}} \widehat{r}_n 
	= \mu \mathcal{L}_1(n) n^{1 - \frac{1}{\gamma}} \widehat{r}^{\,+}_n 
	- \mu \mathcal{L}_1(n) n^{1 - \frac{1}{\gamma}}\widehat{r}^{\,-}_n \dlim -\frac{\mathcal{S}^2_{\gamma/2}}{\mathcal{S}_{\gamma/3}}.
\]
To establish \eqref{eq:convergence_r_hat_+}, let $\widehat{X}_{ij}$ denote the number of edges between $i$ and $j$ in the erased graph and note that since we remove self-loops $\widehat{X}_{ii} = 0$, while in the other cases $\widehat{X}_{ij} = \ind{X_{ij} > 0}$. We consider the nominator of $\widehat{r}^{\,+}_n$
\[
	\sum_{1 \le i < j \le n} \widehat{X}_{ij}D_i D_j,
\]
and will show that, as $n \to \infty$,
\[
	n^{1 - \frac{4}{\gamma} + \delta}\sum_{1 \le i < j \le n} \widehat{X}_{ij}D_i D_j \plim 0,
\]
by approximating $\Expn{\widehat{X}_{ij}}$ by $1 - e^{-D_iD_j/L_n}$, see Lemma \ref{prop:ecm_joint_moments}. Since 
the denominator in $\widehat{r}^{\,+}_n$ scales as $n^{3/\gamma}$ we get that $n^{1 - 1/\gamma + \delta} \, \widehat{r}^{\,+}_n \plim 0$.


\subsection{Intuition behind Conjecture~\ref{conj-Pearson-CM}}\label{sec:int_conj}
We note that, by \eqref{eq:pearson} and since $\Big(\sum_{i\in[n]} D_i^2\Big)^2=\op\Big(L_n \sum_{i\in[n]} D_i^3\Big),$
\eqn{
	r_n=\frac{\sum_{i,j\in[n]} D_iD_j \Big[X_{ij}-\frac{D_iD_j}{L_n}\Big]}{\sum_{i\in[n]} D_i^3}(1+\op(1)).
}

We rewrite
\eqn{
	r_n=\frac{\sum_{i,j\in[n]} D_iD_j \Big[X_{ij}-\frac{D_iD_j}{L_n-1}\Big]}{\sum_{i\in[n]} D_i^3}(1+\op(1))+\frac{\sum_{i,j,\in[n]} \frac{D_i^2D_j^2}{L_n(L_n-1)}}{\sum_{i\in[n]} D_i^3}(1+\op(1)).
}
The second term is
\eqn{
	\bigOp{n^{4\gamma-2-3\gamma}}=\bigOp{n^{\gamma-2}}=\op(n^{-1/2}),
}
since $\gamma\in(\tfrac{1}{2},1)$, and can thus be ignored. We are thus left to study the first term. 

Since $\expec_n[X_{ij}]=D_iD_j/(L_n-1)$, this term is centered. Further, the probability that any half-edge incident to vertex $i$ is connected to vertex $j$ equals $D_j/(L_n-1)$. These indicators are weakly dependent, so we assume that we can replace the conditional law of $X_{ij}$ given the degrees by a binomial random variable with $D_i$ experiments and success probability $D_j/(L_n-1)$. We will also assume that these random variables are asymptotically {\it independent}. These are the two main assumptions made in this heuristic explanation.

Since a binomial random variable is close to a normal when the number of experiments tends to infinity, these assumptions then suggest that 
\eqn{
	r_n\approx \mathcal{N}(0,\sigma_n^2),
}
where, with ${\rm Var}_n$ denoting the conditional variance given the degrees,
\eqn{
	\sigma_n^2=\frac{\sum_{i,j\in[n]} D_i^2D_j^2 {\rm Var}_n(X_{ij})}{\Big(\sum_{i\in[n]} D_i^3\Big)^2}.
}
Further, again using that $X_{ij}$ is close to a binomial, ${\rm Var}_{n}(X_{ij})\approx D_i (D_j/(L_n-1))(1-D_j/(L_n-1))\approx D_iD_j/L_n$. This suggests that
\eqn{
	\sigma_n^2\approx \frac{\sum_{i<j\in[n]} D_i^3D_j^3/L_n}{\Big(\sum_{i\in[n]} D_i^3\Big)^2}=\frac{1}{L_n},
}
which supports the conjecture in \eqref{CLT-Pearson-CM} but now with $\sigma^2=1/\mu$.  

It turns out that the above analysis is not quite correct, as $X_{ij}=X_{ji}$ when $i\leq j$, which means that these terms are highly correlated. Since terms with $i<j$ also appear several times, whereas $i=j$ does not, this turns out to change the variance formula slightly, as we discuss in the proof of Lemma~\ref{lem-Pearon-CM-Var} in Section~\ref{sec:lempearsoncm}.

\subsection{Proofs for clustering in \texttt{CM} and \texttt{ECM}}\label{ssec:clustering_heuristics}

The general idea behind the proof for both Theorems \ref{thm:CCM} and \ref{thm:CECM} is that, conditioned on the degrees, the clustering coefficients are concentrated around their conditional mean. We then proceed by analyzing this term using stable laws for regularly-varying random variables to obtain the results. 

\subsubsection{Configuration model}

To construct a triangle, six different half-edges at three distinct vertices need to be paired into a triangle. For a vertex with degree $D_i$, there are $D_i(D_i-1)/2$ ways to choose two half-edges incident to it. The probability that any two half-edges are paired in the configuration model can be approximated by $1/L_n$. Thus, the probability that a given set of six half-edges forms a triangle can be approximated by $1/L_n^3$. We then investigate $\mathcal{I}$, the set of all sets of six half-edges that could possibly form a triangle together. The expected number of triangles can then be approximated by $|\mathcal{I}|/L_n^3$. By computing the size of the set $\mathcal{I}$, we obtain that the conditional expectation for the clustering coefficient can be written as 
\[
	\Expn{C_n} \approx \frac{|\mathcal{I}|}{L_n^3\sum_{i \in[n]}D_i(D_i-1)}\approx \frac{1}{L_n^3}\left(\left(\sum_{i = 1}^n D_i^2\right)^3 - 3\sum_{i = 1}^n D_i^4+2\frac{\sum_{i=1}^nD_i^6}{\sum_{i=1}^nD_i^2}\right).
\]
The full details can be found in Section~\ref{ssec:clustering_cm}. Here the first term describes the expected number of times six half-edges are paired into a triangle. The last two terms exclude triangles including either multi-edges or self-loops. Then by the Stable-Law CLT (\cite{whitt2006}) we have that there exists a slowly-varying function $\mathcal{L}_2$ such that
\[
	\frac{\sum_{i = 1}^n D_i^2}{\mathcal{L}_2(n)n^{\frac{2}{\gamma}}} \dlim \mathcal{S}_{\gamma/2}, \quad 
	\frac{\sum_{i = 1}^n D_i^4}{\mathcal{L}_2(n)^2n^{\frac{4}{\gamma}}} \dlim \mathcal{S}_{\gamma/4}\quad
	\text{and}\quad \frac{\sum_{i=1}^nD_i^6}{\mathcal{L}_2(n)^6n^{\frac{2}{\gamma}}\sum_{i=1}^{n}D_i^2}\dlim \frac{\mathcal{S}_{\gamma/6}}{\mathcal{S}_{\gamma/2}}.
\]
Hence, using that $L_n \approx \mu n$ we obtain that
\[
	\frac{\Expn{C_n}}{\mathcal{L}_2(n) n^{\frac{4}{\gamma} - 3}} \dlim \frac{1}{\mu^3} \left(\mathcal{S}_{\gamma/2}^2
	- 3\mathcal{S}_{\gamma/4}+\frac{2\mathcal{S}_{\gamma/6}}{\mathcal{S}_{\gamma/2}}\right),
\]
where $\mathcal{S}_{\gamma/2}$, $\mathcal{S}_{\gamma/4}$ and $\mathcal{S}_{\gamma/6}$ are given by \eqref{eq:def_S_gamma}. To complete the proof we establish a concentration result for $C_n$, conditioned on the degrees. To be more precise, we employ a careful counting argument, following the approach in the proof of \cite[Proposition 7.13]{VanDerHofstad2016a} to show (see Lemma \ref{lem:exvar}) that there exists a $\delta > 0$ such that
\[
	\frac{n^\delta \textrm{Var}_n(C_n)}{n^{\frac{8}{\gamma} - 6}} \plim 0,
\] 
where $\textrm{Var}_n$ denotes the conditional variance given the degrees. Then, it follows from Chebyshev's inequality, conditioned on the degrees, that
\[
	\frac{\left|C_n - \Expn{C_n}\right|}{\mathcal{L}_2(n) n^{\frac{4}{\gamma} - 3}} \plim 0,
\]
and we conclude that
\[
	\frac{C_n}{\mathcal{L}_2(n) n^{\frac{4}{\gamma} - 3}} \dlim \frac{1}{\mu^3} \left(\mathcal{S}_{\gamma/2}^2
		- 3\mathcal{S}_{\gamma/4}+2\mathcal{S}_{\gamma/6}/\mathcal{S}_{\gamma/2}\right).
\]

\subsubsection{Erased configuration model}

The difficulty for clustering in \texttt{ECM}, compared to \texttt{CM}, is in showing that $\widehat{C}_n$ behaves as its conditional expectation, as well as establishing its scaling. To compute this we first fix an $\varepsilon > 0$ and show in Lemma \ref{lem:sqrt} that the main contribution is given by triples of nodes with degrees $\varepsilon \sqrt{n} \le D \le \frac{\sqrt{n}}{\varepsilon}$, i.e.
\begin{align*}
	\sum_{1 \le i < j < k \le n} \widehat{X}_{ij}\widehat{X}_{jk}\widehat{X}_{ik} 
	&= \sum_{1 \le i < j < k \le n} \widehat{X}_{ij}\widehat{X}_{jk}\widehat{X}_{ik}
	\ind{\varepsilon \sqrt{n} \le D_i, D_j, D_k \le \frac{\sqrt{n}}{\varepsilon}}\\ 
	&\hspace{10pt}+ O(\mathcal{L}(\sqrt{\mu n})^3n^{\frac{3}{2}(2-\gamma)}) \mathcal{E}_1(\varepsilon),
\end{align*}
where $\mathcal{E}_1(\varepsilon)$ is an error function, independent of $n$, with $\lim_{\varepsilon \to 0} \mathscr{E}_1(\varepsilon) = 0$. 
Then we use that approximately $\Expn{\widehat{X}_{ij}} \approx 1 - \me^{-{D_i D_j}/{L_n}}$ to show
that
\[
\Expn{\widehat{C}_n}\approx \frac{6\sum_{1\leq i< j< k\leq n} g_{n, \varepsilon}(D_i, D_j, D_k) +O(\mathcal{L}(\sqrt{\mu n})^3n^{\frac{3}{2}(2-\gamma)}) \mathcal{E}_1(\varepsilon)}
{\sum_{i = 1}^n \widehat{D}_i(\widehat{D}_i - 1)},
\]
where 
\[
g_{n, \varepsilon}(x,y,z) = \left(1 - \me^{-\frac{xy}{L_n}}\right)\left(1 - \me^{-\frac{yz}{L_n}}\right)\left(1 - \me^{-\frac{zx}{L_n}}\right)
\ind{\varepsilon \sqrt{n} \le x, y, z \le \frac{\sqrt{n}}{\varepsilon}}.
\]
Here $\mathbb{E}_n$ denotes again expectation conditioned on $\boldsymbol{D}_n$, hence conditional on the sampled  degrees of the underlying CM. The precise statement can be found in in Lemma~\ref{lem:extriang}.
After that, we show in Lemma~\ref{lem:exvarECM} that $\widehat{C}_n$ concentrates around its expectation conditioned on the sampled degrees, so that conditioned on the sampled degree sequence, we can approximate $\widehat{C}_n\approx \Expn{\widehat{C}_n}$.
We then replace $L_n$ by $\mu n$ in Lemma \ref{lem:g_n_epsilon_to_f_n_epsilon}, so that, conditioned on the degree sequence,
\[
\widehat{C}_n
	\approx \frac{6\sum_{1\leq i< j< k\leq n} f_{n, \varepsilon}(D_i, D_j, D_k)+O(\mathcal{L}(\sqrt{\mu n})^3n^{\frac{3}{2}(2-\gamma)})  \mathcal{E}_1(\varepsilon)}{\sum_{i = 1}^n \widehat{D}_i(\widehat{D}_i - 1)},
\]
with 
\[
	f_{n, \varepsilon}(x,y,z) = \left(1 - \me^{-\frac{xy}{\mu n}}\right)\left(1 - \me^{-\frac{yz}{\mu n}}\right)\left(1 - \me^{-\frac{zx}{\mu n}}\right) \ind{\varepsilon \sqrt{n} \le x, y, z \le \frac{\sqrt{n}}{\varepsilon}}.
\]
We then take the random degrees into account, by showing that
\[
	\frac{1}{\mathcal{L}(\sqrt{\mu n})^3n^{\frac{3}{2}(2-\gamma)}}\sum_{1\leq i< j< k\leq n}f_{n, \varepsilon}(D_1, D_2, D_3)
	\plim \frac{1}{6}\mu^{-\frac{3\gamma}{2}}A_\gamma(\varepsilon) + \mathscr{E}_2(\varepsilon)
\]
where
\[
	A_\gamma(\varepsilon) = \int_\varepsilon^{1/\varepsilon} \int_\varepsilon^{1/\varepsilon}  \int_\varepsilon^{1/\varepsilon}  \frac{1}{(xyz)^{\gamma + 1}} (1 -\me^{-xy}) (1 - \me^{-xy}) (1 -\me^{-xy}) \, \dd x \dd y \dd z
\]
and $\mathcal{E}_2(\varepsilon)$ is a deterministic error function, with $\lim_{\varepsilon \to 0} \mathscr{E}_2(\varepsilon) = 0$. 
Finally, we again replace the $\widehat{D}_i$ with $D_i$ and use the Stable Law CLT to obtain a slowly-varying function $\mathcal{L}_3$, such that
\[
	\frac{\mathcal{L}_3(n) n^{\frac{2}{\gamma}}}{\sum_{i = 1}^n \widehat{D}_i(\widehat{D}_i - 1)} \dlim \frac{1}{\mathcal{S}_{\gamma/2}}.
\]
Combining all these results implies that, for any $\varepsilon > 0$,
\[
	\frac{\mathcal{L}_3(n) \widehat{C}_n}{\mathcal{L}(\sqrt{\mu n})^3 n^{-\frac{3 \gamma}{2} + 3 - \frac{2}{\gamma}}}
	\dlim \mu^{-\frac{3\gamma}{2}}\frac{A_\gamma(\varepsilon)}{\mathcal{S}_{\gamma/2}^2} 
	+ \frac{\mathscr{E}_1(\varepsilon)+\mathcal{E}_2(\varepsilon)}{\mathcal{S}_{\gamma/2}^2},
\]
from which the result follows by taking $\varepsilon \downarrow0$.

\section{Pearson's correlation coefficient}\label{sec:pearson}

In this section we first give the proof of Theorem \ref{thm:clt_pearson_ecm}, where we follow the approach described in Section \ref{ssec:pearson_ecm_heuristics}. We then prove Lemma~\ref{lem-Pearon-CM-Var}, which supports Conjecture~\ref{conj-Pearson-CM} on the behavior of Pearson in the CM.

\subsection{Limit theorem for $r^-_n$}

We first prove a limit theorem for $r^-_n$, when $G_n = \texttt{CM}({\bf D}_n)$. Recall that
\[
	r^-_n = \frac{\frac{1}{L_n} \left(\sum_{i = 1}^n D_i^2\right)^2}{\sum_{i = 1}^n D_i^3 - 
			\frac{1}{L_n} \left(\sum_{i = 1}^n D_i^2\right)^2}.
\]

\begin{proposition}\label{prop:clt_pearson_negative_part}
Let ${\bf D}_n$ be sampled from $\mathscr{D}$ with $1< \gamma < 2$ and $\Exp{\mathscr{D}} = \mu$. Then, if $G_n = \emph{\texttt{CM}}({\bf D}_n)$, there exists a slowly-varying function $\mathcal{L}_0$ such that
\[
	\mu \mathcal{L}_0(n)n^{1-\frac{1}{\gamma}}r^-_n \dlim \frac{\mathcal{S}_{\gamma/2}^2}{\mathcal{S}_{\gamma/3}},
\]
as $n \to \infty$. Here $\mathcal{S}_{\gamma/2}$ and $\mathcal{S}_{\gamma/3}$ are given by \eqref{eq:def_S_gamma}.
\end{proposition}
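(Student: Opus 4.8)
The plan is to reduce the statement to the joint convergence of the empirical power sums $\sum_{i=1}^n D_i^2$ and $\sum_{i=1}^n D_i^3$ and then apply the continuous mapping theorem. First I would rewrite
\[
	\mu \mathcal{L}_0(n) n^{1 - \frac{1}{\gamma}} r^-_n
	= \mu \mathcal{L}_0(n) n^{1 - \frac{1}{\gamma}}
	\frac{\frac{1}{L_n}\left(\sum_{i=1}^n D_i^2\right)^2}
	{\sum_{i=1}^n D_i^3 - \frac{1}{L_n}\left(\sum_{i=1}^n D_i^2\right)^2},
\]
and immediately argue that the denominator is dominated by $\sum_{i=1}^n D_i^3$. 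Indeed, by Proposition~\ref{prop:scaling_sums_powers_regularly_varying} we have $\left(\sum_i D_i^2\right)^2 = \bigOp{n^{4/\gamma + \delta}}$ while $\sum_i D_i^3$ is of order $n^{3/\gamma}$; dividing by $L_n \approx \mu n$ gives that the subtracted term is $\op\bigl(n^{3/\gamma}\bigr)$, since $4/\gamma - 1 < 3/\gamma$ precisely because $\gamma < 1$ is false here—more carefully $4/\gamma - 1 < 3/\gamma \iff 1/\gamma < 1 \iff \gamma > 1$, which holds. Hence the denominator is $\sum_{i=1}^n D_i^3\,(1 + \op(1))$.

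The heart of the argument is the joint stable limit of the power sums. I would invoke the Stable Law CLT (as in \cite{whitt2006}) to obtain a single slowly-varying function $\mathcal{L}_0$ such that
\[
	\frac{\sum_{i=1}^n D_i^2}{\mathcal{L}_0(n) n^{2/\gamma}} \dlim \mathcal{S}_{\gamma/2}
	\quad\text{and}\quad
	\frac{\sum_{i=1}^n D_i^3}{\mathcal{L}_0'(n) n^{3/\gamma}} \dlim \mathcal{S}_{\gamma/3}.
\]
The key subtlety is that these two limits are built from the \emph{same} sequence of degrees, so they do not converge independently: both $\mathcal{S}_{\gamma/2}$ and $\mathcal{S}_{\gamma/3}$ are measurable functions of the same underlying Poisson/Gamma arrival sequence $(\Gamma_i)_{i\ge1}$ via the order statistics of the degrees, which is exactly why the limit is expressed through the common series representation \eqref{eq:def_S_gamma}. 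I would therefore establish \emph{joint} convergence
\[
	\left(\frac{\sum_{i=1}^n D_i^2}{\mathcal{L}_0(n) n^{2/\gamma}}, \,
	\frac{\sum_{i=1}^n D_i^3}{\mathcal{L}_0'(n) n^{3/\gamma}}\right)
	\dlim \left(\mathcal{S}_{\gamma/2}, \mathcal{S}_{\gamma/3}\right)
\]
by passing to the LePage-type representation, in which the largest order statistics $D_{(1)} \ge D_{(2)} \ge \dots$ rescaled by $n^{1/\gamma}$ converge jointly to $(\Gamma_i^{-1/\gamma})_{i\ge1}$, and both power sums are continuous functionals of this limiting point process.

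With the joint convergence in hand, I would apply the continuous mapping theorem to the map $(s,t) \mapsto s^2/t$, which is continuous on the relevant domain since $\mathcal{S}_{\gamma/3} > 0$ almost surely. Collecting the scaling factors and setting $\mathcal{L}_0(n)$ appropriately—absorbing the ratio $\mathcal{L}_0'(n)/\mathcal{L}_0(n)^2$ into the slowly-varying normalization and using $L_n = \mu n (1+\op(1))$ via Lemma~\ref{lem:Kolmogorov_SLLN}—yields the stated limit $\mathcal{S}_{\gamma/2}^2/\mathcal{S}_{\gamma/3}$. The main obstacle I anticipate is making the joint convergence rigorous: one must verify that the functional $\sum_i \Gamma_i^{-p/\gamma}$ is genuinely continuous (in the appropriate topology on the space of point configurations) and that the negligible small-degree contributions and the slowly-varying corrections do not interfere with passing to the joint limit. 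Everything else—the denominator reduction and the final continuous map—is routine once the coupled stable limit is established.
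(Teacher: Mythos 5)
Your proposal is correct and follows essentially the same route as the paper: joint stable-law convergence of $\bigl(\sum_i D_i^2, \sum_i D_i^3\bigr)$ built on the common sequence $(\Gamma_i)_{i\ge 1}$ (the paper cites \cite[Theorem 2.33]{VanDerHofstad2016a} for exactly the LePage-type joint limit you describe), followed by the continuous mapping theorem for $(s,t)\mapsto s^2/t$, the replacement $L_n = \mu n(1+\op(1))$ via Lemma~\ref{lem:Kolmogorov_SLLN}, and the observation that $\frac{1}{L_n}\bigl(\sum_i D_i^2\bigr)^2 = \op\bigl(\sum_i D_i^3\bigr)$ since $\gamma>1$. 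The only difference is that the paper proves the quantitative remainder bound $n^{1-\frac{1}{\gamma}+\frac{\kappa}{2}}\bigl|r_n^- - \bigl(\sum_i D_i^2\bigr)^2/\bigl(\mu n \sum_i D_i^3\bigr)\bigr| \plim 0$ rather than your qualitative $1+\op(1)$ reductions, a polynomial rate it needs later (e.g.\ in Proposition~\ref{prop:pearson_min_cm_vs_ecm}) to absorb slowly-varying factors via Potter's theorem, but for this proposition alone your argument suffices.
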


\begin{proof}
We will first show that there exists a slowly-varying function $\mathcal{L}_0$ such that
\begin{equation}\label{eq:clt_pearson_cm_negative_main}
	\mu\mathcal{L}_0(n)n^{1 - \frac{1}{\gamma}} \frac{\left(\sum_{i = 1}^n 
	D_i^2\right)^2}{\mu n \sum_{i = 1}^n D_i^3} \dlim 
	\frac{\mathcal{S}_{\gamma/2}^2}{\mathcal{S}_{\gamma/3}},
\end{equation}
as $n \to \infty$ and $\mathcal{S}_{\gamma/2}$ and $\mathcal{S}_{\gamma/3}$ defined by \eqref{eq:def_S_gamma}. 

Let $D_{(i)}$ denote the $i$-th largest degree in ${\bf D}_n$, i.e. $D_{(1)} \ge D_{(2)} \ge \dots D_{(n)}$, and let 
$\Gamma_i$ be defined as in \eqref{eq:def_Gamma}.
Then, since $\sum_{i = 1}^n D_{(i)}^p = \sum_{i = 1}^n D_i^p$, for any $p \ge 0$, it follows from \cite[Theorem 2.33]{VanDerHofstad2016a} that for some slowly-varying functions $\mathcal{L}_2$, $\mathcal{L}_3$, $\mathcal{L}_4$ and $\mathcal{L}_6$, as $n \to \infty$,

\begin{equation}\label{eq:joint_convegrence_degree_sums}
\begin{aligned}
	&\hspace{-30pt}\left(\frac{n^{-\frac{4}{\gamma}}}{\mathcal{L}_2(n)^2} \left(\sum_{i = 1}^n D_i^2\right)^2, \, 
		\frac{n^{-\frac{3}{\gamma}}}{\mathcal{L}_3(n)} \sum_{i = 1}^n D_i^3, \,
		\frac{n^{-\frac{4}{\gamma}}}{\mathcal{L}_4(n)}\sum_{i = 1}^n D_i^4, \,
		\frac{n^{-\frac{6}{\gamma}}}{\mathcal{L}_6(n)}\sum_{i = 1}^n D_i^6\right)\\
	&\hspace{30pt}\dlim \left(\left(\sum_{i = 1}^\infty \Gamma_i^{-\frac{2}{\gamma}}\right)^2, \, 
		\sum_{i = 1}^\infty \Gamma_i^{-\frac{3}{\gamma}}, \,
		\sum_{i = 1}^\infty \Gamma_i^{-\frac{4}{\gamma}}, \,
		\sum_{i = 1}^\infty \Gamma_i^{-\frac{6}{\gamma}}\right).
\end{aligned}
\end{equation}
Here we include the fourth and sixth moment, since these will be needed later for proving Theorem~\ref{thm:CCM}.

Note that $\mathcal{L}_0(n) := \mathcal{L}_2(n)/\mathcal{L}_1(n)^2$ is slowly varying and $\Prob{\sum_{i = 1}^\infty \Gamma_i^{-t/\gamma} \le 0} = 0$ for any $t \ge 2$. Hence \eqref{eq:clt_pearson_cm_negative_main} follows from~\eqref{eq:joint_convegrence_degree_sums} and the continuous mapping theorem. Hence, to prove the main result, it is enough to show that
\[
	\mathcal{L}_0(n)n^{1 - \frac{1}{\gamma}}\left|r^-_n - \frac{\left(\sum_{i = 1}^n D_i^2\right)^2}{\mu n \sum_{i = 1}^n D_i^3}\right| \plim 0.
\]

We will prove the stronger statement, 
\begin{equation}\label{eq:clt_pearson_cm_negative_remainder}
	n^{1 - \frac{1}{\gamma} + \frac{\kappa}{2}}\left|r^-_n - \frac{\left(\sum_{i = 1}^n D_i^2\right)^2}{\mu n \sum_{i = 1}^n D_i^3}\right| \plim 0.
\end{equation}
where $\kappa = (\gamma - 1)/(\gamma + 1) > 0$ is the same as in Lemma \ref{lem:Kolmogorov_SLLN}.

Note that by Lemma \ref{lem:Kolmogorov_SLLN} we have that $\mu n/ L_n \plim 1$. Hence, by \eqref{eq:clt_pearson_cm_negative_main}, we have that for any $\delta > 0$,
\[
	n^{1 - \frac{1}{\gamma} - \delta}\frac{\left(\sum_{i = 1}^n D_i^2\right)^2}{L_n \sum_{i = 1}^n D_i^3} 
	=  n^{1 - \frac{1}{\gamma} - \delta} \frac{\left(\sum_{i = 1}^n D_i^2\right)^2}{\mu n \sum_{i = 1}^n D_i^3}\left(\frac{\mu n}{L_n}\right) \plim 0,
\]
from which we conclude that
\begin{equation}\label{eq:clt_pearson_cm_negative_fraction}
	n^{1 - \frac{1}{\gamma} - \delta} \frac{\left(\sum_{i = 1}^n D_i^2\right)^2}{L_n \sum_{i = 1}^n D_i^3
	- \left(\sum_{i = 1}^n D_i^2\right)^2} \plim 0.
\end{equation}

To show \eqref{eq:clt_pearson_cm_negative_remainder}, we write
\begin{align*}
	\hspace{-30pt}\left|\frac{\left(\sum_{i = 1}^n D_i^2\right)^2}{\mu n\sum_{i = 1}^n D_i^3} 
		- r^-_n\right|
	&= \frac{\left(\sum_{i = 1}^n D_i^2\right)^2}{L_n \sum_{i = 1}^n D_i^3}
		\left|\frac{(L_n - \mu n)\sum_{i = 1}^n D_i^3 + \mu n \left(\sum_{i = 1}^n D_i^2\right)^2}
		{\mu n \left(L_n \sum_{i = 1}^n D_i^3 - \left(\sum_{i = 1}^n D_i^2\right)^2\right)}\right|\\
	&\le \frac{\left(\sum_{i = 1}^n D_i^2\right)^2}{L_n \sum_{i = 1}^n D_i^3 
		- \left(\sum_{i = 1}^n D_i^2\right)^2}\frac{\left|\mu n - L_n\right|}{\mu n} \\
	&\hspace{10pt}+ \left(\frac{\left(\sum_{i = 1}^n D_i^2\right)^2}{L_n \sum_{i = 1}^n D_i^3 
		- \left(\sum_{i = 1}^n D_i^2\right)^2}\right)^2.
\end{align*}
For the first term we have, using \eqref{eq:clt_pearson_cm_negative_fraction} and Lemma \ref{lem:Kolmogorov_SLLN},
\begin{align*}
	&\hspace{-30pt}n^{1 - \frac{1}{\gamma} + \frac{\kappa}{2}}\frac{\left(\sum_{i = 1}^n D_i^2\right)^2}
		{L_n \sum_{i = 1}^n D_i^3 - \left(\sum_{i = 1}^n D_i^2\right)^2}
		\frac{\left|\mu n - L_n\right|}{\mu n}\\
	&= n^{1 - \frac{1}{\gamma} - \frac{\kappa}{2}}\left(\frac{\left(\sum_{i = 1}^n D_i^2\right)^2}
		{L_n \sum_{i = 1}^n D_i^3 - \left(\sum_{i = 1}^n D_i^2\right)^2}\right)
		\left(\frac{\left|\mu n - L_n\right|}{\mu n^{1 - \kappa}}\right) \plim 0.
\end{align*}
Now, let $\delta = 1 - 1/\gamma - \kappa/2 > 0$. Then, since $1 - 1/\gamma + \kappa/2 = 2 - 2/\gamma - \delta$, 
it follows from \eqref{eq:clt_pearson_cm_negative_fraction} that
\[
	n^{1 - \frac{1}{\gamma} + \frac{\kappa}{2}}\left(\frac{\left(\sum_{i = 1}^n D_i^2\right)^2}{L_n \sum_{i = 1}^n D_i^3 
			- \left(\sum_{i = 1}^n D_i^2\right)^2}\right)^2
	= \left(\frac{n^{1 - \frac{1}{\gamma} - \frac{\delta}{2}} \left(\sum_{i = 1}^n D_i^2\right)^2}{L_n \sum_{i = 1}^n D_i^3 
				- \left(\sum_{i = 1}^n D_i^2\right)^2}\right)^2 \plim 0,
\]
which finishes the proof of \eqref{eq:clt_pearson_cm_negative_remainder}.
\end{proof}

\subsection{Limit theorem for $\widehat{r}^{\,-}_n$}

We now turn to the \texttt{ECM}. Observe that for $\widehat{G}_n = \texttt{ECM}({\bf D}_n)$, (\ref{eq:sum_Der_sq-bound}) and Corollary \ref{cor:scaling_degrees_erased_edges} that, for any $\delta > 0$,
\[
	\frac{\left|\sum_{i = 1}^n D_i^2 - \sum_{i = 1}^n \widehat{D}_i^2\right|}{n^{\frac{1}{\gamma} + 2 - \gamma + \delta}}
	\le \frac{2\sum_{i = 1}^n D_i Y_i}{n^{\frac{1}{\gamma} + 2 - \gamma + \delta}} \plim 0.
\]
 Since $\frac{2}{\gamma} > \frac{1}{\gamma} + 2 - \gamma$ for all $\gamma > 1$ this result implies that for any $\delta > 0$,

\begin{equation}\label{eq:Dhatsq}
	\frac{\sum_{i = 1}^n \widehat{D}_i^2}{n^{\frac{2}{\gamma} + \delta}} \plim 0.
\end{equation}
This line of reasoning can be extended to sums of $\widehat{D}_i^p$, for any $p > 0$,
proving that the degrees $\widehat{D}_i$ in the ECM satisfy the same scaling results as those for $D_i$. In particular we have the following extension of \eqref{eq:clt_pearson_cm_negative_fraction} to the erased configuration model. Recall that $\mathscr{D}$ denotes an integer-valued random variable with a regularly-varying distribution defined by (see~\eqref{eq:distribution_degrees})
\[
	\Prob{\mathscr{D} > t} = \mathcal{L}(t) t^{-\gamma.}
\]

\begin{lemma}\label{lem:scaling_pearson_ecm_main_fraction}
Let ${\bf D}_n$ be sampled from $\mathscr{D}$ with $1 < \gamma < 3$ and $\widehat{G}_n = \emph{\texttt{ECM}}({\bf D}_n)$.
Then, for any $\delta > 0$,
\[
	n^{1 - \frac{1}{\gamma} - \delta} \frac{\left(\sum_{i = 1}^n \widehat{D}_i^2\right)^2}{L_n \sum_{i = 1}^n \widehat{D}_i^3
		- \left(\sum_{i = 1}^n \widehat{D}_i^2\right)^2} \plim 0.
\]
\end{lemma}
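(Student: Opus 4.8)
The plan is to reduce the statement to its \texttt{CM} counterpart, the estimate~\eqref{eq:clt_pearson_cm_negative_fraction}, by showing that the relevant power sums of the erased degrees are controlled by those of the original degrees. Write $A_n = \big(\sum_{i=1}^n D_i^2\big)^2$ and $B_n = L_n \sum_{i=1}^n D_i^3$, and let $\widehat A_n, \widehat B_n$ denote the same expressions with $D_i$ replaced by $\widehat D_i$. The guiding scalings are $A_n \asymp n^{4/\gamma}$ and $B_n \asymp n^{1+3/\gamma}$ up to slowly-varying factors, so that $A_n = \op(B_n)$ precisely because $4/\gamma < 1 + 3/\gamma$ for $\gamma > 1$; this is the fact already recorded in Section~\ref{sec:int_conj}. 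The denominator $B_n - A_n$ is then comparable to $B_n$, the whole fraction behaves like $A_n/B_n \asymp n^{1/\gamma - 1}$, and this is exactly killed by the prefactor $n^{1 - 1/\gamma - \delta}$. The task is to show the erased version inherits this behaviour.

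The one perturbation estimate needed is for the cube sum. By the mean-value bound recorded after Corollary~\ref{cor:scaling_degrees_erased_edges}, for any $\delta>0$,
\[
	\Big|\sum_{i=1}^n D_i^3 - \sum_{i=1}^n \widehat D_i^3\Big| = \op\big(n^{\frac{2}{\gamma} + 2 - \gamma + \delta}\big),
\]
while Proposition~\ref{prop:scaling_sums_powers_regularly_varying}(ii) with $p=3$ gives $\sum_{i=1}^n D_i^3 \gtrsim n^{3/\gamma - \delta}$ with high probability; this is where the hypothesis $\gamma < 3$ is used. Since
\[
	\tfrac{2}{\gamma} + 2 - \gamma < \tfrac{3}{\gamma} \iff 2 - \gamma < \tfrac1\gamma \iff (\gamma-1)^2 > 0,
\]
which holds for all $\gamma \ne 1$, the relative error vanishes, so that $\sum_{i=1}^n \widehat D_i^3 = \big(\sum_{i=1}^n D_i^3\big)(1+\op(1))$ and hence $\widehat B_n = B_n(1+\op(1))$. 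For the numerator I deliberately avoid the analogous equivalence (which would require the matching second-moment lower bound and thus $\gamma<2$): the trivial bound $\widehat A_n \le A_n$, valid because erasure only lowers degrees, is all that is required.

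It remains to transfer the bound through the difference in the denominator, which is the only delicate point, since a difference can in principle suffer cancellation. Here no cancellation occurs: combining $\widehat A_n \le A_n$ with $A_n = \op(B_n)$ and $\widehat B_n = B_n(1+\op(1))$ gives $\widehat B_n - \widehat A_n \ge \widehat B_n - A_n = B_n(1+\op(1)) \ge \tfrac12 B_n$ with high probability. Therefore, with high probability,
\[
	\frac{\widehat A_n}{\widehat B_n - \widehat A_n} \le \frac{2 A_n}{B_n} \le \frac{2 A_n}{B_n - A_n},
\]
and multiplying by $n^{1 - 1/\gamma - \delta}$ and invoking~\eqref{eq:clt_pearson_cm_negative_fraction} finishes the proof. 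The main obstacle is exactly this last comparison: one must guarantee that the erasure correction quantified by Corollary~\ref{cor:scaling_degrees_erased_edges} stays below the genuine size $n^{1+3/\gamma}$ of the dominant term $L_n \sum_i \widehat D_i^3$, rather than below the smaller and potentially cancelling combination $B_n - A_n$. The chain of inequalities above sidesteps this by bounding $\widehat A_n$ above by $A_n$ and $\widehat B_n - \widehat A_n$ below by a fixed fraction of $B_n$, so that the estimate never relies on controlling a near-cancellation of two comparably sized quantities.
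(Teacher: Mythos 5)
Your argument is, in substance, the proof the paper intends but never writes out: the paper dismisses this lemma with the remark following \eqref{eq:Dhatsq} that the erased power sums inherit the scaling of the original ones, and your implementation --- transferring the cube sum via the mean-value bound and Corollary~\ref{cor:scaling_degrees_erased_edges}, bounding the numerator by the monotonicity $\widehat{A}_n \le A_n$, and excluding cancellation in the denominator through $A_n = \op(B_n)$ --- is a correct and complete way of doing exactly that on $1 < \gamma < 2$, the only regime in which the lemma is ever invoked (in Proposition~\ref{prop:pearson_ecm_positive_part}). The observation that the crude bound $\widehat{A}_n \le A_n$ suffices for the numerator, so that no lower bound on $\sum_i \widehat{D}_i^2$ is needed, is a genuine economy over the paper's ``same scaling in both models'' sketch.

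The one genuine flaw is your claim to cover the stated range $1 < \gamma < 3$. Your closing step invokes \eqref{eq:clt_pearson_cm_negative_fraction}, but that display is established inside the proof of Proposition~\ref{prop:clt_pearson_negative_part} under the hypothesis $1 < \gamma < 2$, and it truly needs it: its derivation rests on the stable-law scaling $\sum_i D_i^2 \asymp n^{2/\gamma}$ from \eqref{eq:joint_convegrence_degree_sums}, which fails for $\gamma > 2$, where $\sum_i D_i^2 \asymp n$ by the law of large numbers. Worse, for $\gamma \in (2,3)$ the lemma itself is false: there $A_n \asymp n^2$ while $B_n = L_n \sum_i D_i^3 \asymp n^{1 + 3/\gamma}$ up to slowly-varying factors, so $n^{1 - \frac{1}{\gamma} - \delta} A_n/(B_n - A_n) \asymp n^{2 - \frac{4}{\gamma} - \delta} \to \infty$ for $\delta$ small, since $2 - 4/\gamma > 0$ precisely when $\gamma > 2$; and because for $\gamma > 2$ the number of erased edges stays bounded (as noted in Section~\ref{ssec:results}), the hatted quantity diverges in the same way. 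So the ``$3$'' in the statement is evidently a typo for ``$2$'' (consistent with every surrounding result), and your deliberate avoidance of $\gamma<2$-dependent steps in the numerator --- while your verification that $A_n = \op(B_n)$ and the cube-sum transfer do hold up to $\gamma < 3$ is accurate --- cannot rescue the extended range; you should state the proof for $1 < \gamma < 2$ and note that the statement fails beyond it.
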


Now recall that $\widehat{r}^{\,-}_n$ denotes the negative part of Pearson's correlation coefficient for the erased configuration model, i.e
\[
	\widehat{r}^{\, -}_n = \frac{\frac{1}{L_n} \left(\sum_{i = 1}^n \widehat{D}_i^2\right)^2}{\sum_{i = 1}^n \widehat{D}_i^3 - \frac{1}{\widehat{L}_n} \left(\sum_{i = 1}^n \widehat{D}_i^2\right)^2}.
\]
The next proposition shows that $|r^{-}_n-\widehat{r}^{\,-}_n| = o_\pr\left(n^{\frac{1}{\gamma} - 1}\right)$.

\begin{proposition}\label{prop:pearson_min_cm_vs_ecm}
Let ${\bf D}_n$ be sampled from $\mathscr{D}$ with $1< \gamma < 2$. Let
$\widehat{G}_n = \emph{\texttt{ECM}}({\bf D}_n)$, denote by $G_n$ the graph before the removal of edges and recall that $r_n^-$ and $\widehat{r}^{\,-}_n$ denote the negative part of Pearson's correlation coefficient in $G_n$ and $\widehat{G}_n$, respectively. Then, 
\[
	n^{1 - \frac{1}{\gamma} + \frac{(\gamma-1)^2}{4 \gamma}}\left|r^{-}_n - \widehat{r}^{\,-}_n\right| \plim 0.
\]
\end{proposition}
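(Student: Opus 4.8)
The plan is to bound the difference $|r^-_n - \widehat{r}^{\,-}_n|$ by comparing the numerators and denominators of the two expressions, controlling each error via the scaling results of Corollary~\ref{cor:scaling_degrees_erased_edges}. Write $S^{(p)}_n = \sum_{i=1}^n D_i^p$ and $\widehat{S}^{(p)}_n = \sum_{i=1}^n \widehat{D}_i^p$, and recall from the mean-value-theorem bound following Corollary~\ref{cor:scaling_degrees_erased_edges} that, for any $q \ge 1$ and $\delta > 0$, $|S^{(q)}_n - \widehat{S}^{(q)}_n| = o_\pr(n^{\frac{q-1}{\gamma} + 2 - \gamma + \delta})$. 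The key observation is that the erased sums satisfy exactly the same scaling as the original ones: $\widehat{S}^{(2)}_n = \Theta_\pr(n^{2/\gamma})$ and $\widehat{S}^{(3)}_n = \Theta_\pr(n^{3/\gamma})$ up to slowly-varying factors, since the correction $n^{\frac{q-1}{\gamma} + 2 - \gamma}$ is of strictly smaller order than $n^{q/\gamma}$ whenever $q/\gamma > \frac{q-1}{\gamma} + 2 - \gamma$, i.e. $\frac{1}{\gamma} > 2 - \gamma$, which holds for all $1 < \gamma < 2$.

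First I would write $r^-_n$ and $\widehat{r}^{\,-}_n$ over a common framework by treating the ratio $N_n/M_n$ with $N_n = \frac{1}{L_n}(S^{(2)}_n)^2$ and denominator $M_n = S^{(3)}_n - \frac{1}{L_n}(S^{(2)}_n)^2$, and similarly $\widehat{N}_n, \widehat{M}_n$ for the erased quantities. Using the elementary identity
\[
	\left|\frac{N_n}{M_n} - \frac{\widehat{N}_n}{\widehat{M}_n}\right|
	\le \frac{|N_n - \widehat{N}_n|}{M_n} + \frac{\widehat{N}_n}{\widehat{M}_n}\frac{|M_n - \widehat{M}_n|}{M_n},
\]
I would bound each of the three ingredients. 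The numerator difference $|N_n - \widehat{N}_n|$ is controlled by factoring $(S^{(2)}_n)^2 - (\widehat{S}^{(2)}_n)^2 = (S^{(2)}_n - \widehat{S}^{(2)}_n)(S^{(2)}_n + \widehat{S}^{(2)}_n)$ and applying the $q=2$ bound together with $S^{(2)}_n + \widehat{S}^{(2)}_n = O_\pr(n^{2/\gamma})$; the $1/L_n$ factor contributes $\Theta(n^{-1})$ after replacing $L_n$ by $\mu n$ via Lemma~\ref{lem:Kolmogorov_SLLN}. The denominator difference $|M_n - \widehat{M}_n|$ combines the $q=3$ bound on $|S^{(3)}_n - \widehat{S}^{(3)}_n|$ with the numerator estimate just obtained. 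For the lower bounds on $M_n$ and $\widehat{M}_n$ I would use \eqref{eq:clt_pearson_cm_negative_fraction} and its erased counterpart Lemma~\ref{lem:scaling_pearson_ecm_main_fraction}, which guarantee $M_n, \widehat{M}_n = \Theta_\pr(n^{3/\gamma})$ up to slowly-varying corrections.

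Assembling these estimates, the dominant error term after multiplying by $n^{1 - 1/\gamma}$ scales like $n^{1 - \frac{1}{\gamma}} \cdot n^{\frac{2}{\gamma} + 2 - \gamma} / n^{3/\gamma} = n^{3 - \gamma - \frac{2}{\gamma}}$ (up to the arbitrarily small $\delta$ and slowly-varying factors), and a direct computation shows $3 - \gamma - \frac{2}{\gamma} = -\frac{(\gamma-1)^2}{\gamma} + \bigl(1 - \tfrac{1}{\gamma}\bigr) - \bigl(1 - \tfrac{1}{\gamma}\bigr)$; the precise bookkeeping is what produces the exponent $\frac{(\gamma-1)^2}{4\gamma}$ of spare decay claimed in the statement, with the factor $1/4$ arising from splitting the $\delta$-budget so that the slowly-varying factors and the $n^\delta$ slack are absorbed. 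I expect the main obstacle to be exactly this bookkeeping: one must verify that every error exponent is strictly smaller than $1/\gamma - 1$ by a margin that survives after accounting for the slowly-varying functions and the extra $n^\delta$ factors from Corollary~\ref{cor:scaling_degrees_erased_edges}, and that the chosen $\delta$ can be taken small enough (depending only on $\gamma$) to leave the stated positive surplus $\frac{(\gamma-1)^2}{4\gamma}$. Verifying the worst-case exponent is indeed the denominator-correction term and confirming it dominates the numerator term uniformly over $(1,2)$ is the delicate point; everything else reduces to the regularly-varying scaling already established in Proposition~\ref{prop:scaling_sums_powers_regularly_varying} and Corollary~\ref{cor:scaling_degrees_erased_edges}.
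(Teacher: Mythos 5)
Your plan follows the paper's proof essentially step for step: the same splitting of $|r^-_n-\widehat r^{\,-}_n|$ into a numerator-difference term (the paper's \eqref{eq:pearson_min_cm_vs_ecm_D2}) and a term of the form $\frac{\widehat N_n}{\widehat M_n}\cdot\frac{|M_n-\widehat M_n|}{M_n}$ (the paper's \eqref{eq:pearson_min_cm_vs_ecm_remainder}, expanded there as $I_n^{(1)}+I_n^{(2)}+I_n^{(3)}$), controlled by the same inputs: Corollary \ref{cor:scaling_degrees_erased_edges}, Proposition \ref{prop:scaling_sums_powers_regularly_varying}, Lemma \ref{lem:Kolmogorov_SLLN}, and the fraction bounds \eqref{eq:clt_pearson_cm_negative_fraction} and Lemma \ref{lem:scaling_pearson_ecm_main_fraction}. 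However, the final exponent computation---the step you yourself single out as the delicate point---is wrong as written. You bound the dominant error by $n^{1-\frac{1}{\gamma}}\cdot n^{\frac{2}{\gamma}+2-\gamma}/n^{3/\gamma}=n^{3-\gamma-\frac{2}{\gamma}}$, thereby dropping the prefactor $\widehat N_n/\widehat M_n$, which is of order $n^{\frac{1}{\gamma}-1}$ and appears in your own decomposition inequality. This is not cosmetic: $3-\gamma-\frac{2}{\gamma}=(\gamma-1)(2-\gamma)/\gamma>0$ for every $\gamma\in(1,2)$, so the ``dominant error'' you exhibit diverges, and the identity you assert, $3-\gamma-\frac{2}{\gamma}=-\frac{(\gamma-1)^2}{\gamma}+\bigl(1-\tfrac{1}{\gamma}\bigr)-\bigl(1-\tfrac{1}{\gamma}\bigr)$, is false---the right-hand side equals $2-\gamma-\frac{1}{\gamma}$, and the two sides differ by exactly $1-\frac{1}{\gamma}$, the factor you lost. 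Restoring it gives $n^{1-\frac{1}{\gamma}}\cdot n^{\frac{1}{\gamma}-1}\cdot n^{\frac{2}{\gamma}+2-\gamma-\frac{3}{\gamma}}=n^{2-\gamma-\frac{1}{\gamma}}=n^{-(\gamma-1)^2/\gamma}$, which is precisely the paper's key identity \eqref{eq:pearson_min_ecm_vs_cm_4delta}: with $\delta=(\gamma-1)^2/(4\gamma)$ one has $2-\tfrac{1}{\gamma}-\gamma=-4\delta$, and the stated surplus is one quarter of this margin, the rest being spent on the slowly-varying factors and the small exponents from the corollary, exactly as you anticipated.

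A second, smaller omission: both $N_n-\widehat N_n$ and $M_n-\widehat M_n$ contain the discrepancy between $\frac{1}{L_n}\bigl(\sum_{i=1}^n D_i^2\bigr)^2$ and $\frac{1}{\widehat L_n}\bigl(\sum_{i=1}^n \widehat D_i^2\bigr)^2$, and since $\bigl|\tfrac{1}{L_n}-\tfrac{1}{\widehat L_n}\bigr|=2Z_n/(L_n\widehat L_n)$, replacing $L_n$ by $\mu n$ via Lemma \ref{lem:Kolmogorov_SLLN} is not sufficient: you need the bound $Z_n=\op(n^{2-\gamma+\delta})$ of Theorem \ref{thm:scaling_erased_edges} (equivalently the $p=0$ case of Corollary \ref{cor:scaling_degrees_erased_edges}, since $2Z_n=\sum_{i=1}^n Y_i$). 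This is exactly what the paper's term $I_n^{(2)}$ handles, via $Z_n/L_n$ times a factor converging to one; it contributes order $n^{1-\gamma+\delta}$ before the $n^{1-\frac{1}{\gamma}}$ rescaling, again within the $4\delta$ budget. With these two repairs your argument closes and coincides with the paper's proof of Proposition \ref{prop:pearson_min_cm_vs_ecm}.
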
 

\begin{proof}
The proof consist of splitting the main term into separate terms, which can be expressed in terms of erased stubs or edges, and showing that each of these terms converge to zero. Throughout the proof we let
\[
	\delta = \frac{(\gamma - 1)^2}{4\gamma}.
\]

We start by splitting the main term as follows,
\begin{align}
	\left|r^{-}_n - \widehat{r}^{\,-}_n\right| 
	&\le \frac{\left|\left(\sum_{i = 1}^n D_i^2\right)^2 - \left(\sum_{i = 1}^n \widehat{D}_i^2\right)^2\right|}
		{L_n \sum_{i = 1}^n D_i^3 - \left(\sum_{i = 1}^n D_i^2\right)^2} \label{eq:pearson_min_cm_vs_ecm_D2}\\
	&\hspace{-30pt}+ \left(\sum_{i = 1}^n \widehat{D}_i^2\right)^2\left|\frac{1}{L_n \sum_{i = 1}^n D_i^3 - \left(\sum_{i = 1}^n D_i^2\right)^2} - \frac{1}{\widehat{L}_n \sum_{i = 1}^n \widehat{D}_i^3 - \left(\sum_{i = 1}^n \widehat{D}_i^2\right)^2}\right|. \label{eq:pearson_min_cm_vs_ecm_remainder}
\end{align}

For \eqref{eq:pearson_min_cm_vs_ecm_D2} we use that
\begin{align*}
	\left|\left(\sum_{i = 1}^n D_i^2\right)^2 - \left(\sum_{i = 1}^n \widehat{D}_i^2\right)^2\right|
	&= \left(\sum_{i = 1}^n D_i^2 - \widehat{D}_i^2\right)
		\left(\sum_{i = 1}^n D_i^2 + \widehat{D}_i^2\right)\\
	&\le \left(\sum_{i = 1}^n 2 D_i Y_i\right)\left(\sum_{i = 1}^n 2D_i^2 + Y_i^2\right)
		\le 6 \sum_{i = 1}^n D_i^2 \sum_{j = 1}^n Y_j D_j,
\end{align*}
to obtain
\begin{align*}
	\frac{\left|\left(\sum_{i = 1}^n D_i^2\right)^2 - \left(\sum_{i = 1}^n \widehat{D}_i^2\right)^2\right|}
		{L_n \sum_{i = 1}^n D_i^3 - \left(\sum_{i = 1}^n D_i^2\right)^2}
	&\le \frac{6 \sum_{i = 1}^n D_i^2 \sum_{i = 1}^n Y_i D_i}
		{L_n \sum_{i = 1}^n D_i^3 - \left(\sum_{i = 1}^n D_i^2\right)^2} \\
	&= \frac{6 \sum_{i = 1}^n Y_i D_i}{\sum_{i = 1}^n D_i^2} \frac{\left(\sum_{i = 1}^n D_i^2\right)^2}
		{L_n \sum_{i = 1}^n D_i^3 - \left(\sum_{i = 1}^n D_i^2\right)^2}. \mumberthis \label{eq:pearson_min_cm_vs_ecm_D2_bound}
\end{align*}
Now observe that
\begin{equation}\label{eq:pearson_min_ecm_vs_cm_4delta}
	2 - \frac{1}{\gamma} - \gamma = -(\gamma - 1)^2/\gamma = -4\delta,
\end{equation}
and hence
\begin{align*}
	1 - \frac{1}{\gamma} + \delta = \gamma - 1 -3\delta
	= -\left(\frac{1}{\gamma} + 2 -\gamma + \delta\right)
	+ \left(\frac{2}{\gamma} - \frac{\delta}{2}\right) +\left(1 - \frac{1}{\gamma} - \frac{\delta}{2}\right)
	- \delta,
\end{align*}
with all the three terms inside the brackets positive. 
Therefore, it follows from \eqref{eq:pearson_min_cm_vs_ecm_D2_bound}, together with Corollary \ref{cor:scaling_degrees_erased_edges}, Proposition \ref{prop:scaling_sums_powers_regularly_varying} and \eqref{eq:clt_pearson_cm_negative_fraction} that
\begin{align*}
	&n^{1 -\frac{1}{\gamma} + \delta}\frac{\left|\left(\sum_{i = 1}^n D_i^2\right)^2 - \left(\sum_{i = 1}^n 	
		\widehat{D}_i^2\right)^2\right|}{L_n \sum_{i = 1}^n D_i^3 - \left(\sum_{i = 1}^n D_i^2\right)^2}\\
	&\le n^{-\delta}\left(\frac{6\sum_{i = 1}^n D_i Y_i}{n^{\frac{1}{\gamma} + 2 - \gamma + \delta}}\right)
		\left(\frac{n^{\frac{2}{\gamma} - \frac{\delta}{2}}}{\sum_{i = 1}^n D_i^2}\right)
		\left( \frac{n^{1 - \frac{1}{\gamma} - \frac{\delta}{2}}\left(\sum_{i = 1}^n D_i^2\right)^2}
		{L_n \sum_{i = 1}^n D_i^3 - \left(\sum_{i = 1}^n D_i^2\right)^2}\right) \plim 0.
		\mumberthis \label{eq:pearson_min_cm_vs_ecm_D2_convergence}
\end{align*}

The second term, (\ref{eq:pearson_min_cm_vs_ecm_remainder}), requires more work. Let us first write
\begin{align*}
	&\left(\sum_{i = 1}^n \widehat{D}_i^2\right)^2
		\left|\frac{1}{L_n \sum_{i = 1}^n D_i^3 - \left(\sum_{i = 1}^n D_i^2\right)^2} 
		- \frac{1}{\widehat{L}_n \sum_{i = 1}^n \widehat{D}_i^3 - \left(\sum_{i = 1}^n \widehat{D}_i^2\right)^2}\right|\\
	&:= \frac{\left(\sum_{i = 1}^n \widehat{D}_i^2\right)^2}{\widehat{L}_n \sum_{i = 1}^n \widehat{D}_i^3 
			- \left(\sum_{i = 1}^n \widehat{D}_i^2\right)^2} \left(I_n^{(1)} + I_n^{(2)} + I_n^{(3)}\right),
\end{align*}
with 
\begin{align*}
	I_n^{(1)} &:=  \frac{\left(\sum_{i = 1}^n D_i^2\right)^2 - \left(\sum_{i = 1}^n \widehat{D}_i^2\right)^2}
		{L_n \sum_{i = 1}^n D_i^3 - \left(\sum_{i = 1}^n D_i^2\right)^2}\\
	I_n^{(2)} &:= \frac{Z_n \sum_{i = 1}^n \widehat{D}_i^3}{L_n \sum_{i = 1}^n D_i^3 - \left(\sum_{i = 1}^n D_i^2\right)^2}\\
	I_n^{(3)} &:= \frac{L_n \left|\sum_{i = 1}^n D_i^3 - \sum_{i = 1}^n \widehat{D}_i^3\right|}
				{L_n \sum_{i = 1}^n D_i^3 - \left(\sum_{i = 1}^n D_i^2\right)^2},
\end{align*}
and we recall that $Z_n = L_n - \widehat{L}_n$ denotes the total number of removed edges.
Note that
\[
	n^{1 - \frac{1}{\gamma} - \frac{\delta}{2}}\frac{\left(\sum_{i = 1}^n \widehat{D}_i^2\right)^2}{\widehat{L}_n \sum_{i = 1}^n \widehat{D}_i^3 - \left(\sum_{i = 1}^n \widehat{D}_i^2\right)^2} \plim 0.
\]
Therefore, in order to complete the proof, it suffices to show that
\[
	n^{\frac{3\delta}{2}} I_n^{(t)} \plim 0, \quad \text{for } t = 1,2, 3. 
\]
For $t=1$ this follows from \eqref{eq:pearson_min_cm_vs_ecm_D2_convergence}, since,
\[
	\frac{1}{\gamma} - 1 + \frac{\delta}{2} = \frac{\gamma^2 - 10\gamma + 9}{8 \gamma} = \frac{(\gamma - 1)(\gamma - 9)}{8\gamma} < 0,
\]
and hence
\[
	\frac{3\delta}{2} < 1 - \frac{1}{\gamma} + \delta.
\]

For $t = 2$ we use that $\widehat{D}_i \le D_i$ to obtain,
\begin{align*}
	I_n^{(2)}&\le \frac{E_n \sum_{i = 1}^n {D}_i^3}{{L}_n \sum_{i = 1}^n {D}_i^3 
		- \left(\sum_{i = 1}^n D_i^2\right)^2}
	= \frac{E_n}{L_n}\left(1 - \frac{\left(\sum_{i = 1}^n D_i^2\right)^2}{L_n \sum_{i = 1}^n {D}_i^3}\right)^{-1}
\end{align*}
By Proposition \ref{prop:clt_pearson_negative_part} it follows that
\begin{equation}\label{eq:convergence_pearson_negaitve_fraction}
	\left(1 - \frac{\left(\sum_{i = 1}^n D_i^2\right)^2}{L_n \sum_{i = 1}^n {D}_i^3}\right)^{-1} \plim 1.
\end{equation}
In addition we have that $\varepsilon := \gamma - 1 - 3\delta/2 > 0$ and hence, by Theorem \ref{thm:scaling_erased_edges} and the strong law of large numbers,
\[
	n^{\frac{3\delta}{2}}I_n^{(2)} \le \left(n^{\gamma - 2 - \varepsilon} \frac{E_n}{\mu}\right) \frac{\mu n}{L_n}
	\left(1 - \frac{\left(\sum_{i = 1}^n D_i^2\right)^2}{L_n \sum_{i = 1}^n {D}_i^3}\right)^{-1} \plim 0.
\]
Finally, for $I_n^{(3)}$ we first compute
\[
	\left|\sum_{i = 1}^n D_i^3 - \sum_{i = 1}^n \widehat{D}_i^3\right| 
	= \sum_{i = 1}^n Y_i^3 + 3D_i^2 Y_i - 3D_iY_i^2
	\le 4 \sum_{i = 1}^n Y_i D_i^2,
\]
and hence,
\begin{align*}
	I_n^{(3)} \le \frac{4 \sum_{i = 1}^n Y_i D_i^2}{\sum_{i = 1} D_i^3} \left(1 - \frac{\left(\sum_{i = 1}^n D_i^2\right)^2}{L_n \sum_{i = 1}^n {D}_i^3}\right)^{-1}. 
\end{align*}
By \eqref{eq:convergence_pearson_negaitve_fraction} the last term converges in probability to one. Finally, by \eqref{eq:pearson_min_ecm_vs_cm_4delta},
\[
	\frac{3\delta}{2} \le 2\delta = 4\delta - 2\delta = \left(\gamma - 2 - \frac{2}{\gamma} - \delta\right) 
	+ \left(\frac{3}{\gamma} - \delta\right)
\]
and hence, by Corollary \ref{cor:scaling_degrees_erased_edges} and Proposition \ref{prop:scaling_sums_powers_regularly_varying},
\[
	n^{\frac{3\delta}{2}}I_n^{(3)} \le \left(4n^{\gamma - 2 - \frac{2}{\gamma} - \delta} \sum_{i = 1}^n Y_i D_i^2\right)
	\left(\frac{n^{\frac{3}{\gamma} - \delta}}{\sum_{i = 1} D_i^3}\right) \plim 0,
\]
which finishes the proof.
\end{proof}

\subsection{Convergence of $\widehat{r}^{\,+}_n$}

The next step towards the proof of Theorem \ref{thm:clt_pearson_ecm} is to show that, for some $\delta > 0$, 
\[
	n^{1 - \frac{1}{\gamma} + \delta} \widehat{r}^{\, +}_n \plim 0,
\]
where
\[
	\widehat{r}^{\, +}_n = \frac{\sum_{i,j = 1}^n \widehat{X}_{ij} \widehat{D}_i \widehat{D}_j}{\sum_{i = 1}^n \widehat{D}_i^3 - 
			\frac{1}{\widehat{L}_n} \left(\sum_{i = 1}^n \widehat{D}_i^2\right)^2},
\]
denotes the positive part of Pearson's correlation coefficient in the erased configuration model. Here $\widehat{X}_{ij} = \ind{X_{ij} > 0}$ denotes the event that $i$ and $j$ are connected by at least one edge in the configuration model graph $G_n$.
The main ingredient of this result is the following lemma, which gives an approximation for 
$\sum_{1 \le i < j \le n} \Probn{X_{ij} = 0}D_i D_j$: 

\begin{lemma}\label{prop:ecm_joint_moments}
Let ${\bf D}_n$ be sampled from $\mathscr{D}$ with $1< \gamma < 2$ and $\Exp{\mathscr{D}} = \mu$. Consider graphs 
$G_n = \emph{\texttt{CM}}({\bf D}_n)$ and define
\[
	M_n = \sum_{1 \le i < j \le n}\left|\Probn{X_{ij} = 0} 
	- \exp\left\{-\frac{D_i D_j}{L_n}\right\}\right|D_i D_j
\]
Then, for any $K > 0$ and $0 < \delta < \left(\frac{2 - \gamma}{\gamma} \wedge \frac{\gamma - 1}{\gamma}\right)$,
\[
	n^{1 - \frac{4}{\gamma} + \delta} Z_n \plim 0.
\]
\end{lemma}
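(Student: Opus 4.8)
The lemma estimates the quantity $M_n$ just introduced (the displayed conclusion should read $n^{1-\frac{4}{\gamma}+\delta}M_n\plim 0$), and I will prove this for every $0<\delta<\frac{\gamma-1}{\gamma}$. The plan is to reduce everything to pointwise control of $\Probn{X_{ij}=0}$, split $M_n$ according to whether $D_iD_j\le L_n$ or $D_iD_j>L_n$, and bound the two pieces using the moment estimates of Proposition~\ref{prop:scaling_sums_powers_regularly_varying} together with $L_n\sim\mu n$ from Lemma~\ref{lem:Kolmogorov_SLLN}. Throughout I will use that $\max_{k}D_k=o_\pr(L_n)$, which follows from Proposition~\ref{prop:scaling_sums_powers_regularly_varying}(iii) (with $p=1$) and $L_n\sim\mu n$.

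The two pointwise ingredients both come from revealing the partners of the $D_i$ half-edges at $i$ one at a time. Conditionally on the past, and on not having connected to $j$ so far, each pairing that touches $i$ avoids $j$ with probability at most $1-D_j/L_n$; since at least $D_i/2$ pairings touch $i$ (a self-loop consumes two of $i$'s half-edges), this gives the clean deterministic bound
\[
  \Probn{X_{ij}=0}\le\Big(1-\frac{D_j}{L_n}\Big)^{D_i/2}\le\exp\Big\{-\frac{D_iD_j}{2L_n}\Big\}.
\]
For the refined estimate I would use the same exposure to write $\log\Probn{X_{ij}=0}=\sum_m\log\!\big(1-\tfrac{D_j}{L_n-2m-1}\big)$ (up to bookkeeping for self-loops at $i$) and expand via $-p-p^2\le\log(1-p)\le -p$. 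Because $\max_kD_k=o_\pr(L_n)$ forces every $p=D_j/(L_n-2m-1)$ to be uniformly small, the first-order term reproduces $-D_iD_j/L_n$ while the collected error is $O\!\big(D_iD_j(D_i+D_j)/L_n^2\big)$; exponentiating and using $|\me^{\theta}-1|\le 3|\theta|$ for $|\theta|\le 1$ yields, uniformly over all pairs with $D_iD_j\le L_n$,
\[
  \Big|\Probn{X_{ij}=0}-\exp\Big\{-\frac{D_iD_j}{L_n}\Big\}\Big|\le C\,\frac{D_iD_j(D_i+D_j)}{L_n^2}.
\]
The careful treatment of self-loops and, above all, the \emph{uniformity} of this second bound are the main obstacle; an equivalent and perhaps cleaner route is to compute the exact factorial moments $\Expn{(X_{ij})_\ell}=(D_i)_\ell(D_j)_\ell\prod_{m=0}^{\ell-1}(L_n-2m-1)^{-1}$, where $(x)_\ell=x(x-1)\cdots(x-\ell+1)$, and compare the inclusion–exclusion series $\Probn{X_{ij}=0}=\sum_{\ell\ge0}\frac{(-1)^\ell}{\ell!}\Expn{(X_{ij})_\ell}$ termwise with that of $\me^{-D_iD_j/L_n}$.

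With these bounds I split $M_n=M_n^{A}+M_n^{B}$ at $D_iD_j=L_n$. On $M_n^A$ the refined estimate gives
\[
  M_n^A\le\frac{C}{L_n^2}\sum_{1\le i<j\le n}D_i^2D_j^2(D_i+D_j)\le\frac{2C}{L_n^2}\Big(\sum_{i=1}^nD_i^3\Big)\Big(\sum_{i=1}^nD_i^2\Big)=o_\pr\!\big(n^{5/\gamma-2+\varepsilon}\big),
\]
for every $\varepsilon>0$, by Proposition~\ref{prop:scaling_sums_powers_regularly_varying}(i) (note $2/\gamma,3/\gamma>1$) and $L_n\sim\mu n$. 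On $M_n^B$ the first bound together with $|\Probn{X_{ij}=0}-\me^{-D_iD_j/L_n}|\le 2\me^{-D_iD_j/(2L_n)}$ and $t\me^{-t/2}\le 2/\me$ bounds each summand by $2\me^{-D_iD_j/(2L_n)}D_iD_j\le 4L_n/\me$, so that, on the likely event $L_n\ge\mu n/2$,
\[
  M_n^B\le\frac{4L_n}{\me}\,\#\big\{\,i<j:\ D_iD_j>\tfrac{\mu n}{2}\,\big\}=o_\pr\!\big(n^{3-\gamma+\varepsilon}\big),
\]
where the count is $o_\pr(n^{2-\gamma+\varepsilon})$ by a first-moment bound, since $D_iD_j$ has a regularly varying tail of index $\gamma$ (up to a logarithmic factor) and hence $\Exp{\#\{i<j:D_iD_j>\mu n/2\}}\le\binom{n}{2}\Prob{D_1D_2>\mu n/2}=n^{2-\gamma+o(1)}$. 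Finally, $n^{1-\frac4\gamma+\delta}M_n^A=o_\pr(n^{\frac1\gamma-1+\delta+\varepsilon})$ and $n^{1-\frac4\gamma+\delta}M_n^B=o_\pr(n^{4-\gamma-\frac4\gamma+\delta+\varepsilon})$, and for $\delta<\frac{\gamma-1}{\gamma}$ and $\varepsilon$ small both exponents are negative: $\frac5\gamma-2<\frac4\gamma-1$ is $\frac1\gamma<1$, and $3-\gamma<\frac4\gamma-1$ is $(\gamma-2)^2>0$. Hence $n^{1-\frac4\gamma+\delta}M_n\plim 0$.
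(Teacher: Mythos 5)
You correctly identified the typo in the statement (the conclusion should read $n^{1-\frac{4}{\gamma}+\delta}M_n \plim 0$; $Z_n$ is the number of erased edges from Theorem~\ref{thm:scaling_erased_edges}), and your regime-A analysis is sound — indeed pointwise sharper than the paper's. But there is a genuine gap in regime B, and it invalidates both your claimed range $\delta<\frac{\gamma-1}{\gamma}$ and, for $\gamma$ close to $2$, even the lemma's stated range. Your bound $M_n^B=o_{\pr}\left(n^{3-\gamma+\varepsilon}\right)$ gives $n^{1-\frac{4}{\gamma}+\delta}M_n^B=o_{\pr}\left(n^{4-\gamma-\frac{4}{\gamma}+\delta+\varepsilon}\right)$, and your final check only verifies $3-\gamma<\frac{4}{\gamma}-1$, i.e., that this exponent is negative at $\delta=\varepsilon=0$; it is \emph{not} negative for all $\delta<\frac{\gamma-1}{\gamma}$. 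You actually need $\delta<\gamma+\frac{4}{\gamma}-4=\frac{(2-\gamma)^2}{\gamma}$, and $(2-\gamma)^2<\gamma-1$ exactly when $\gamma^2-5\gamma+5<0$, i.e., for $\gamma\in\bigl((5-\sqrt{5})/2,2\bigr)\approx(1.38,2)$. Concretely, at $\gamma=1.9$ the lemma asserts the result for all $\delta<\frac{2-\gamma}{\gamma}\approx 0.053$, while your regime-B exponent $4-\gamma-\frac{4}{\gamma}+\delta$ is already positive at $\delta=0.03$. The loss is intrinsic to your B-estimate, not to the bookkeeping: pairs with $D_iD_j\asymp n$ number $n^{2-\gamma+o(1)}$ and your per-pair bound $4L_n/\me$ is of order $n$, so $n^{3-\gamma}$ is genuinely what that estimate yields — and since $3-\gamma>\frac{2}{\gamma}$ on $(1,2)$, it is worse than every term in the paper's proof, where the generic comparison (Lemma~\ref{lem:taylor_connection_prob} applied with $m=1$) yields errors summing to $I_n^{(1)},I_n^{(2)}=O_{\pr}\bigl(n^{2/\gamma+\varepsilon}\bigr)$ and $I_n^{(3)}=O_{\pr}\bigl(n^{5/\gamma-2+\varepsilon}\bigr)$, uniformly over all pairs and with no regime split.

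The gap is reparable within your own framework, and the repair in fact beats the paper. Your restriction $D_iD_j\le L_n$ was imposed only so that $|\me^{\theta}-1|\le 3|\theta|$ applies; replace that multiplicative step by the Lipschitz bound $|\me^{-a}-\me^{-b}|\le|a-b|$ for $a,b\ge 0$, with $a=-\log\prod_{t=0}^{D_i-1}\bigl(1-D_j/(L_n-2t-1)\bigr)$ and $b=D_iD_j/L_n$, so that $|a-b|=O\bigl(D_iD_j(D_i+D_j)/L_n^2\bigr)$ uniformly on the high-probability event $\max_k D_k=o(n)$, with no condition on the size of $D_iD_j/L_n$. The self-loop "bookkeeping" you deferred is exactly the inequality $0\le\Probn{X_{ij}=0}-\prod_{t=0}^{D_i-1}\bigl(1-D_j/(L_n-2t-1)\bigr)\le D_i^2D_j/(L_n-2D_i)^2$ from \cite{hofstad2005}, which the paper's proof also invokes and which is of the same order as your error term. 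With this, your $M_n^A$ computation applies to all of $M_n$, giving $M_n=o_{\pr}\bigl(n^{5/\gamma-2+\varepsilon}\bigr)$ and hence the lemma for every $\delta<\frac{\gamma-1}{\gamma}$ — a range containing the stated one, and strictly larger than the paper's for $\gamma>3/2$, because the paper's route through the black-box Lemma~\ref{lem:taylor_connection_prob} incurs additive errors of size $O(D_i/L_n+D_j/L_n)$ (not damped by the factor $D_iD_j/L_n$), which is precisely what creates the terms of order $n^{2/\gamma}$ and the extra constraint $\delta<\frac{2-\gamma}{\gamma}$.
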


In our proofs $M_n$ will be divided by
\[
	\sum_{i = 1}^n D_i^3 - \frac{1}{L_n} \left(\sum_{i = 1}^n D_i^2\right)^2,
\]
which is of the order $n^{3/\gamma}$. Hence the final expression will be of the order $n^{\frac{1}{\gamma} - 1 - \delta} =
o(n^{\frac{1}{\gamma} - 1})$, which is enough to prove the final result.

To prove Lemma \ref{prop:ecm_joint_moments}, we will use the following technical result:

\begin{lemma}[{\cite[Lemma 6.7]{Hoorn2015b}}] \label{lem:taylor_connection_prob}
For any non-negative $x, x_0 > 0$, $y_i, z_i \geq 0$ with $z_i < x$ for all $i$, and any 
$m \geq 1$, we have 
\[
	- \frac{x_0}{x^2} (x_0-x)^+ - \frac{x_0}{2} \max_{1 \leq i \leq m} \frac{z_i}{(x-z_i)^2} \leq 
	\prod_{i=1}^{m} \left( 1- \frac{z_i}{x} \right)^{y_i} - \exp\left\{ - \frac{1}{x_0} 
	\sum_{i=1}^{m} y_i z_i \right\} \leq \frac{|x-x_0|}{(x \wedge x_0)}.
\]
\end{lemma}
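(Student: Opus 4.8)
The plan is to prove this as a purely analytic two-sided estimate comparing the weighted product $P:=\prod_{i=1}^m(1-z_i/x)^{y_i}$ to the exponential $E:=\exp\{-S/x_0\}$, where I write $S:=\sum_{i=1}^m y_iz_i\ge 0$. The condition $z_i<x$ guarantees $z_i/x\in[0,1)$, so all factors are positive and the logarithmic inequalities below apply. The central device is to insert the intermediate quantity $\me^{-S/x}$, which uses the normaliser $x$ of the product rather than $x_0$, and to split $P-E=\big(P-\me^{-S/x}\big)+\big(\me^{-S/x}-E\big)$. The first difference measures the error in replacing the product by an exponential with matching normaliser and is controlled through Taylor bounds for $u\mapsto\log(1-u)$; the second compares two exponentials $\me^{-S/x}$ and $\me^{-S/x_0}$ with a common factor $S$ and is handled by the mean value theorem applied to $t\mapsto\me^{-St}$.

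For the upper bound I would use $\log(1-u)\le -u$ to get $\log P=\sum_i y_i\log(1-z_i/x)\le -S/x$, hence $P\le\me^{-S/x}$ and $P-E\le\me^{-S/x}-\me^{-S/x_0}$. The mean value theorem gives $\me^{-S/x}-\me^{-S/x_0}=-S\me^{-S\eta}(1/x-1/x_0)$ for some $\eta$ between $1/x$ and $1/x_0$. If $x<x_0$ this is $\le 0$ and there is nothing to prove; if $x\ge x_0$ then $1/x_0-1/x=(x-x_0)/(xx_0)\ge 0$, and bounding $S\me^{-S\eta}\le\sup_{s\ge 0}s\me^{-s\eta}=1/(\me\,\eta)\le 1/\eta\le x$ yields $P-E\le x\cdot(x-x_0)/(xx_0)=(x-x_0)/x_0=|x-x_0|/(x\wedge x_0)$, as claimed.

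For the lower bound I would treat the two pieces separately. The same mean value computation shows that $\me^{-S/x}-E$ is negative only when $x_0>x$; there $\eta\ge 1/x_0$ gives $S\me^{-S\eta}\le x_0$, so $\me^{-S/x}-E\ge -x_0(1/x-1/x_0)=-(x_0-x)/x\ge -\tfrac{x_0}{x^2}(x_0-x)^+$, which is the first term of the stated bound. For $P-\me^{-S/x}$ I would use the second-order estimate $\log(1-u)\ge -u-\tfrac12 u^2/(1-u)$, which follows from $\sum_{k\ge 2}u^k/k\le\tfrac12\sum_{k\ge 2}u^k=\tfrac12 u^2/(1-u)$. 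This gives $\log P\ge -S/x-R$ with nonnegative remainder $R=\sum_i y_i z_i^2/\big(2x(x-z_i)\big)$, and then $\me^{-R}\ge 1-R$ yields $P-\me^{-S/x}\ge -\me^{-S/x}R\ge -R$. The second term of the stated bound, $-\tfrac{x_0}{2}\max_{1\le i\le m}z_i/(x-z_i)^2$, is obtained by dominating $R$, using $x-z_i\le x$ to replace the denominator $x(x-z_i)$ by $(x-z_i)^2$; this is where the normaliser $x_0$ and the weights enter, and is the delicate point discussed next.

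The main obstacle is precisely this domination of the remainder $R$: passing from the weighted sum $\sum_i y_iz_i^2/(x(x-z_i))$ to the single quantity $x_0\max_i z_i/(x-z_i)^2$ requires controlling the weights $y_iz_i$ against $x_0$, using $z_i<x$ together with the crude bounds $z_i\le x$ and $x-z_i\le x$, and is the only step where the interplay of $x$, $x_0$ and the $(y_i)$ matters. By contrast the two exponential comparisons are routine once the mean value theorem and the bound $s\me^{-s\eta}\le 1/\eta$ are in place, and the logarithmic inequalities used are standard. Thus the result is entirely elementary, the care being concentrated in organising the second-order remainder into the claimed uniform form.
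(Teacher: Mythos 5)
Your decomposition through the intermediate quantity $\me^{-S/x}$, the mean-value comparison of the two exponentials, and the second-order bound $\log(1-u)\ge -u-\tfrac12 u^2/(1-u)$ are all sound, and both exponential-comparison pieces (the upper bound and the term $-\tfrac{x_0}{x^2}(x_0-x)^+$) are handled correctly. The genuine gap sits exactly where you flag ``the delicate point'': after writing $P-\me^{-S/x}\ge -\me^{-S/x}R\ge -R$ you discard the factor $\me^{-S/x}$, and you then assert that $R=\sum_i y_iz_i^2/\big(2x(x-z_i)\big)$ can be dominated by $\tfrac{x_0}{2}\max_{1\le i\le m} z_i/(x-z_i)^2$ using only the crude bounds $z_i\le x$ and $x-z_i\le x$. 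That domination is false: $R$ grows linearly in the weights $y_i$, while the target bound is independent of them. Take $m=1$, $x=x_0=1$, $z_1=\tfrac12$ and let $y_1\to\infty$: then $R=y_1/4\to\infty$, whereas $\tfrac{x_0}{2}\,z_1/(x-z_1)^2=1$. No manipulation of the $z_i$ alone can control the weights against $x_0$, so the step as described cannot be completed.

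The repair is to keep the exponential prefactor and reuse the very device you deploy in the other two steps, $\sup_{s\ge 0}s\,\me^{-s/c}=c/\me\le c$: since $R\le S\max_i z_i/\big(2x(x-z_i)\big)$ with $S=\sum_i y_iz_i$, the prefactor absorbs the unbounded weight sum. Concretely, split on the sign of $x-x_0$. If $x\ge x_0$, use monotonicity first, $P\ge \me^{-S/x-R}\ge \me^{-S/x_0-R}$, so $P-E\ge -\me^{-S/x_0}R$; then $S\me^{-S/x_0}\le x_0$ and $x-z_i\le x$ give $\me^{-S/x_0}R\le \tfrac{x_0}{2}\max_i z_i/(x-z_i)^2$, while the piece $\me^{-S/x}-E$ is nonnegative in this case. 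If $x<x_0$, keep $-\me^{-S/x}R$ and use $S\me^{-S/x}\le x$ to get $\me^{-S/x}R\le \tfrac12\max_i z_i/(x-z_i)$, after which $x-z_i\le x<x_0$ termwise yields the bound $\tfrac{x_0}{2}\max_i z_i/(x-z_i)^2$; your treatment of $\me^{-S/x}-E$ in that regime is already correct. (For reference, the paper does not prove this lemma itself but imports it from \cite{Hoorn2015b}, so this review addresses correctness of your argument rather than a comparison with an in-paper proof.)
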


\begin{proof}[Proof of Lemma \ref{prop:ecm_joint_moments}]
We will first consider the term $\left|\Probn{X_{ij} = 0} - \exp\left\{-\frac{D_i D_j}{L_n}
\right\}\right|$. It follows from computations done in \cite{hofstad2005} that
\begin{equation}\label{eq:ecm_pearson_diff_Xij_product}
	0 \le \Probn{X_{ij} = 0} - \prod_{t = 0}^{D_i - 1} \left(1 - \frac{D_j}{L_n -2t -1}\right) 
	\le \frac{D_i^2 D_j}{(L_n - 2D_i)^2}.
\end{equation}
For the product term in \eqref{eq:ecm_pearson_diff_Xij_product} we have the following bounds
\[
	\left(1 - \frac{D_j}{L_n -2D_i + 1}\right)^{D_i} 
	\le \prod_{t = 0}^{D_i - 1} \left(1 - \frac{D_j}{L_n -2t -1}\right) \le
	\left(1 - \frac{D_j}{L_n}\right)^{D_i} 
\]
and therefore, using Lemma \ref{lem:taylor_connection_prob} with $m = 1$, we can bound the 
difference between $\Probn{X_{ij} = 0}$ and $\exp\left\{-D_i D_j/L_n\right\}$. For the lower 
bound we take $x = L_n$, $x_0 = L_n + 1 - 2D_i$, $y = D_i$ and $z = D_j$ to get 
\begin{equation}\label{eq:ecm_pearson_diff_X_ij_exp_lower}
	-\frac{L_n(2D_i - 1)}{(L_n - 2D_i + 1)^2} - \frac{D_j}{L_n + 1 - 2D_1 - D_j}
	\le \Probn{X_{ij} = 0} - \exp\left\{-\frac{D_i D_j}{L_n}\right\}
\end{equation}
while changing $x_0$ to $L_n$ yields
\begin{equation}\label{eq:ecm_pearson_diff_X_ij_exp_upper}
	\Probn{X_{ij} = 0} - \exp\left\{-\frac{D_i D_j}{L_n}\right\} 
	\le \frac{D_i^2 D_j}{(L_n - 2D_i)^2}.
\end{equation}
Combining \eqref{eq:ecm_pearson_diff_X_ij_exp_lower} and 
\ref{eq:ecm_pearson_diff_X_ij_exp_upper} gives
\begin{align*}
	&\sum_{1 \le i < j \le n}\left|\Probn{X_{ij} = 0} 
	- \exp\left\{-\frac{D_i D_j}{L_n}\right\}\right|D_i D_j \\
	&\le \sum_{1 \le i < j \le n} \frac{2L_nD_i^2 D_j}{(L_n - 2D_i + 1)^2} 
		+ \sum_{1 \le i < j \le n} \frac{D_i D_j^2}{L_n + 1 - 2D_1 - D_j}
		+ \sum_{1 \le i < j \le n} \frac{D_i^3 D_j^2}{(L_n - 2D_i)^2}\\
	&:= I_n^{(1)} + I_n^{(2)} + I_n^{(3)}.
\end{align*}
We will now show that 
\begin{equation}\label{eq:ecm_joint_moments_main}
	n^{1 - \frac{4}{\gamma} + \delta} I_n^{(t)} \plim 0, \quad \text{for } t = 1, 2, 3,
\end{equation}
which proves the result.

For the remainder of the proof we denote
\[
	D_n^{\mathrm{max}} := \max_{1 \le i \le n} D_i.
\]
and observe that by our choice of $\delta$,
\[
	\varepsilon_1 := \frac{2}{\gamma} - 1 - \delta = \frac{2 - \gamma}{\gamma} - \delta > 0 \quad \text{and} \quad
	\varepsilon_2 := 1 - \frac{1}{\gamma} - \delta = \frac{\gamma - 1}{\gamma} - \delta > 0.
\]

For $t = 1$, we have
\begin{align*}
	I_n^{(1)} = \hspace{-3pt} \sum_{1 \le i < j \le n} \frac{2L_nD_i^2 D_j}{(L_n - 2D_i + 1)^2} 
	\le \frac{2L_n^2 \sum_{i = 1}^n D_i^2}{\left(L_n^2 - 4 L_n D_n^{\mathrm{max}}\right)}
	= \left(2\sum_{i = 1}^n D_i^2 \right)\left(1 - \frac{4D_n^{\mathrm{max}}}{L_n}\right)^{-1}.
\end{align*}
By the strong law of large numbers and Proposition \ref{prop:scaling_sums_powers_regularly_varying}, it follows that
\[
	\frac{M_n}{L_n} = \frac{\mu n}{L_n} \frac{D_n^{\mathrm{max}}}{\mu n} \plim 0,
\]
and hence
\begin{equation}\label{eq:ecm_joint_moments_convergence_Mn_Ln}
	\left(1 - \frac{4D_n^{\mathrm{max}}}{L_n}\right)^{-1} \plim 1.
\end{equation}
Proposition \ref{prop:scaling_sums_powers_regularly_varying} then implies
\[
	n^{1 + \delta - \frac{4}{\gamma}}I_n^{(1)} \le \left(2n^{-\frac{2}{\gamma} - \varepsilon_1}\sum_{i = 1}^n D_i^2 \right)\left(1 - \frac{4D_n^{\mathrm{max}}}{L_n}\right)^{-1} \plim 0.
\]

The analysis for $I_n^{(2)}$ is similar so that we are left with $I_n^{(3)}$. Here, we have
\begin{align*}
	I_n^{(3)} &= \sum_{1 \le i < j \le n} \frac{D_i^3 D_j^2}{(L_n - 2D_i)^2}
		\le \frac{\sum_{i = 1}^n D_i^2 \sum_{j = 1}^n D_j^3}{\left(L_n^2 - 4 L_n D_n^{\mathrm{max}}\right)}\\
	&=\frac{1}{L_n^2}\left(\sum_{i = 1}^n D_i^2\right)\left(\sum_{j = 1}^n D_j^3\right)\left(1 - \frac{4D_n^{\mathrm{max}}}{L_n}\right)^{-1}.
\end{align*}
The last term again converges in probability to one, by \eqref{eq:ecm_joint_moments_convergence_Mn_Ln}. For the remaining terms we use the definition of $\varepsilon_2$ and Proposition \ref{prop:scaling_sums_powers_regularly_varying} to obtain 
\begin{align*}
	n^{1 + \delta - \frac{4}{\gamma}}I_n^{(3)} &\le n^{2 - \frac{5}{\gamma} + \varepsilon_2}\frac{1}{L_n^2}
		\left(\sum_{i = 1}^n D_i^2\right)\left(\sum_{j = 1}^n D_j^3\right)\left(1 - \frac{4D_n^{\mathrm{max}}}{L_n}\right)^{-1} \\
	&= \frac{n^2}{L_n^2}\left(n^{-\frac{2}{\gamma} - \frac{\varepsilon_2}{2}}\sum_{i = 1}^n D_i^2\right)
		\left(n^{-\frac{3}{\gamma} - \frac{\varepsilon_2}{2}}\sum_{j = 1}^n D_j^3\right)
		\left(1 - \frac{4D_n^{\mathrm{max}}}{L_n}\right)^{-1} \plim 0,
\end{align*}
which finishes the proof of \eqref{eq:ecm_joint_moments_main}.
\end{proof}

We proceed to prove the convergence of $\widehat{r}_n^+$:

\begin{proposition}\label{prop:pearson_ecm_positive_part}
Let ${\bf D}_n$ be sampled from $\mathscr{D}$ with $1< \gamma < 2$, $\Exp{\mathscr{D}} = \mu$ and let
$\widehat{G}_n = \emph{\texttt{ECM}}({\bf D}_n)$. Then, for any slowly-varying function $\mathcal{L}$,
\begin{equation}\label{eq:pearson_clt_ecm_r+}
	\mathcal{L}(n) n^{1 - \frac{1}{\gamma}}\widehat{r}^{\, +}_n \plim 0.
\end{equation}
\end{proposition}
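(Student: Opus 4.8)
The plan is to bound the numerator of $\widehat{r}^{\,+}_n$ while controlling the denominator from below, and to show that the numerator is of strictly smaller polynomial order than $n^{4/\gamma-1}$. For the denominator, note that by Cauchy--Schwarz it is non-negative, and by Lemma~\ref{lem:scaling_pearson_ecm_main_fraction} the subtracted term $\widehat{L}_n^{-1}\big(\sum_i \widehat{D}_i^2\big)^2$ is negligible compared to $\sum_i \widehat{D}_i^3$; moreover $\sum_i\widehat{D}_i^3$ agrees with $\sum_i D_i^3$ up to an error of order $n^{2/\gamma+2-\gamma+\delta}=\op(n^{3/\gamma})$ by Corollary~\ref{cor:scaling_degrees_erased_edges} (since $(\gamma-1)^2>0$), so by Proposition~\ref{prop:scaling_sums_powers_regularly_varying}(ii) the denominator is at least $n^{3/\gamma-\delta}$ with high probability. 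Consequently, to prove~\eqref{eq:pearson_clt_ecm_r+} it suffices to find $\delta>0$ with
\[
	n^{1-\frac{4}{\gamma}+\delta}\sum_{1\le i<j\le n}\widehat{X}_{ij}D_iD_j\plim 0,
\]
because $\widehat{D}_i\le D_i$ and the diagonal terms vanish ($\widehat{X}_{ii}=0$); the factor $n^{1-1/\gamma}$ together with any slowly-varying $\mathcal{L}(n)=\op(n^\delta)$ is then absorbed by the $n^{3/\gamma}$ lower bound on the denominator.

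Next I would pass to the conditional expectation. Writing $\widetilde N_n=\sum_{1\le i<j\le n}\ind{X_{ij}>0}D_iD_j$ and using $\widehat{X}_{ij}=\ind{X_{ij}>0}$ for $i\neq j$, the conditional Markov inequality gives, for any $\varepsilon>0$,
\[
	\Prob{n^{1-\frac{4}{\gamma}+\delta}\widetilde N_n>\varepsilon}
	\le \Exp{1\wedge \frac{n^{1-\frac{4}{\gamma}+\delta}\Expn{\widetilde N_n}}{\varepsilon}},
\]
so it is enough to show $n^{1-4/\gamma+\delta}\Expn{\widetilde N_n}\plim 0$ and to conclude by bounded convergence. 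Now $\Expn{\widetilde N_n}=\sum_{i<j}(1-\Probn{X_{ij}=0})D_iD_j$, and Lemma~\ref{prop:ecm_joint_moments} allows me to replace $\Probn{X_{ij}=0}$ by $\me^{-D_iD_j/L_n}$ at the cost of the error term $M_n$, which already satisfies $n^{1-4/\gamma+\delta}M_n\plim 0$. Thus the problem reduces to the sum
\[
	S_n=\sum_{1\le i<j\le n}\bigl(1-\me^{-D_iD_j/L_n}\bigr)D_iD_j.
\]

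The main obstacle, and the only genuinely delicate point, is that the crude bound $1-\me^{-x}\le x$ yields only $S_n\le \tfrac{1}{2L_n}\big(\sum_i D_i^2\big)^2$, which is of order exactly $n^{4/\gamma-1}$ and leaves \emph{no} polynomial gap. The gap must be recovered by exploiting the saturation of $1-\me^{-x}$ for large $x$: using $1-\me^{-x}\le 1\wedge x\le x^{a}$, valid for every $a\in(0,1]$, I obtain
\[
	S_n\le \frac{1}{L_n^{a}}\sum_{1\le i<j\le n}(D_iD_j)^{1+a}
	\le \frac{1}{2L_n^{a}}\Bigl(\sum_{i=1}^n D_i^{1+a}\Bigr)^2.
\]
Choosing $a=\gamma/2\in(\gamma-1,1)$, the exponent $1+a$ exceeds $\gamma$, so the regularly-varying scaling (the non-integer analogue of Proposition~\ref{prop:scaling_sums_powers_regularly_varying}(i), which holds by the same stable-law argument) gives $\sum_i D_i^{1+a}=\op(n^{(1+a)/\gamma+\delta})$, and $L_n\sim\mu n$. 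Hence $S_n=\op(n^{1+2/\gamma-\gamma/2+\delta'})$, and since $1+2/\gamma-\gamma/2<4/\gamma-1$ with gap $(2-\gamma)^2/(2\gamma)>0$, I may fix $\delta>0$ small enough that $n^{1-4/\gamma+\delta}S_n\plim 0$. Combining this with the bound on $M_n$ yields $n^{1-4/\gamma+\delta}\Expn{\widetilde N_n}\plim 0$, which through the dominated-convergence step and the lower bound on the denominator established first completes the proof of~\eqref{eq:pearson_clt_ecm_r+}.
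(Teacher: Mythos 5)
Your proof is correct, and it departs from the paper's argument at exactly the step you flag as delicate. The reduction is the same as the paper's: it too reduces \eqref{eq:pearson_clt_ecm_r+} to showing $n^{1-4/\gamma+\delta}\sum_{i<j}\widehat{X}_{ij}D_iD_j\plim 0$, controls the denominator via Lemma~\ref{lem:scaling_pearson_ecm_main_fraction} and Proposition~\ref{prop:scaling_sums_powers_regularly_varying}(ii) (your intermediate step relating $\sum_i\widehat{D}_i^3$ to $\sum_i D_i^3$ through Corollary~\ref{cor:scaling_degrees_erased_edges} is in fact slightly more careful than the paper, which applies the proposition to the erased degrees directly), and uses Lemma~\ref{prop:ecm_joint_moments} to swap $\Probn{X_{ij}=0}$ for $\me^{-D_iD_j/L_n}$; the paper organizes this through high-probability events $\Lambda_n=\mathcal{A}_n\cap\mathcal{B}_n$ and an unconditional Markov bound, whereas you use conditional Markov plus dominated convergence --- equivalent devices. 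The genuine difference is the core estimate on $S_n$: the paper bounds $1-\me^{-x}\le(1\wedge x)$ and then evaluates the \emph{unconditional} truncated moment $\Exp{\mathscr{D}_1\mathscr{D}_2\left(1\wedge\frac{\mathscr{D}_1\mathscr{D}_2}{t}\right)}\sim c\,\mathcal{L}(t)t^{1-\gamma}$ via the Karamata-type Lemma~\ref{lem:karamata}, using that the product $\mathscr{D}_1\mathscr{D}_2$ of independent regularly-varying variables is again regularly varying with index $\gamma$; this yields the exponent $4-4/\gamma-\gamma=-(2-\gamma)^2/\gamma$. You instead use the pathwise interpolation $1-\me^{-x}\le 1\wedge x\le x^{a}$ with $a=\gamma/2$ and the scaling $\sum_i D_i^{1+a}=\op(n^{(1+a)/\gamma+\delta})$, producing the smaller but still positive gap $(2-\gamma)^2/(2\gamma)$. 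Your route avoids both Lemma~\ref{lem:karamata} and the product-regular-variation input, at the cost of invoking Proposition~\ref{prop:scaling_sums_powers_regularly_varying}(i) for the non-integer power $p=1+\gamma/2$; as you say, this extension is routine ($D^{p}$ is regularly varying with index $\gamma/p<1$, or one can argue by truncation at $n^{1/\gamma+\delta}$), though it is a fact not literally stated in the paper. One further simplification available to you: choosing instead $a\in\left(\left(3-\tfrac{4}{\gamma}\right)\vee 0,\ \gamma-1\right)$, a nonempty interval precisely because $(2-\gamma)^2>0$, gives $1+a<\gamma$, so $\Exp{\mathscr{D}^{1+a}}<\infty$ and the ordinary law of large numbers yields $\sum_i D_i^{1+a}=\bigOp{n}$ and $S_n=\bigOp{n^{2-a}}=\op\left(n^{\frac{4}{\gamma}-1-\delta}\right)$, dispensing with stable-law input altogether.
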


\begin{proof}
Let
\[
	\delta = \left(\frac{2 - \gamma}{4\gamma} \wedge \frac{\gamma - 1}{4\gamma}\right).
\]
We will show that
\begin{equation}\label{eq:pearson_ecm_r+_main}
	n^{1 - \frac{1}{\gamma} + \delta} \, \widehat{r}^{\, +}_n \plim 0,
\end{equation}
which then implies \eqref{eq:pearson_clt_ecm_r+}, since by Potter's theorem $\mathcal{L}(n) n^{-\delta} \to 0$,
for any $\delta > 0$.

The main part of the proof of \eqref{eq:pearson_ecm_r+_main} will be to show that
\begin{equation}\label{eq:pearson_ecm_r+_joint_degrees}
	n^{1 - \frac{4}{\gamma} + 2\delta} \sum_{1 \le i < j \le n} \widehat{X}_{ij} D_i D_j \plim 0,
\end{equation}

To see that \eqref{eq:pearson_ecm_r+_joint_degrees} implies \eqref{eq:pearson_ecm_r+_main}, we write
\begin{align*}
	\widehat{r}^{\,+}_n &\le \frac{\sum_{1 \le i < j \le n} \widehat{X}_{ij} D_i
		D_j}{\sum_{i = 1}^n \widehat{D}_i^3 - \frac{1}{\widehat{L}_n} 
		\left(\sum_{i = 1}^n \widehat{D}_i^2\right)^2}\\
	&= \sum_{1 \le i < j \le n} \widehat{X}_{ij} D_i D_j \left(\frac{1}{\sum_{i = 1}^n \widehat{D}_i^3}\right)
		\left(1 - \frac{\left(\sum_{i = 1}^n \widehat{D}_i^2\right)^2}{\widehat{L}_n
		\sum_{i = 1}^n \widehat{D}_i^3}\right)^{-1}.
\end{align*}
By Lemma \ref{lem:scaling_pearson_ecm_main_fraction},
\[
	\left(1 - \frac{\left(\sum_{i = 1}^n \widehat{D}_i^2\right)^2}
	{\widehat{L}_n\sum_{i = 1}^n \widehat{D}_i^3}\right)^{-1} \plim 1,
\]
and hence, using \eqref{eq:pearson_ecm_r+_joint_degrees} and Proposition \ref{prop:scaling_sums_powers_regularly_varying},
\[
	n^{1 - \frac{1}{\gamma} + \delta} \, \widehat{r}^{\,+}_n \le \left(n^{1 - \frac{4}{\gamma} + 2\delta} 
	\hspace{-6pt}\sum_{1 \le i < j \le n} \widehat{X}_{ij} D_i D_j\right)
	\hspace{-4pt}\left(\frac{n^{-\frac{3}{\gamma} - \delta}}{\sum_{i = 1}^n \widehat{D}_i^3}\right)\hspace{-4pt}
	\left(1 - \frac{\left(\sum_{i = 1}^n \widehat{D}_i^2\right)^2}{\widehat{L}_n
			\sum_{i = 1}^n \widehat{D}_i^3}\right)^{-1} \hspace{-7pt} \plim 0.
\]

To prove \eqref{eq:pearson_ecm_r+_joint_degrees} let $\kappa = (\gamma - 1)/(\gamma + 1)$ and define the events
\begin{align}
	\mathcal{A}_n &= \left\{\left|L_n - \mu n\right| \le n^{1 - \kappa}\right\}, \label{eq:event_A_n} \\
	\mathcal{B}_n &= \left\{\sum_{1 \le i < j \le n}\left|\Probn{X_{ij = 0}} 
	- \exp\left\{-\frac{D_i D_j}{L_n}\right\}\right|D_i D_j \le n^{\frac{4}{\gamma} - 1- 3\delta} \notag
	\right\}. 
\end{align}
Then, if we set $\Lambda_n = \mathcal{A}_n \cap \mathcal{B}_n$, it follows from Lemma \ref{lem:Kolmogorov_SLLN} and Lemma \ref{prop:ecm_joint_moments} that $\Prob{\Lambda_n} \to 1$ and hence, it is enough to prove that for any $K > 0$
\begin{equation}\label{eq:pearson_ecm_r+_joint_degrees_cond}
	\lim_{n \to \infty} \Prob{n^{1 - \frac{4}{\gamma} + 2\delta}\sum_{1 \le i < j \le n} \widehat{X}_{ij} D_i
			D_j > K, \Lambda_n} = 0.
\end{equation}

First, since $\Expn{\widehat{X}_{ij}} = \Probn{X_{ij} > 0}$ and $\Lambda_n$ is completely determined by the degree sequence,
\begin{align*}
	\Expn{\sum_{1 \le i < j \le n} \widehat{X}_{ij} D_i	D_j\indE{\Lambda_n}} 
	&= \sum_{1 \le i < j \le n} \Probn{X_{ij} > 0} D_i D_j \indE{\Lambda_n}\\
	&= \sum_{1 \le i < j \le n} \left(1 - \Probn{X_{ij} = 0}\right)D_i D_j \indE{\Lambda_n}\\
	&\le \sum_{1 \le i < j \le n} \left(1 - \exp\left\{-\frac{D_iD_j}{L_n}\right\}\right)D_i D_j\indE{\Lambda_n}
		+ n^{\frac{4}{\gamma} - 1 - 3\delta}\\
	&\le \sum_{1 \le i < j \le n} \left(1 - \exp\left\{-\frac{D_iD_j}{\mu n - n^{1 - \kappa}}\right\}\right)
		D_i D_j + n^{\frac{4}{\gamma} - 1 - 3\delta}.
\end{align*}
From this we obtain, using Markov's inequality, that
\begin{align}
	&\hspace{-30pt}\Prob{n^{1 - \frac{4}{\gamma} + 2\delta}\sum_{1 \le i < j \le n} \widehat{X}_{ij} D_i
				D_j > K, \Lambda_n} \notag \\
	&\le \frac{n^{3 - \frac{4}{\gamma} + 2\delta}}
		{K}
		\Exp{\left(1 - \exp\left\{-\frac{\mathscr{D}_1 \mathscr{D}_2}{\mu n - n^{1 - \kappa}}\right\}\right)\mathscr{D}_1 
		\mathscr{D}_2} + \bigO{n^{-\delta}}, \label{eq:pearson_clt_ecm_r+_main_term}
\end{align}
where $\mathscr{D}_1$ and $\mathscr{D}_2$ are two independent copies of $\mathscr{D}$. It follows that
$\mathscr{D}_1\mathscr{D}_2$ is again regularly varying with exponent $1 < \gamma < 2$. Therefore, since
$1 - e^{-x} \le (1 \wedge x)$ and using Lemma \ref{lem:karamata},
\begin{align*}
	&\hspace{-30pt}\frac{n^{3 - \frac{4}{\gamma} + 2\delta}}
		{K}
		\Exp{\left(1 - \exp\left\{-\frac{\mathscr{D}_1 \mathscr{D}_2}{\mu n - n^{1 - \kappa}}\right\}
		\right)\mathscr{D}_1 \mathscr{D}_2} \\
	&\le \frac{n^{3 - \frac{4}{\gamma} + 2\delta}}
	{K}
		\Exp{\left(1 \wedge \frac{\mathscr{D}_1 \mathscr{D}_2}{\mu n - n^{1 - \kappa}}\right)
		\mathscr{D}_1 \mathscr{D}_2} \\
	&= \bigO{n^{4 - \frac{4}{\gamma} - \gamma + 2\delta}}.\mumberthis 			
		\label{eq:pearson_clt_ecm_r+_term_1}
\end{align*}
Now observe that by our choice of $\delta > 0$ and since $2 - \gamma < 1$, we get
\[
	4 - \frac{4}{\gamma} - \gamma + 2\delta = \frac{-(\gamma - 2)^2}{\gamma} + 2\delta
	\le \frac{-(\gamma - 2)^2}{\gamma} + \frac{2 - \gamma}{2\gamma}
	\le \frac{-(\gamma - 2)^2}{2\gamma} < 0,
\]
Plugging this into \eqref{eq:pearson_clt_ecm_r+_term_1}, it follows from \eqref{eq:pearson_clt_ecm_r+_main_term} that
\[
	\Prob{n^{1 - \frac{4}{\gamma} + 2\delta}\sum_{1 \le i < j \le n} \widehat{X}_{ij} D_i D_j 
		> K , \Lambda_n} = \bigO{n^{-\frac{(\gamma - 2)^2}{2 \gamma}} + n^{-\delta}}.
\]
and hence \eqref{eq:pearson_ecm_r+_joint_degrees_cond} follows.
\end{proof}

\subsection{Proving Theorem \ref{thm:clt_pearson_ecm}}

We are now ready to prove the central limit theorem for the ECM. 

\begin{proof}[Proof of Theorem \ref{thm:clt_pearson_ecm}]
Let $\widehat{G}_n = \texttt{ECM}({\bf D}_n)$ and $\mathcal{S}_{\gamma/2}$ and $\mathcal{S}_{\gamma/3}$ be defined as in \eqref{eq:def_S_gamma} and let $\mathcal{L}_0$ be
given by Proposition \ref{prop:clt_pearson_negative_part}. Now we write
\begin{align*}
	\mu\mathcal{L}_0(n)n^{1 - \frac{1}{\gamma}} r(\widehat{G}_n)
	&= \mu\mathcal{L}_0(n)n^{1 - \frac{1}{\gamma}} \widehat{r}^{\,+}_n
		- \mu\mathcal{L}_0(n)n^{1 - \frac{1}{\gamma}} \widehat{r}^{\,-}_n.
\end{align*}
By Proposition \ref{prop:pearson_ecm_positive_part} it follows that the first part converges to zero in probability, as $n \to \infty$. For the second part, let $\delta = (\gamma - 1)^2/(4 \gamma)$ and note that by Potter's theorem \cite[Theorem 1.5.6]{Bingham1989} we have that
$\mathcal{L}_0(n) \le n^{\delta}$ for all large enough $n$. Then, if we denote by $G_n$ the graph before the removal of edges, it follows by Proposition \ref{prop:pearson_min_cm_vs_ecm} that
\[
	\mu\mathcal{L}_0(n)n^{1 - \frac{1}{\gamma}} \left|r^{-}_n - \widehat{r}^{\,-}_n\right| \plim 0.
\]
Finally, we remark that the graph $G_n$ is generated by the CM so that the above and Proposition \ref{prop:clt_pearson_negative_part} now imply
\[
	\mu\mathcal{L}_0(n)n^{1 - \frac{1}{\gamma}} \widehat{r}^{\,-}_n \dlim 
	\frac{\mathcal{S}_{\gamma/2}^2}{\mathcal{S}_{\gamma/3}},
\]
as $n \to \infty$ from which the result follows.
\end{proof}

\subsection{Pearson in the CM: Proof of Lemma~\ref{lem-Pearon-CM-Var}}\label{sec:lempearsoncm}

In this section, we prove Lemma~\ref{lem-Pearon-CM-Var} on the tightness of the conditional variance of Pearson in the CM.
\begin{proof} Since, conditionally on the degrees, the only randomness in $r_n$ is in $(X_{ij})_{1\leq i<j\leq n}$, we use the covariance formula for sums of random variables to compute that
	\eqan{
		{\rm Var}_n(r_n)&=\frac{\sum_{i,j,k,l\in[n]} D_iD_jD_kD_l{\rm Cov}_n(X_{ij}, X_{kl})}{\Big[\sum_{i\in[n]} D_i^3-\Big(\sum_{i\in[n]} D_i^2\Big)^2/L_n\Big]^2}\\
		&=\frac{\sum_{i,j,k,l\in[n]} D_iD_jD_kD_l{\rm Cov}_n(X_{ij}, X_{kl})}{\Big(\sum_{i\in[n]} D_i^3\Big)^2}(1+\op(1)),\nonumber
	}
	since $\sum_{i\in[n]} D_i^3\gg \Big(\sum_{i\in[n]} D_i^2\Big)^2/L_n$ and where we write ${\rm Cov}_n$ for the conditional variance in the CM given the i.i.d.\ degrees.
	We next compute ${\rm Cov}_n(X_{ij}, X_{kl})$, depending on the precise form of $\{i,j,k,l\}$. For this, we note that
	\eqn{
		X_{ij}=\sum_{s=1}^{D_i} \sum_{t=1}^{D_j} I_{st},
	}
	with $I_{st}$ the indicator that the $s$th half-edge incident to $i$ pairs to the $j$th half-edge incident to $j$.
	
	\paragraph*{Case $(i,j)=(k,l)$ with $i<j$.} We compute that
	\eqan{
		{\rm Var}_n(X_{ij})&=\sum_{(s,t)}\sum_{(s',t')} {\rm Cov}_n(I_{st}, I_{s't'})\\
		&=\frac{D_i(D_i-1)D_j(D_j-1)}{(L_n-1)(L_n-3)}+\frac{D_iD_j}{(L_n-1)}-\frac{D_i^2D_j^2}{(L_n-1)^2}\nonumber\\
		&=\frac{D_iD_j}{(L_n-1)}+2\frac{D_i^2D_j^2}{(L_n-1)^2(L_n-3)}-\frac{D_iD_j(D_j-1)}{(L_n-1)(L_n-3)}
		-\frac{D_i^2D_j}{(L_n-1)(L_n-3)}\nonumber\\
		&=\frac{D_iD_j}{L_n}(1+\op(1)).\nonumber
	}
	Thus,
	\eqan{
		n \frac{\sum_{i<j\in[n]} D_i^2D_j^2{\rm Var}_n(X_{ij})}{\Big(\sum_{i\in[n]} D_i^3\Big)^2}
		&=\frac{n}{L_n}(1+\op(1)) \frac{\sum_{i<j\in[n]} D_i^3D_j^3}{\Big(\sum_{i\in[n]} D_i^3\Big)^2}\\
		&=\frac{1}{\mu}(1+\op(1)) \frac{\sum_{i<j\in[n]} D_i^3D_j^3}{\Big(\sum_{i\in[n]} D_i^3\Big)^2}.\nonumber
	}
	Since we sum over all $i,j\in[n]$ and not just $i<j$, this term appears 4 times.

	\paragraph*{Case $|\{i,j,k,l\}|=1$.} In this case, we obtain
	\eqan{
		{\rm Var}_n(X_{ii})&=\frac{D_i(D_i-1)}{(L_n-1)}+\frac{D_i(D_i-1)(D_i-2)(D_i-3)}{(L_n-1)(L_n-3)}-\frac{D_i^4}{(L_n-1)^2}\\
		&=\frac{D_i^2}{L_n}(1+\op(1))+\frac{D_i^4}{L_n^3}(1+\op(1))-6\frac{D_i^3}{L_n^2}(1+\op(1))\nonumber\\
		&=\frac{D_i^2}{L_n}(1+\op(1)),\nonumber
	}
	since $D_i=\bigOp{n^{\gamma}}$ with $\gamma\in(\tfrac{1}{2},1)$. Therefore,
	\eqan{
		n \frac{\sum_{i\in[n]} D_i^4{\rm Var}_n(X_{ii})}{\Big(\sum_{i\in[n]} D_i^3\Big)^2}
		&=\frac{n(1+\op(1))}{L_n}\frac{\sum_{i\in[n]} D_i^6}{\Big(\sum_{i\in[n]} D_i^3\Big)^2}.\nonumber
	}
	\medskip
	
	The above two computations show that these contributions sum up to
	\eqn{
		\frac{1}{\mu}(1+\op(1)) \frac{4\sum_{i<j\in[n]} D_i^3D_j^3}{\Big(\sum_{i\in[n]} D_i^3\Big)^2}
		+\frac{1}{\mu}(1+\op(1)) \frac{\sum_{i\in[n]} D_i^6}{\Big(\sum_{i\in[n]} D_i^3\Big)^2}
		\dlim \frac{2-\mathcal{S}_{\gamma/6}/\mathcal{S}_{\gamma/3}^2}{\mu}.
	}
	We are left to show that all other terms constitute error terms.

	\paragraph*{Case $|\{i,j,k,l\}|=4$.} When $|\{i,j,k,l\}|=4$, we compute that
	\eqan{
		{\rm Cov}_n(X_{ij}, X_{kl})&=D_iD_jD_lD_k \Big(\frac{1}{(L_n-1)(L_n-3)}-\frac{1}{(L_n-1)^2}\Big)\\
		&=\frac{2D_iD_jD_lD_k}{(L_n-1)^2(L_n-3)}.\nonumber
	}
	Thus, the contribution to the variance of $r_n$ of this case equals
	\eqn{
		\frac{\Big(\sum_{i\in[n]} D_i^2\Big)^4}{L_n^3\Big(\sum_{i\in[n]} D_i^3\Big)^2}(1+\op(1))
		=\bigOp{n^{2\gamma-3}}=\op(n^{-1}),
	}
	since $\gamma\in(\tfrac{1}{2},1)$.
	
	\paragraph*{Case $|\{i,j,k,l\}|=3$.} When $|\{i,j,k,l\}|=3$, we compute that
	\eqan{
		{\rm Cov}_n(X_{ij}, X_{il})&=
		\frac{D_i(D_i-1)D_jD_l}{(L_n-1)(L_n-3)}-\frac{D_i^2D_jD_l}{(L_n-1)^2}\nonumber\\
		&=2\frac{D_i^2D_jD_l}{(L_n-1)^2(L_n-3)}-\frac{D_iD_jD_l}{(L_n-1)(L_n-3)}
	}
	Thus, the contribution to the variance of $r_n$ of this case equals
	\eqan{
		&2\frac{\Big(\sum_{i\in[n]} D_i^2\Big)^2}{L_n^3\sum_{i\in[n]} D_i^3}(1+\op(1))-
		\frac{\Big(\sum_{i\in[n]} D_i^2\Big)^3}{L_n^2\Big(\sum_{i\in[n]} D_i^3\Big)^2}(1+\op(1))\\
		&\qquad=\bigOp{n^{\gamma-3}}+\bigOp{n^{-2}}=\op(n^{-1}),\nonumber
	}
	since $\gamma\in(\tfrac{1}{2},1)$.
	
	This completes the proof.
\end{proof}

\section{Clustering coefficient}\label{sec:clustering}
In this section, we prove Theorems~\ref{thm:CCM} and~\ref{thm:CECM} on the clustering coefficient in the configuration model as well as the erased configuration model. In both models, we first study the clustering coefficient when the degree sequence is fixed. We show that the clustering coefficient concentrates around its expected value when the degrees are given. Then, we analyze how the random degrees influence the clustering coefficient.

\subsection{Clustering in the configuration model}\label{ssec:clustering_cm}
In this section, we compute the clustering coefficient for a configuration model with a power-law degree distribution with $\gamma\in(1,2)$. To prove Theorem~\ref{thm:CCM}, we first use a second moment method to show that the number of triangles $\triangle_n$ concentrates on its expected value conditioned on the degrees.
Then we take the random degrees into account and show that the rescaled clustering coefficient converges to the stable distributions from Theorem~\ref{thm:CCM}.

\subsubsection{Concentration for the number of triangles}

The concentration result is formally stated and proved in the next lemma. 
\begin{lemma}\label{lem:exvar}
Let ${\bf D}_n$ be sampled from $\mathscr{D}$ with $1< \gamma < 2$, and
${G}_n = \emph{\texttt{CM}}({\bf D}_n)$. Let $\triangle_n$ denote the number of triangles in in $G_n$. Then, for any $\varepsilon>0$,
\[
	\lim_{n\to\infty}\Probn{\abs{\triangle_n-\Expn{\triangle_n}}>\varepsilon\Expn{\triangle_n}}=0.
\]
\end{lemma}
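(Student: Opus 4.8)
The plan is to prove the stronger quantitative statement that, conditionally on the degrees, the variance of the triangle count is negligible compared to the square of its mean, namely $\Varn{\triangle_n} = \op{\big((\Expn{\triangle_n})^2\big)}$ in the sense that $\Varn{\triangle_n}/(\Expn{\triangle_n})^2 \plim 0$ as $n\to\infty$. Granting this, the lemma follows from Chebyshev's inequality applied to the conditional law given ${\bf D}_n$: pathwise in the degree sequence we have $\Probn{\abs{\triangle_n-\Expn{\triangle_n}}>\varepsilon\Expn{\triangle_n}}\le \Varn{\triangle_n}/(\varepsilon^2(\Expn{\triangle_n})^2)$, and since the right-hand side is a function of the degrees that tends to $0$ in probability, so does the left-hand side.

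First I would write $\triangle_n$ as a sum over \emph{potential triangles}. A potential triangle $T$ consists of three distinct vertices together with an (ordered) choice of two half-edges at each of them, prescribed to be paired into a $3$-cycle; letting $\indE{T}$ be the indicator that all three prescribed pairings are realised, one has $\triangle_n = c\sum_T \indE{T}$ for a fixed combinatorial constant $c$. Because three distinct vertices contribute six distinct half-edges and hence three disjoint prescribed pairings, $\Expn{\indE{T}} = 1/[(L_n-1)(L_n-3)(L_n-5)]$, and summing over $T$ gives $\Expn{\triangle_n}$ with leading term of order $(\sum_i D_i^2)^3/L_n^3$. Using the joint stable-law convergence \eqref{eq:joint_convegrence_degree_sums}, the replacement $L_n\approx\mu n$ from Lemma~\ref{lem:Kolmogorov_SLLN}, and Proposition~\ref{prop:scaling_sums_powers_regularly_varying} to see that the subtracted terms $\sum_i D_i^4$ and $\sum_i D_i^6/\sum_i D_i^2$ are of strictly smaller order, I obtain that $\Expn{\triangle_n}$ is, up to a slowly-varying factor, of exact order $n^{6/\gamma-3}$ with a positive (random) stable limit; in particular $\Expn{\triangle_n}\to\infty$ and $(\Expn{\triangle_n})^2$ is bounded below by a constant times $n^{12/\gamma-6-\delta'}$ in probability for arbitrarily small $\delta'>0$.

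The core step is the variance expansion $\Varn{\triangle_n}=c^2\sum_{T,T'}{\rm Cov}_n(\indE{T},\indE{T'})$, which I organise according to how many half-edges $T$ and $T'$ share; since a half-edge determines its partner, sharing a half-edge is equivalent to sharing a full paired edge. For pairs sharing no half-edge, the twelve half-edges give six disjoint pairings, so $\Expn{\indE{T}\indE{T'}}/(\Expn{\indE{T}}\Expn{\indE{T'}}) = (L_n-1)(L_n-3)(L_n-5)/[(L_n-7)(L_n-9)(L_n-11)]=1+O(1/L_n)$; hence these contribute at most $O(1/L_n)\,(\Expn{\triangle_n})^2$, which is negligible. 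The diagonal $T=T'$ contributes at most $c^2\sum_T\Expn{\indE{T}}$, of order $\Expn{\triangle_n}=\op{\big((\Expn{\triangle_n})^2\big)}$ since $\Expn{\triangle_n}\to\infty$. For the remaining pairs I bound ${\rm Cov}_n\le\Expn{\indE{T}\indE{T'}}$ and enumerate: two triangles sharing exactly one edge $\{i,j\}$ with distinct apexes $k,k'$ use ten half-edges and five pairings, contributing on the order of $L_n^{-5}(\sum_i D_i^3)^2(\sum_i D_i^2)^2=\bigOp{n^{10/\gamma-5+\delta}}$, while two triangles sharing two edges (forcing a multi-edge between a common vertex pair) use four pairings and contribute on the order of $L_n^{-4}(\sum_i D_i^3)^2\sum_i D_i^2=\bigOp{n^{8/\gamma-4+\delta}}$, both estimated via Proposition~\ref{prop:scaling_sums_powers_regularly_varying}(i). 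Dividing by $(\Expn{\triangle_n})^2$ yields factors $\bigOp{n^{1-2/\gamma+\delta''}}$ and $\bigOp{n^{2-4/\gamma+\delta''}}$, which tend to $0$ for small enough $\delta''$ because $\gamma<2$.

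The main obstacle is precisely this last case analysis: one must enumerate \emph{all} overlap types exhaustively and correctly—paying attention to multi-edges, since two distinct potential triangles can live on the same vertex set—and then verify that the single-shared-edge class is the genuine bottleneck, so that the largest exponent $1-2/\gamma$ is strictly negative throughout $\gamma\in(1,2)$. The delicate points are the bookkeeping of shared half-edges and the resulting degree-power sums, and the fact that every bound must be established \emph{in probability} over the random degrees rather than for a deterministic sequence; this is where the uniform upper bounds of Proposition~\ref{prop:scaling_sums_powers_regularly_varying} and the strict positivity of the stable limit $\mathcal{S}_{\gamma/2}$ (guaranteeing the lower bound on $\Expn{\triangle_n}$) are essential, and where a small exponent margin $\delta$ absorbs all slowly-varying corrections.
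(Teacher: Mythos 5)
Your proposal is correct and follows essentially the same route as the paper: a conditional second-moment/Chebyshev argument, with the variance organised by how many prescribed half-edge pairings the two potential triangles share, the disjoint case giving a $1+O(1/L_n)$ ratio, the one-shared-edge and two-shared-edge classes bounded by $L_n^{-5}\big(\sum_i D_i^3\big)^2\big(\sum_i D_i^2\big)^2$ and $L_n^{-4}\big(\sum_i D_i^3\big)^2\sum_i D_i^2$ respectively, and the same decisive exponents $1-2/\gamma$ and $2-4/\gamma$, both negative on $\gamma\in(1,2)$. The paper's proof differs only cosmetically (it works with $\Expn{\triangle_n(\triangle_n-1)}$ on explicit high-probability degree events rather than with covariances and stable-law lower bounds on the mean), so no gap to report.
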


\begin{proof}  Fix $0<\delta<(\tfrac{2}{\gamma}-1)/6$. Define the event
\begin{equation}\label{eq:event_sumDsq}
	\mathcal{B}_n=\Big\{\sum_{i=1}^{n}D_i^2\geq Kn^{ 2/\gamma-\delta}, \sum_{i=1}^{n}D_i^3\leq Kn^{ 3/\gamma+\delta}\Big \} .
\end{equation} 
Let $\mathcal{A}_n$ be the event defined in~\eqref{eq:event_A_n}, and let $\Lambda_n=\mathcal{B}_n\cap \mathcal{A}_n$.
We have $\lim_{n\to\infty}\Probn{\Lambda_n}=1$, thus, we only need to prove the result on the event $\Lambda_n$.
The proof is similar to the proof of~\cite[Proposition 7.13]{VanDerHofstad2016a}.

Define
\begin{align*}
\mathcal{I}&=\{(s_1t_1,s_2u_1,u_2t_2,i,j,k) :1\leq i<j<k\leq n, 1\leq s_1<s_2\leq { D}_i,1\leq t_1\neq t_2\leq { D}_j,\\
& \quad 1\leq u_1\neq u_2\leq { D}_k\}.
\end{align*}
This is the set of combinations of six distinct half-edges that could possibly form a triangle on three distinct vertices. Thus,  
\begin{align}\label{eq:_absI}
|\mathcal{I}|=\sum_{1\leq i<j<k\leq n}D_i(D_i-1)D_j(D_j-1)D_k(D_k-1)
\end{align}
denotes the number of ways six half-edges could form a triangle.
For $m\in\mathcal{I}$, let $\mathbbm{1}_{m}$ denote the indicator variable that the six half-edges of $m$ form a triangle in the way specified by $m$. 
Then,
\[
\triangle_n = \sum_{m\in\mathcal{I}}\mathbbm{1}_{m}.
\]
The probability that the half-edges in $m$ form a triangle can be written as
\[
	\Probn{\mathbbm{1}_m=1}=\prod_{j=1}^{3}(L_n+1-2j)^{-1}.
\]
This results in
\begin{equation}\label{eq:exptriangcm}
	\Expn{\triangle_n}=\sum_{m\in\mathcal{I}}\Probn{\mathbbm{1}_m=1} = \frac{|\mathcal{I}|}{\prod_{j=1}^{3}(L_n+1-2j)}.
\end{equation}
Furthermore, by~\cite[Theorem 2.5]{VanDerHofstad2016a},
\[
\Expn{\triangle_n(\triangle_n-1)}=\sum_{m_1\neq m_2\in\mathcal{I}}\Probn{\mathbbm{1}_{m_1}=\mathbbm{1}_{m_2}=1}.
\]
When all six pairs of half-edges involved in $m_1$ and $m_2$ are distinct, the probability that these pairs of half-edges that form $m_1$ and $m_2$  are paired in the correct way is
\[
	\Probn{\mathbbm{1}_{m_1}=\mathbbm{1}_{m_2}=1}=\prod_{j=1}^{6}(L_n+1-2j)^{-1},
\]
If $m_1$ and $m_2$ contain one pair of half-edges that is the same (so that $m_1$ and $m_2$ form two triangles merged on one edge), 
\[
\Probn{\mathbbm{1}_{m_1}=\mathbbm{1}_{m_2}=1}=\prod_{j=1}^{5}(L_n+1-2j)^{-1}.
\]
Let $\mathcal{I}_2$ denote the set of combinations of 10 half-edges that could possibly form two triangles merged by one edge. Then, similarly to~\eqref{eq:_absI},
\begin{align}
|\mathcal{I}_2| &= \sum_{1\leq i < j\leq n} D_i(D_i-1)(D_i-2)D_j(D_j-1)(D_j-2) \sum_{1\leq k < l\leq n}D_k(D_k-1)D_l(D_l-1)\nonumber\\
& \leq \Big(\sum_{i \in[n]}D_i^3\Big)^2\Big(\sum_{i \in[n]}D_i^2\Big)^2.
\end{align}
Similarly, when $m_1$ and $m_2$ overlap at two pairs of half-edges (so that $m_1$ and $m_2$ form two triangles merged by two edges)
	\[
\Probn{\mathbbm{1}_{m_1}=\mathbbm{1}_{m_2}=1}=\prod_{j=1}^{4}(L_n+1-2j)^{-1}.
\]
	Let $\mathcal{I}_3$ denote the set of combinations of 8 half-edges that could possibly form two triangles merged by two edges. Then, similarly to~\eqref{eq:_absI},
\begin{align}
|\mathcal{I}_3| &= \sum_{1\leq i < j\leq n} D_i(D_i-1)(D_i-2)D_j(D_j-1)(D_j-2) \sum_{1\leq k \leq n}D_k(D_k-1)\nonumber\\
& \leq \Big(\sum_{i \in[n]}D_i^3\Big)^2\sum_{i \in[n]}D_i^2.
\end{align}
In all other cases the probability of the event $\mathbbm{1}_{m_1}=\mathbbm{1}_{m_2}=1$ then equals zero. These are cases where $m_1$ prescribes some half-edge to be merged to half-edge $j_1$, whereas $m_2$ prescribes it to be merged to some other half-edge $j_2$. 
Therefore,
\begin{align}\label{eq:var3terms}
\Expn{\triangle_n(\triangle_n-1)}\leq\frac{|\mathcal{I}|^2}{\prod_{j=1}^{6}(L_n+1-2j)}+\frac{|\mathcal{I}_2|}{\prod_{j=1}^{5}(L_n+1-2j)}+\frac{|\mathcal{I}_3|}{\prod_{j=1}^{4 }(L_n+1-2j)}.
\end{align}

On the event $\mathcal{B}_n$ defined in~\eqref{eq:event_sumDsq}, 
\begin{align*}
\frac{|\mathcal{I}_2|}{\prod_{j=1}^{5}(L_n+1-2j)}\Big/\frac{|\mathcal{I}|^2}{\prod_{j=1}^{6}(L_n+1-2j)}& =O\Bigg(  \frac{\Big(\sum_{i\in[n]}D_i^3\Big)^2/\Big(\sum_{i\in[n]}D_i^2\Big)^4}{(L_n+1-12)^{-1}}\Bigg)\\
& = \bigO{n \cdot n^{6/\gamma+2\delta-8/\gamma+4\delta}} = \bigO{n^{1-2/\gamma+6\delta }}\\
& = o(1),
\end{align*}
by the choice of $\delta$. In a similar way, we can show that the third term is small compared to the first term of~\eqref{eq:var3terms}.
Therefore,
\[
\Expn{\triangle_n(\triangle_n-1)}\leq\frac{|\mathcal{I}|^2}{\prod_{j=1}^{6}(L_n+1-2j)}(1+o(1)).
\]	
Finally, on the event $\mathcal{B}_n$ we have, 
\[
|\mathcal{I}|= \Theta\Big(\Big(\sum_i D_i^2\Big)^3\Big)= \Omega(n^{6/\gamma-3\delta}).
\]  
Using that $L_n=\mu n(1+o(1))$ on the event $\mathcal{A}_n$ results in
	\begin{align*}
\frac{	\Varn{\triangle_n}}{(\Expn{\triangle_n})^2}&=\frac{ \Expn{\triangle_n(\triangle_n-1)}}{(\Expn{\triangle_n})^2}-1+\frac{\Expn{\triangle_n}}{(\Expn{\triangle_n})^2}\\
& \leq \frac{(L_n-1)(L_n-3)(L_n-5)}{(L_n-7)(L_n-9)(L_n-11)}(1+o(1))-1+ \frac{\prod_{j=1}^{3}(L_n+1-2j)}{|\mathcal{I}|}\\
&=1+o(1)-1+O(n^{3 + 3\delta - \frac{6}{\gamma}})=o(1),
\end{align*}
	for $\gamma\in(1,2)$.  Then by Chebyshev's inequality, on the event $\Lambda_n$
\[
\Probn{\abs{\triangle_n-\Expn{\triangle_n}}>\varepsilon\Expn{\triangle_n}}\leq \frac{\Varn{\triangle_n}}{(\Expn{\triangle_n})^2\varepsilon^2}=o(1),
\]
which gives the result.
\end{proof}

\subsubsection{Proof of Theorem~\ref{thm:CCM}}

\begin{proof}[Proof of Theorem~\ref{thm:CCM}] We again prove the result under the event $\Lambda_n=\mathcal{B}_n\cap \mathcal{A}_n$, where $\mathcal{B}_n$ and $\mathcal{A}_n$ are given respectively in \eqref{eq:event_sumDsq} and \eqref{eq:event_A_n}.

By~\eqref{eq:_absI} and~\eqref{eq:exptriangcm}
\begin{align*}
\Expn{\triangle_n}& = \frac{1}{\mu^3 n^3}\sum_{1\leq i<j<k\leq n}D_i(D_i-1)D_j(D_j-1)D_k(D_k-1)(1+\op(1))\\
&=\frac{1}{\mu^3 n^3}\Big(\frac 16\Big(\sum_{i=1}^nD_i(D_i-1)\Big)^3-\frac{1}{2}\sum_{i=1}^{n}D_i(D_i-1)\sum_{j=1}^{n}D_j^2(D_j-1)^2\\
&\quad +\frac 13 \sum_{i=1}^{n}D_i^3(D_i-1)^3\Big)(1+\op(1))
\end{align*}
Then, the definition of the clustering coefficient in~\eqref{eq:clust} yields
\begin{equation}\label{eq:ccmappr}
\begin{aligned}
	\Expn{C_n}& = \frac{1}{\mu^3 n^3}\Big(\Big(\sum_{i=1}^nD_i(D_i-1)\Big)^2-3 \sum_{i=1}^{n}D_i^2(D_i-1)^2\\
	&\quad +2\frac{\sum_{i=1}^{n}D_i^3(D_i-1)^3}{\sum_{i=1}^{n}D_i(D_i-1)}\Big)(1+\op(1))\\
	& =   \frac{1}{\mu^3 n^3}\Big(\Big(\sum_{i=1}^nD_i^2\Big)^2-3\sum_{i=1}^{n}D_i^4+2\sum_{i=1}^{n}D_i^6/\sum_{i=1}^{n}D_i^2\Big)(1+\op(1)),
\end{aligned}
\end{equation}
Lemma~\ref{lem:exvar} then gives that conditioned on the degree sequence
\[
	C_n=   \frac{1}{\mu^3 n^3}\Big(\Big(\sum_{i=1}^nD_i^2\Big)^2-3\sum_{i=1}^{n}D_i^4+2\sum_{i=1}^{n}D_i^6/\sum_{i=1}^{n}D_i^2\Big)(1+\op(1)).
\]

By~\eqref{eq:joint_convegrence_degree_sums}, there exist a slowly-varying functions $\mathcal{L}_1(n),\mathcal{L}_2(n)$ and $\mathcal{L}_3(n)$ such that 
\begin{align}\label{eq:jointconv2}
	\Big({n^{-2/\gamma}}\frac{\sum_{i=1}^{n}{D}_i^2}{\mathcal{L}_1(n)},
	{n^{-4/\gamma}}\frac{\sum_{i=1}^{n}{D}_i^4}{\mathcal{L}_2(n)},{n^{-6/\gamma}}\frac{\sum_{i=1}^{n}D_i^6}{\mathcal{L}_3(n)}\Big)\dlim 
	\left(\mathcal{S}_{\gamma/2},\mathcal{S}_{\gamma/4},{\mathcal{S}_{\gamma/6}}\right),
\end{align}
where
\[
	\mathcal{S}_{\gamma/2} = \sum_{i=1}^{\infty}\Gamma_i^{-2/\gamma},\quad 
	\mathcal{S}_{\gamma/4} = \sum_{i=1}^{\infty}\Gamma_i^{-4/\gamma},\quad 
	\mathcal{S}_{\gamma/6} = \sum_{i=1}^{\infty}\Gamma_i^{-6/\gamma},\quad 
\]
for the same random variables $\Gamma_i$. 
Furthermore, by~\cite[Eq (5.23)]{whitt2006}, the slowly-varying functions in~\eqref{eq:jointconv2} satisfy for some slowly-varying function $\mathcal{L}_0(n)$,
\[
\mathcal{L}_1(n)=\sqrt{\mathcal{L}_0(n)}(\hat{C}_{\gamma/2})^{2/\gamma},\quad \mathcal{L}_2(n)=\mathcal{L}_0(n)(\hat{C}_{\gamma/4})^{4/\gamma},\quad \mathcal{L}_3(n)=\mathcal{L}_0(n)^{3/2}(\hat{C}_{\gamma/6})^{6/\gamma},
\]
where 
\[
\hat{C}_\alpha = \frac{1-\alpha}{\Gamma(2-\alpha)\cos(\pi\alpha/2)},
\]
with $\Gamma$ the Gamma-function. Therefore,
\begin{align*}
	&\hspace{-30pt}\frac{n^{-4/\gamma}}{\mathcal{L}_0(n)}\Big((\sum_{i=1}^{n}{D}_i^2)^2,
\sum_{i=1}^{n}{D}_i^4,\frac{\sum_{i=1}^{n}D_i^6}{\sum_{i=1}^{n}{D}_i^2}\Big)\\
&\dlim 
\left((\hat{C}_{\gamma/2})^{\gamma/4}\mathcal{S}_{\gamma/2}^2,(\hat{C}_{\gamma/4})^{\gamma/4}\mathcal{S}_{\gamma/4},\frac{(\hat{C}_{\gamma/6})^{\gamma/6}\mathcal{S}_{\gamma/6}}{(\hat{C}_{\gamma/2})^{\gamma/2}\mathcal{S}_{\gamma/2}}\right),
\end{align*}
Combining this with~\eqref{eq:ccmappr} results in~\eqref{eq:ccm}.
\end{proof}

\subsection{ Clustering coefficient in the erased configuration model }\label{ssec:clustering_ecm}
In this section, we study the clustering coefficient in the \texttt{ECM}. Again, we start with the expectation and the variance of the clustering coefficient conditioned on the sampled degree sequence, i.e the sequence ${\bf D}_n = \{D_1, D_2, \dots, D_n\}$ sampled from the distribution~\eqref{eq:distribution_degrees}. Note that this is not the eventual degree sequence of the graph constructed by the erased configuration model.

\paragraph*{Structure of the proof of Theorem~\ref{thm:CECM}.}

We prove Theorem~\ref{thm:CECM} in four steps:

\emph{Step 1.} We show in Lemma~\ref{lem:sqrt} that the expected contribution to the number of triangles from vertices with sampled degrees larger than $\sqrt{n}/\varepsilon$ and smaller than $\varepsilon\sqrt{n}$ is small for fixed $0<\varepsilon<1$. Therefore, in the rest of the proof we focus on only counting triangles between vertices of degrees $[\varepsilon\sqrt{n},\sqrt{n}/\varepsilon]$. 

\emph{Step 2.} We calculate the expected number of triangles between vertices of sampled degrees proportional to $\sqrt{n}$, conditioned on the degree sequence. In Lemma~\ref{lem:extriang}, we show that this expectation can be written as the sum of a function of the degrees.

\emph{Step 3.} We show that the variance of the number of triangles between vertices of sampled degree proportional to $\sqrt{n}$ is small in Lemma~\ref{lem:exvarECM}. Thus, we can replace the number of triangles conditioned on the degrees by its expected value, which we computed in {\it Step 2}.

\emph{Step 4.} We show that the expected number of triangles conditioned on the sampled degrees converges to the value given in Theorem~\ref{thm:CECM}, when taking the random degrees into account.

We will start by proving the three lemma's described above.
Let $B_n(\varepsilon)$ denote the interval $[\varepsilon\sqrt{\mu n}, \sqrt{\mu n}/\varepsilon]$ for some $\varepsilon>0$. Furthermore, let $\widehat{X}_{ij}$ denote the number of edges between vertex $i$ and $j$ in the erased configuration model. Then, we can write the number of triangles as
\begin{align*}
\triangle_n &= \hspace{-6pt}\sum_{1\leq i< j< k\leq n} 
	\hspace{-4pt} \widehat{X}_{ij}\widehat{X}_{jk}\widehat{X}_{ik}\mathbbm{1}_{\{D_i,D_j, D_k\in B_n(\varepsilon)\}}
	+ \hspace{-6pt} \sum_{1\leq i< j< k\leq n} \hspace{-4pt} \widehat{X}_{ij}\widehat{X}_{jk}\widehat{X}_{ik}\mathbbm{1}_{\{D_i, D_j \text{ or }D_k \notin B_n(\varepsilon)\}}\\
&=:\triangle_n(B_n(\varepsilon))+\triangle_n(\bar{B}_n(\varepsilon)) \mumberthis \label{eq:splitX}
\end{align*}
We want to show that the major contribution to $\triangle_n$ comes from $\triangle_n(B_n(\varepsilon))$. 
The following lemma shows that the expected contribution of $\triangle_n(\bar{B}_n(\varepsilon))$ to the number of triangles is small.

\begin{lemma}\label{lem:sqrt}
Let ${\bf D}_n$ be sampled from $\mathscr{D}$ with $1< \gamma < 2$ and
$\widehat{G}_n = \emph{\texttt{ECM}}({\bf D}_n)$. Let $\triangle_n(\bar{B}_n(\varepsilon))$ denote the number of triangles in $\widehat{G}_n$ with at least one of the sampled degrees not in $B_n(\varepsilon)$. Then, 
\[
\limsup_{n\to\infty}\frac{\Exp{\triangle_n(\bar{B}_n(\varepsilon))}}{\mathcal{L}(\sqrt{\mu n})^3n^{\frac 32 (2-\gamma)}}=\bigO{\mathcal{E}_1(\varepsilon)}
\]
for some function $\mathcal{E}_1(\varepsilon)$ not depending on $n$ such that $\mathcal{E}_1(\varepsilon)\to 0$ as $\varepsilon\to 0$. 
\end{lemma}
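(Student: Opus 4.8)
The plan is to bound the conditional triangle probability edge by edge, reduce the double expectation over the random pairing and the random degrees to a three-fold integral over i.i.d.\ copies of $\mathscr{D}$, rescale the degrees by $\sqrt{\mu n}$, and then read off both the order $\mathcal{L}(\sqrt{\mu n})^3n^{\frac32(2-\gamma)}$ and an $\varepsilon$-dependent integral that vanishes as $\varepsilon\downarrow0$. The central pointwise input is the truncated product bound: since each $\widehat{X}_{ij}\in\{0,1\}$ and $\Probn{X_{ij}>0}\le 1\wedge\Expn{X_{ij}}\le 1\wedge\frac{D_iD_j}{L_n-1}$, one has on the event $\mathcal{A}_n$ of \eqref{eq:event_A_n}
\[
\Expn{\widehat{X}_{ij}\widehat{X}_{jk}\widehat{X}_{ik}}=\Probn{X_{ij}>0,X_{jk}>0,X_{ik}>0}\le C\Big(1\wedge\tfrac{D_iD_j}{\mu n}\Big)\Big(1\wedge\tfrac{D_jD_k}{\mu n}\Big)\Big(1\wedge\tfrac{D_iD_k}{\mu n}\Big).
\]
I would obtain this by revealing the pairings of the half-edges of $i$, $j$ and $k$ sequentially and applying the single-edge estimate behind Lemma~\ref{lem:taylor_connection_prob} to each of the three factors, the number of free half-edges remaining comparable to $L_n$ because $\max_{i}D_i=\op(n^{1/\gamma+\delta})=\op(n)$ by Proposition~\ref{prop:scaling_sums_powers_regularly_varying}(iii), and replacing $L_n$ by $\mu n$ via Lemma~\ref{lem:Kolmogorov_SLLN}. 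The truncation at $1$ is essential: dropping it would leave bare factors $D_iD_j/(\mu n)$ and make the degree integral diverge, since $\Exp{\mathscr{D}^2}=\infty$ for $\gamma<2$.

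Next I would pass to the expectation over the degrees. Since $\Prob{\mathcal{A}_n}\to1$ and $\triangle_n(\bar B_n(\varepsilon))\le\binom{n}{3}$ deterministically, the contribution of $\mathcal{A}_n^c$ is negligible against $n^{\frac32(2-\gamma)}$. On $\mathcal{A}_n$, symmetry of the i.i.d.\ degrees together with the union bound $\ind{D_i,D_j\text{ or }D_k\notin B_n(\varepsilon)}\le\ind{D_i\notin B_n(\varepsilon)}+\ind{D_j\notin B_n(\varepsilon)}+\ind{D_k\notin B_n(\varepsilon)}$ reduces the estimate to
\[
\Exp{\triangle_n(\bar B_n(\varepsilon))}\le C\,n^3\,\Exp{\Big(1\wedge\tfrac{D_1D_2}{\mu n}\Big)\Big(1\wedge\tfrac{D_2D_3}{\mu n}\Big)\Big(1\wedge\tfrac{D_1D_3}{\mu n}\Big)\ind{D_1\notin B_n(\varepsilon)}}.
\]
Writing this expectation as a Stieltjes integral against $\Prob{\mathscr{D}>\cdot}$ and substituting $D_\ell=\sqrt{\mu n}\,u_\ell$, the uniform convergence theorem for regularly-varying functions gives that each one-dimensional marginal, normalized by $\mathcal{L}(\sqrt{\mu n})(\mu n)^{-\gamma/2}$, converges to $\gamma u^{-\gamma-1}\dd u$ on $(0,\infty)$, while $\frac{D_\ell D_m}{\mu n}=u_\ell u_m$ and $\ind{D_1\notin B_n(\varepsilon)}=\ind{u_1\notin[\varepsilon,1/\varepsilon]}$. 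Collecting the three factors $\mathcal{L}(\sqrt{\mu n})(\mu n)^{-\gamma/2}$ and the prefactor $n^3$ produces $n^3(\mu n)^{-3\gamma/2}\mathcal{L}(\sqrt{\mu n})^3=\mu^{-3\gamma/2}\mathcal{L}(\sqrt{\mu n})^3n^{\frac32(2-\gamma)}$, so that
\[
\limsup_{n\to\infty}\frac{\Exp{\triangle_n(\bar B_n(\varepsilon))}}{\mathcal{L}(\sqrt{\mu n})^3n^{\frac32(2-\gamma)}}\le C\,\mu^{-\frac32\gamma}\,I(\varepsilon),\qquad
I(\varepsilon)=\int_{0}^{\infty}\!\!\int_{0}^{\infty}\!\!\int_{0}^{\infty}\frac{(1\wedge uv)(1\wedge vw)(1\wedge uw)}{(uvw)^{\gamma+1}}\ind{u\notin[\varepsilon,1/\varepsilon]}\,\dd u\,\dd v\,\dd w,
\]
and one may take $\mathcal{E}_1(\varepsilon)$ to be a constant multiple of $I(\varepsilon)$.

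Finally I would show $I(\varepsilon)\to0$ as $\varepsilon\downarrow0$ by dominated convergence. The integrand is dominated by $(1\wedge uv)(1\wedge vw)(1\wedge uw)(uvw)^{-\gamma-1}$, and since $1\wedge t\le \me\,(1-\me^{-t})$ for all $t\ge0$ this is at most $\me^{3}(1-\me^{-uv})(1-\me^{-uw})(1-\me^{-vw})(uvw)^{-\gamma-1}$, whose integral over $(0,\infty)^3$ equals $\me^{3}A_\gamma<\infty$ by \eqref{eq:Agamma}; directly, finiteness holds because near $u\to0$ the integrand is of order $u^{1-\gamma}$ (integrable as $\gamma<2$) and near $u\to\infty$ of order $u^{-\gamma-1}$ (integrable as $\gamma>0$), and symmetrically in $v,w$. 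As $\varepsilon\downarrow0$ the indicator $\ind{u\notin[\varepsilon,1/\varepsilon]}$ tends to $0$ for every fixed $u\in(0,\infty)$, so $I(\varepsilon)\to0$, giving the claim.

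The main obstacle is the truncated product bound on the conditional triangle probability: controlling the dependence between the three edges of the configuration model while retaining the truncation at $1$ in each factor is the delicate point, and everything upstream of it—the rescaling through the uniform convergence theorem and the dominated-convergence step reducing matters to $A_\gamma<\infty$—is routine once that bound is established.
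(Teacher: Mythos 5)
Your skeleton matches the paper's proof (truncated product bound on $\Probn{\triangle_{i,j,k}}$, symmetry plus union bound to put the indicator on $D_1$, rescaling $D_\ell=\sqrt{\mu n}\,u_\ell$, and finiteness of the limiting integral \eqref{eq:inttriangfinite} to make the $\varepsilon$-tail vanish), but two steps as you wrote them do not go through. First, your disposal of $\mathcal{A}_n^c$ is wrong: to absorb it you need $\binom{n}{3}\Prob{\mathcal{A}_n^c}=o\bigl(n^{\frac32(2-\gamma)}\bigr)$, i.e.\ $\Prob{\mathcal{A}_n^c}=o\bigl(n^{-\frac32\gamma}\bigr)$, but Lemma \ref{lem:Kolmogorov_SLLN} gives convergence in probability with no rate, and for i.i.d.\ regularly-varying summands the true order is $\Prob{|L_n-\mu n|>n^{1-\kappa}}\asymp n\,\Prob{\mathscr{D}>n^{1-\kappa}}\asymp \mathcal{L}(n^{1-\kappa})\,n^{-\kappa}$ with $\kappa=(\gamma-1)/(\gamma+1)<\tfrac13$, which is far larger than $n^{-\frac32\gamma}$. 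The same objection applies to your use of $\max_iD_i=\op(n^{1/\gamma+\delta})$ inside a bound on an unconditional expectation. The repair is to avoid polynomially-unlikely events altogether: the truncated product bound holds conditionally on the degrees with $L_n$ in the denominator, and one passes to $\mu n$ deterministically, e.g.\ via $L_n\geq c\,n$ (which fails only with exponentially small probability, by Chernoff applied to $\sum_i\ind{D_i\geq 1}$) together with $\bigl(1\wedge\frac{ab}{cn}\bigr)\leq C\bigl(1\wedge\frac{ab}{\mu n}\bigr)$; this is in effect what the paper does when it moves from its conditional bound to \eqref{eq:extriang}.

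Second, the step you call routine is exactly where the paper spends its hypotheses, and your justification of it is insufficient. The uniform convergence theorem gives only vague convergence of the rescaled marginal $\nu_n\bigl([u,\infty)\bigr)=(\mu n)^{\gamma/2}\mathcal{L}(\sqrt{\mu n})^{-1}\Prob{\mathscr{D}>u\sqrt{\mu n}}$ to $\gamma u^{-\gamma-1}\dd u$, uniformly only on compacts of $(0,\infty)$; the normalized mass of $\nu_n$ near $u=0$ diverges like $(\mu n)^{\gamma/2}/\mathcal{L}(\sqrt{\mu n})$, and the integrand $(1\wedge uv)(1\wedge vw)(1\wedge uw)$ is \emph{not} small on the boundary strips when the other coordinates are large (the truncations saturate at $1$ once $uv,uw\geq 1$), so neither dominated convergence nor vague convergence alone yields $\limsup_n\int f\,\dd\nu_n^{\otimes 3}\leq C\,I(\varepsilon)$; one needs a uniform-in-$n$ domination of the regions $u_\ell\to 0$ and $u_\ell\to\infty$ (degrees of order one, respectively of order larger than $\sqrt n$). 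The paper obtains this domination from Assumption \ref{ass:F}, which bounds the probability mass function by $K\mathcal{L}(x)x^{-\gamma-1}$, so that the triple sum is dominated by a triple integral to which \cite[Theorem 2]{karamata1962} applies; note that Lemma \ref{lem:sqrt} is only invoked under Theorem \ref{thm:CECM}, whose hypotheses include Assumption \ref{ass:F}, while your proposal never uses this assumption. A purely Stieltjes/Potter-bound argument may well be possible, but it requires exactly the boundary estimates you omit, so as written this is a genuine gap — and, conversely, the product bound you single out as the crux is a one-line citation in the paper (\cite[Lemma 4.1]{hofstad2017d}).
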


\begin{proof}
Let $\triangle_{i,j,k}$ denote the event that a triangle is present on vertices $i,j$ and $k$. By~\cite[Lemma 4.1]{hofstad2017d}, 
\begin{align*}
\Probn{\triangle_{i,j,k}}& =\Theta\Big(\prod_{(u,v)\in\{(i,j),(j,k),(i,k)\}}(1-\me^{-D_uD_v/L_n})\ind{D_uD_v<L_n}\Big)\\
& = \Theta\Bigg(\left(\frac{D_iD_j}{L_n} \wedge 1\right) \left(\frac{D_iD_k}{L_n} \wedge 1\right)\left(\frac{D_jD_k}{L_n}\wedge 1\right)\Bigg).
\end{align*}
Therefore, for some $\tilde{K}>0$
\begin{align*}
&\hspace{-20pt}\Expn{\triangle_n(\bar{B}_n(\varepsilon))}\\
&\leq \tilde{K}\sum_{\mathclap{1\leq i< j< k\leq n}}  \left(\frac{D_iD_j}{L_n} \wedge 1\right) \left(\frac{D_iD_k}{L_n} \wedge 1\right)\left(\frac{D_jD_k}{L_n}\wedge 1\right)\ind{D_i,D_j,\text{ or }D_k \in \bar{B}_n(\varepsilon)}.
\end{align*}
Thus,
\begin{equation}\label{eq:extriang}
\begin{aligned}[b]
&\Exp{\triangle_n(\bar{B}_n(\varepsilon))}\\
&\leq  \tilde{K}\tfrac12 n^3\Exp{\left(\frac{{D}_1{D}_2}{\mu n}\wedge 1\right)\left(\frac{{D}_1{D}_3}{\mu n} \wedge 1\right)\left(\frac{{D}_2{D}_3}{\mu n}\wedge 1\right)\ind{{D}_1 \in \bar{B}_n(\varepsilon)}}.
\end{aligned}
\end{equation}
We now show that the contribution to~\eqref{eq:extriang} where $D_1<\varepsilon\sqrt{\mu n}$ is small.
We write
\begin{align*}
&\Exp{\left(\frac{{D}_1{D}_2}{\mu n}\wedge 1\right)\left(\frac{{D}_1{D}_3}{\mu n}\wedge 1\right)\left(\frac{{D}_2{D}_3}{\mu n} \wedge 1\right)\ind{{D}_1<\varepsilon\sqrt{\mu n}}}\\
&\leq \tilde{K}\sum_{t_1=1}^{\varepsilon\sqrt{\mu n}}\sum_{t_2=1}^{n}\sum_{t_3=1}^{n}\Prob{D_1=t_1,D_2=t_2,D_3=t_3}\left(\frac{t_1t_2}{\mu n} \wedge 1\right)\left(\frac{t_1t_3}{\mu n} \wedge 1\right)\left(\frac{t_2t_3}{\mu n} \wedge 1\right)\\
&\leq \tilde{K}K^3 \hspace{-3pt} \sum_{t_1=1}^{\varepsilon\sqrt{\mu n}}\sum_{t_2,t_3=1}^{n} \hspace{-5pt} \mathcal{L}(t_1)\mathcal{L}(t_2)\mathcal{L}(t_3)(t_1t_2t_3)^{-\gamma-1}\left(\frac{t_1t_2}{\mu n} \wedge 1\right) \hspace{-2pt} \left(\frac{t_1t_3}{\mu n} \wedge 1\right) \hspace{-2pt}\left(\frac{t_2t_3}{\mu n} \wedge 1\right)
	\mumberthis \label{eq:exdsmall}
\end{align*}
where we used Assumption \ref{ass:F} and the fact that $D_1, D_2$ and $D_3$ are independent. 
By~\cite[Lemma~4.1(ii)]{hofstad2017d},
	\begin{align}\label{eq:inttriangfinite}
	\int_{0}^{\infty}\int_{0}^{\infty}\int_{0}^{\infty}(xyz)^{-\gamma-1} \left(xy \wedge 1\right) \left(xz \wedge 1\right) \left(yz \wedge 1\right)\dd x \dd y \dd z<\infty,
	\end{align}
for all $\gamma\in(1,2)$. Therefore, by~\cite[Theorem~2]{karamata1962},
\begin{align*}
&\int_{0}^{\varepsilon\sqrt{\mu n}}\!\!\int_{0}^{\infty}\int_{0}^{\infty}\!\!\mathcal{L}(t_1)\mathcal{L}(t_2)\mathcal{L}(t_3)(t_1t_2t_3)^{-\gamma-1} \left(\frac{t_1t_2}{\mu n} \wedge 1\right) \!\!\left(\frac{t_1t_3}{\mu n} \wedge 1\right)\!\! \left(\frac{t_2t_3}{\mu n} \wedge1\right)\!\!\dd t_3 \dd t_2 \dd t_1\\
&= (\mu n)^{-\frac{3}{2}\gamma}\int_{0}^{\varepsilon}\!\!\int_{0}^{\infty}\int_{0}^{\infty}\!\!\mathcal{L}(x\sqrt{\mu n})\mathcal{L}(y\sqrt{\mu n})\mathcal{L}(z\sqrt{\mu n})(xyz)^{-\gamma-1} \left(xy\wedge 1\right) \!\!\left(xz\wedge 1\right)\!\! \left(yz\wedge 1\right)\!\!\dd x \dd y \dd z\\
& = (1+o(1))\mathcal{L}(\sqrt{\mu n})^3\!\!\int_{0}^{\varepsilon}\!\!\int_{0}^{\infty}\int_{0}^{\infty}(xyz)^{-\gamma-1} \left(xy\wedge 1\right) \!\!\left(xz\wedge 1\right)\!\! \left(yz\wedge 1\right)\!\!\dd x \dd y \dd z
\end{align*}

We then bound the sum in~\eqref{eq:exdsmall} as 
\begin{align*}
&\sum_{t_1=1}^{\varepsilon\sqrt{\mu n}}\sum_{t_2=1}^{n}\sum_{t_3=1}^{n}\mathcal{L}(t_1)\mathcal{L}(t_2)\mathcal{L}(t_3)(t_1t_2t_3)^{-\gamma-1}\left(\frac{t_1t_2}{\mu n} \wedge 1\right) \left(\frac{t_1t_3}{\mu n} \wedge 1\right) \left(\frac{t_2t_3}{\mu n} \wedge 1\right)\\
& \leq 2\int_{0}^{\varepsilon\sqrt{\mu n}+1}\int_{0}^{\infty}\int_{0}^{\infty}\mathcal{L}(t_1)\mathcal{L}(t_2)\mathcal{L}(t_3)(t_1t_2t_3)^{-\gamma-1}\\
& \quad \times \left(\frac{t_1t_2}{\mu n} \wedge 1\right) \left(\frac{t_1t_3}{\mu n} \wedge 1\right) \left(\frac{t_2t_3}{\mu n} \wedge1\right)\dd t_3 \dd t_2 \dd t_1\\
	& =2(1+o(1))(\mu n)^{-\frac{3}{2}\gamma}\mathcal{L}(\sqrt{\mu n})^3
\int_{0}^{\varepsilon} \hspace{-3pt} \int_{0}^{\infty} \hspace{-3pt} \int_{0}^{\infty}(xyz)^{-\gamma-1} \left(xy \wedge 1\right) \left(xz \wedge 1\right) \left(yz \wedge 1\right)\dd x \dd y \dd z.
\end{align*}
Therefore,
\begin{equation}\label{eq:exslowly}
\begin{aligned}[b]
&\Exp{\left(\frac{{D}_1{D}_2}{\mu n} \wedge 1\right)\left(\frac{{D}_1{D}_3}{\mu n} \wedge 1\right) \left(\frac{{D}_2{D}_3}{\mu n} \wedge 1\right)\ind{{D}_1<\varepsilon\sqrt{\mu n}}}\\
&\leq K_2 n^{\frac{3}{2}(1-\gamma)}\mathcal{L}(\sqrt{\mu n})^3
\int_{0}^{\varepsilon} \hspace{-3pt} \int_{0}^{\infty} \hspace{-3pt} \int_{0}^{\infty}(xyz)^{-\gamma-1} \left(xy \wedge 1\right) \left(xz \wedge 1\right) \left(yz \wedge 1\right)\dd x \dd y \dd z,
\end{aligned}
\end{equation}
for some constant $K_2$.
Thus, we only need to prove that the last triple integral in~\eqref{eq:exslowly} tends to zero as $\varepsilon\to 0$. 
Using~\eqref{eq:inttriangfinite}, we obtain
\[
\int_{0}^{\varepsilon}\int_{0}^{\infty}\int_{0}^{\infty}(xyz)^{-\gamma-1} \left(xy \wedge 1\right) \left(xz \wedge 1\right) \left(yz \wedge 1\right)\dd x \dd y \dd z := \mathcal{E}_0(\varepsilon)
\]
where $\mathcal{E}_0(\varepsilon)$ is such that $\mathcal{E}_0(\varepsilon)\to 0$ as $\varepsilon\to 0$. 
Thus, by~\eqref{eq:extriang} and~\eqref{eq:exdsmall}, the contribution to the expectation where one of the degrees is smaller than $\varepsilon\sqrt{\mu n}$ is bounded by $\bigOp{\mathcal{L}(\sqrt{\mu n})^3n^{3-\frac 3 2 \gamma}\mathcal{E}_0(\varepsilon)}$. 
Similarly,
\[
\int_{1/\varepsilon}^\infty\int_{0}^{\infty}\int_{0}^{\infty}(xyz)^{-\gamma-1} \left(xy \wedge 1\right) \left(xz \wedge 1\right) \left(yz \wedge 1\right)\dd x \dd y \dd z := \mathcal{E}_0^\prime(\varepsilon)
\]
where again $\mathcal{E}_0^\prime(\varepsilon)$ satisfies $\mathcal{E}_0^\prime(\varepsilon)\to 0$ as $\varepsilon\to 0$. Therefore, we can show in a similar way that the contribution to the expected number of triangles where one of the degrees is larger than $\sqrt{\mu n}/\varepsilon$ is $\bigOp{\mathcal{L}(\sqrt{\mu n})^3 n^{3-\frac 3 2 \gamma}\mathcal{E}_0^\prime(\varepsilon)}$. Then, taking $\mathcal{E}_1(\varepsilon)=\max(\mathcal{E}_0(\varepsilon),\mathcal{E}_0^\prime(\varepsilon))$ proves the lemma.
\end{proof}

The next lemma computes the expected contribution of the vertices of sampled degree proportional to $\sqrt{n}$ to the number of triangles.
Define
\begin{equation}\label{eq:g}
g_{n, \varepsilon} (D_i,D_j,D_k):=\left(1-\me^{-\frac{D_iD_j}{L_n}}\right)\left(1-\me^{-\frac{D_iD_k}{L_n}}\right)\left(1-\me^{-\frac{D_kD_j}{L_n}}\right)\ind{(D_i,D_j,D_k)\in B_n(\varepsilon)},
\end{equation}
and let $\sum^\prime$ denote a sum over distinct indices, such that $i<j<k$.

\begin{lemma}\label{lem:extriang}
		Let ${\bf D}_n$ be sampled from $\mathscr{D}$ with $1< \gamma < 2$, and
		$\widehat{G}_n = \emph{\texttt{ECM}}({\bf D}_n)$. Let $\triangle_n(B_n(\varepsilon))$ denote the number of triangles in $\widehat{G}_n$ with sampled degrees in $B_n(\varepsilon)$. Then, on the event $\mathcal{A}_n$ as defined in~\eqref{eq:event_A_n},
	\[
	\Expn{\triangle_n(B_n(\varepsilon))}=(1+o(1))\sideset{}{'}\sum_{i,j,k} g_{n, \varepsilon}(D_i,D_j,D_k).
		\]
\end{lemma}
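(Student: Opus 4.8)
The plan is to reduce everything to a uniform single-triple estimate. Starting from the identity
\[
	\Expn{\triangle_n(B_n(\varepsilon))} = \sideset{}{'}\sum_{i,j,k}\Probn{\triangle_{i,j,k}}\ind{(D_i,D_j,D_k)\in B_n(\varepsilon)},
\]
the whole statement follows once we show that, uniformly over triples with $(D_i,D_j,D_k)\in B_n(\varepsilon)$ and on the event $\mathcal{A}_n$,
\[
	\Probn{\triangle_{i,j,k}} = (1+o(1))\Big(1-\me^{-D_iD_j/L_n}\Big)\Big(1-\me^{-D_iD_k/L_n}\Big)\Big(1-\me^{-D_jD_k/L_n}\Big),
\]
with the $o(1)$ deterministic and independent of the particular triple. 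Because the relative error is then uniform, it can be pulled outside the sum over the $\Theta(n^3)$ triples, which is exactly the asserted $(1+o(1))\sideset{}{'}\sum g_{n,\varepsilon}$.

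To obtain this factorization I would run inclusion--exclusion on the complementary no-edge events. Writing $A=\{X_{ij}=0\}$, $B=\{X_{jk}=0\}$, $C=\{X_{ik}=0\}$, we have $\Probn{\triangle_{i,j,k}}=\Probn{A^cB^cC^c}$, which expands into the eight joint no-edge probabilities $\Probn{\bigcap_{(u,v)\in S}\{X_{uv}=0\}}$ indexed by subsets $S$ of the three pairs. The technical core is to show that each such joint no-edge probability equals $\prod_{(u,v)\in S}\me^{-D_uD_v/L_n}$ up to an additive error $O(n^{-1/2})$. I would do this by revealing the pairing vertex by vertex, first exhausting the half-edges of $i$ while forbidding pairings to $j$ and $k$, then those of $j$ forbidding $k$, and bounding each stage by the Taylor-type estimate of Lemma~\ref{lem:taylor_connection_prob}, exactly as in the derivation of~\eqref{eq:ecm_pearson_diff_X_ij_exp_lower}--\eqref{eq:ecm_pearson_diff_X_ij_exp_upper}. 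The depletion of half-edges and the gap between the exact product and the exponential both contribute absolute errors of order $D_i^2D_j/L_n^2=O(n^{-1/2})$, since $D_u,D_v=\Theta(\sqrt n)$ and $L_n=\Theta(n)$ on $\mathcal{A}_n$. Substituting these eight factorized estimates into the inclusion--exclusion sum collapses it to $\prod_{(u,v)}\big(1-\me^{-D_uD_v/L_n}\big)$.

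The restriction to $B_n(\varepsilon)$ is precisely what makes the error bookkeeping clean, and I expect it to be the crux rather than a genuine obstruction: for $(D_i,D_j,D_k)\in B_n(\varepsilon)$ each argument $D_uD_v/L_n$ lies in a compact subinterval of $(0,\infty)$, bounded below by roughly $\varepsilon^2$ and above by roughly $\varepsilon^{-2}$ using $L_n\approx\mu n$, so every factor $1-\me^{-D_uD_v/L_n}$ is bounded away from $0$ and bounded by $1$. Hence the product is bounded below by a positive constant $c_\varepsilon$, and the $O(n^{-1/2})$ \emph{absolute} errors from inclusion--exclusion become $O(n^{-1/2})=o(1)$ \emph{relative} errors on the product. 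The genuine difficulty is controlling the dependence among the three no-edge events induced by the half-edges shared at the common vertices, and verifying that the sequential-pairing bounds from Lemma~\ref{lem:taylor_connection_prob} are uniformly $o(1)$ across the entire range $B_n(\varepsilon)$; once this uniformity is in hand, extracting the error from the sum is immediate and completes the proof.
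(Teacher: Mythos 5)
Your proposal is correct and follows essentially the same route as the paper: the identical inclusion--exclusion expansion of $\Probn{\triangle_{i,j,k}}$ over the joint no-edge events as in \eqref{eq:Pijk}, the same factorization of each joint no-edge probability into products of $\me^{-D_uD_v/L_n}$, and the same key observation that on $B_n(\varepsilon)$ (with $L_n=\mu n(1+o(1))$ on $\mathcal{A}_n$) the product $\prod_{(u,v)}\bigl(1-\me^{-D_uD_v/L_n}\bigr)$ is uniformly bounded away from $0$ and above, so the additive errors become a uniform relative $(1+o(1))$ that can be pulled out of the nonnegative sum. The only difference is cosmetic: where you re-derive the joint no-edge factorization from scratch via sequential pairing and the Taylor-type estimate of Lemma~\ref{lem:taylor_connection_prob}, the paper simply invokes the conditional edge-probability result of \cite[Lemma 3.1]{hofstad2017d}.
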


\begin{proof}
We can write the expectation as
\begin{equation}\label{eq:expBne}
	\Expn{\triangle_n(B_n(\varepsilon))}= \sideset{}{'}\sum_{i,j,k}\Probn{\triangle_{i,j,k}}\ind{(D_i,D_j,D_k)\in B_n(\varepsilon)},
\end{equation}
where $\Probn{\triangle_{i,j,k}}$ denotes the probability of a triangle between vertices $i,j,k$ being present. This probability can be written as
\begin{equation}\label{eq:Pijk}
\begin{aligned}[b]
\Probn{\triangle_{i,j,k}} &=
1-\Probn{X_{ij}=0}-\Probn{X_{ik}=0}-\Probn{X_{jk}=0}+ \Probn{X_{ij}=X_{ik}=0}\\
&\quad +\Probn{X_{ij}=X_{jk}=0}+\Probn{X_{ik}=X_{jk}=0}\\
&\quad -\Probn{X_{ij}=X_{ik}=X_{jk}=0}.
\end{aligned}
\end{equation}
Because $D_i,D_j,D_k\leq \sqrt{n}/\varepsilon$ and $L_n=\mu n(1+o(1))$ under the event $\mathcal{A}_n$, we can use~\cite[Lemma 3.1]{hofstad2017d}, which calculates the probability that an edge is present conditionally on the presence of other edges in configuration models with arbitrary degree distributions. This results in  
\begin{align*}
&\hspace{-30pt}\Probn{X_{ij}=X_{ik}=X_{jk}=0}\\
&=\Probn{X_{ij}=0}\Probn{X_{jk}=0\mid X_{ij}=0}\Probn{X_{ik}=0\mid X_{ij}=X_{jk}=0}\\
& = \me^{-\frac{D_i D_j}{L_n}}\me^{-\frac{D_j D_k}{L_n}}\me^{-\frac{D_i D_k}{L_n}}(1+o(1)),
\end{align*} 
and similarly
\[
\Probn{X_{ij}=X_{jk}=0}=\me^{-\frac{D_i D_j}{L_n}}\me^{-\frac{D_j D_k}{L_n}}(1+o(1)).
\]

Combining this with~\eqref{eq:Pijk} yields
\begin{align*}
\Probn{\triangle_{i,j,k}}
&=1-\Big(\me^{-\frac{D_i D_j}{L_n}}+\me^{-\frac{D_i D_j}{L_n}}+\me^{-\frac{D_j D_k}{L_n}}\Big)(1+o(1))\\
& \quad +\Big(\me^{-\frac{D_i D_j}{L_n}}\me^{-\frac{D_j D_k}{L_n}}+\me^{-\frac{D_i D_j}{L_n}}\me^{-\frac{D_i D_k}{L_n}}+\me^{-\frac{D_i D_k}{L_n}}\me^{-\frac{D_j D_k}{L_n}}\Big)
(1+o(1))\\
&\quad -\me^{-\frac{D_i D_j}{L_n}}\me^{-\frac{D_j D_k}{L_n}}\me^{-\frac{D_i D_k}{L_n}}(1+\op(1))\\
& = (1+o(1))\left(1-\me^{-\frac{D_i D_j}{L_n}}\right)\left(1-\me^{-\frac{D_j D_k}{L_n}}\right)\left(1-\me^{-\frac{D_j D_k}{L_n}}\right),
	\mumberthis \label{eq:pind}
\end{align*}
where the second equality follows because $\varepsilon\sqrt{n}<D_i,D_j,D_k<\sqrt{n}/\varepsilon$ and $L_n=\mu n(1+o(1))$ under $\mathcal{A}_n$.
For $D_i,D_j,D_k\in[\varepsilon\sqrt{n},1/\varepsilon\sqrt{n}]$, the main term in~\eqref{eq:pind} can be uniformly bounded from above and from below by some functions $f_1(\varepsilon)$ and $f_2(\varepsilon)$ not depending on $n$. 
Combining this with~\eqref{eq:expBne} shows that
\begin{equation}\label{eq:exgn}
	\Expn{\triangle_n(B_n(\varepsilon))}=(1+o(1)) \sideset{}{'}\sum_{i,j,k}g_{n,\varepsilon}(D_i,D_j,D_k),
\end{equation}
on the event $\mathcal{A}_n$, which proves the lemma.
\end{proof}

We now replace the $L_n$ inside the definition of $g_{n,\varepsilon}$ by $\mu n$. In particular, define
\begin{equation}\label{eq:def_f_n_epsilon}
f_{n, \varepsilon}(x,y,z) = (1 - \me^{-\frac{xy}{\mu n}}) (1 - \me^{-\frac{yz}{\mu n}}) (1 - \me^{-\frac{zx}{\mu n}})
\ind{x, y, z \in B_n(\varepsilon)},
\end{equation}
then we have the following result:

\begin{lemma}\label{lem:g_n_epsilon_to_f_n_epsilon}
	Let ${\bf D}_n$ be sampled from $\mathscr{D}$ with $1< \gamma < 2$ and $\kappa = (\gamma - 1)/(1 + \gamma) > 0$. Then, for all $\varepsilon > 0$ and $\delta < \kappa$,
	\[
	n^{\frac{3 \gamma}{2} - 3 + \delta} \left|\sum_{1\leq i< j< k\leq n} g_{n, \varepsilon}(D_i, D_j, D_k) 
	- f_{n, \varepsilon}(D_i, D_j, D_k)\right| \plim 0.
	\]
\end{lemma}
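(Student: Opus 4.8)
The plan is to run the entire argument on the high-probability event $\mathcal{A}_n = \{|L_n - \mu n| \le n^{1-\kappa}\}$ from \eqref{eq:event_A_n}, which by Lemma~\ref{lem:Kolmogorov_SLLN} satisfies $\Prob{\mathcal{A}_n} \to 1$; since the target convergence is in probability, it suffices to control the sum on $\mathcal{A}_n$. On $\mathcal{A}_n$ we have $L_n \ge \mu n/2$ for $n$ large, and hence the closeness of $L_n$ to $\mu n$ enters only through
\[
	\left|\frac{1}{L_n} - \frac{1}{\mu n}\right| = \frac{|L_n - \mu n|}{L_n \,\mu n} \le \frac{2}{\mu^2}\, n^{-1-\kappa}.
\]

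First I would bound $|g_{n,\varepsilon} - f_{n,\varepsilon}|$ pointwise. Writing $g_{n,\varepsilon}$ and $f_{n,\varepsilon}$ as products of three factors of the form $1 - \me^{-\cdot}$, all lying in $[0,1]$, a telescoping identity bounds $|g_{n,\varepsilon}(D_i,D_j,D_k) - f_{n,\varepsilon}(D_i,D_j,D_k)|$ by the sum of the three single-factor differences, each carrying the indicator $\ind{D_i,D_j,D_k \in B_n(\varepsilon)}$. Using $|\me^{-a} - \me^{-b}| \le |a-b|$ together with the displayed estimate, the single-factor difference for the pair $(i,j)$ is at most $\tfrac{2}{\mu^2} D_iD_j\, n^{-1-\kappa}$. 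On the indicator event each degree is at most $\sqrt{\mu n}/\varepsilon$, so $D_iD_j \le \mu n/\varepsilon^2$, and the single-factor difference is at most $\tfrac{2}{\mu\varepsilon^2} n^{-\kappa}$. Summing the three contributions yields, on $\mathcal{A}_n$,
\[
	|g_{n,\varepsilon}(D_i,D_j,D_k) - f_{n,\varepsilon}(D_i,D_j,D_k)| \le \frac{6}{\mu\varepsilon^2}\, n^{-\kappa}\,\ind{D_i,D_j,D_k \in B_n(\varepsilon)}.
\]

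It then remains to sum over triples and track the scaling. Let $N_n(\varepsilon) = \sum_{i=1}^n \ind{D_i \in B_n(\varepsilon)}$ count the vertices with sampled degree in $B_n(\varepsilon)$, so that the pointwise bound gives $\sum_{1\le i<j<k\le n}|g_{n,\varepsilon} - f_{n,\varepsilon}| \le \tfrac{1}{\mu\varepsilon^2}\,n^{-\kappa}\,N_n(\varepsilon)^3$. From the regularly-varying tail \eqref{eq:distribution_degrees} and slow variation of $\mathcal{L}$ one has $\Exp{N_n(\varepsilon)} \le n\,\Prob{\mathscr{D} \ge \varepsilon\sqrt{\mu n}} \le C_\varepsilon\, \mathcal{L}(\sqrt{\mu n})\, n^{1-\gamma/2}$, so Markov's inequality yields $N_n(\varepsilon) = \bigOp{\mathcal{L}(\sqrt{\mu n})\, n^{1-\gamma/2}}$ and hence $N_n(\varepsilon)^3 = \bigOp{\mathcal{L}(\sqrt{\mu n})^3\, n^{3-3\gamma/2}}$. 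Multiplying through by $n^{3\gamma/2 - 3 + \delta}$ produces a bound of order $n^{\delta - \kappa}\mathcal{L}(\sqrt{\mu n})^3$, which is $\op(1)$ precisely because $\delta < \kappa$ and the slowly-varying factor is absorbed by the strictly negative power. Combining this with $\Prob{\mathcal{A}_n}\to 1$ completes the proof.

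The argument is essentially routine once the telescoping decomposition and the Lipschitz bound $|\me^{-a} - \me^{-b}|\le|a-b|$ are in place; the only subtle point is that the smallness of each factor difference comes entirely from the restriction to $B_n(\varepsilon)$, which caps $D_iD_j$ at order $n$ and converts the $n^{-1-\kappa}$ from the $L_n$-versus-$\mu n$ discrepancy into an $n^{-\kappa}$ gain. Against the $n^{3-3\gamma/2}$ growth of the triple count this beats the $n^{3\gamma/2-3}$ normalization by the fixed margin $n^{-\kappa}$, leaving exactly the room $\delta<\kappa$ demands.
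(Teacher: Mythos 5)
Your proof is correct, and it follows the same overall skeleton as the paper's: restrict to the event $\mathcal{A}_n=\{|L_n-\mu n|\le n^{1-\kappa}\}$, reduce to single-factor differences of the exponentials (your telescoping is the paper's ``by symmetry it suffices'' step, since all factors lie in $[0,1]$), and apply the Lipschitz bound $|\me^{-a}-\me^{-b}|\le|a-b|$ to get a $D_iD_j\cdot O(n^{-1-\kappa})$ estimate per factor. Where you genuinely diverge is the final counting step. The paper keeps the random weight $D_iD_j$ in the bound and controls the expected value of the full triple sum via Karamata's theorem (through the estimate $\Exp{D_1\ind{D_1\in B_n(\varepsilon)}}=O(\mathcal{L}(\varepsilon\sqrt{\mu n})\,n^{(1-\gamma)/2})$ and Lemma~\ref{lem:plvarying}), then applies Markov's inequality to the sum of $\binom{n}{3}$ terms. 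You instead exploit the deterministic cap $D_iD_j\le\mu n/\varepsilon^2$ available on the indicator, which converts the $n^{-1-\kappa}$ discrepancy into a uniform $n^{-\kappa}$ per triple, and then you only need stochastic control of the vertex count $N_n(\varepsilon)$, obtained from the bare tail probability and Markov's inequality. Both are first-moment arguments landing on the same $O(n^{\delta-\kappa})$ bound up to slowly varying factors (absorbed by Potter since $\delta<\kappa$ is strict), but your route is more elementary — it bypasses Karamata entirely — at the mild cost of constants blowing up as $\varepsilon\to 0$, which is harmless here because $\varepsilon$ is fixed in the lemma. One small point worth making explicit if you write this up: the telescoping requires that $g_{n,\varepsilon}$ and $f_{n,\varepsilon}$ carry the \emph{same} indicator $\ind{D_i,D_j,D_k\in B_n(\varepsilon)}$, which is indeed the case by \eqref{eq:g} and \eqref{eq:def_f_n_epsilon}; you use this implicitly when pulling the indicator out of the difference.
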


\begin{proof}
Let $\mathcal{A}_n$ be as in \eqref{eq:event_A_n} and note that $\kappa$ is the same as in the statement of the lemma. Then, since $\Prob{\mathcal{A}_n} \to 1$, it is enough to prove that the result conditioned on $\mathcal{A}_n$. Next, note that
$1 - \me^{-a} \le 1$ for all $a \ge 0$. Hence, due to symmetry it suffices to prove
\[
n^{\frac{3 \gamma}{2} - 3 + \delta} \sum_{1\leq i< j< k\leq n} \left|\me^{-\frac{D_i D_j}{L_n}} -\me^{-\frac{D_i D_j}{\mu n}}\right| \ind{D_i, D_j, D_k \in B_n(\varepsilon)} \plim 0.
\]
For this we compute that, on the event $\mathcal{A}_n$,
\[
\left|\me^{-\frac{D_i D_j}{L_n}} - \me^{-\frac{D_i D_j}{\mu n}}\right| \le D_i D_j \frac{|L_n - \mu n|}{\mu n L_n}
\le D_i D_j \frac{n^{- 1 - \kappa}}{(\mu^2 - n^{-\kappa})} = D_i D_j \bigO{n^{-1-\kappa}}.
\]
We recall that by Karamata's Theorem \cite[Theorem 1.5.11]{Bingham1989} it follows that, as $n \to \infty$,
\begin{align*}
\Exp{D_1 \ind{D_1\in B_n(\varepsilon)}} &\le \Exp{D_1 \ind{D_1 > \varepsilon \sqrt{\mu n}}}
\sim \frac{\mathcal{L}\left(\varepsilon\sqrt{\mu n}\right)\varepsilon^{1-\gamma}(\mu n)^{\frac{1-\gamma}{2}}}{\gamma - 1} \\
&= O\left(\mathcal{L}\left(\varepsilon\sqrt{\mu n}\right)n^{\frac{1-\gamma}{2}}\right),
\end{align*}
where $a_n \sim b_n$ as $n \to \infty$ means $\lim_{n\to\infty} a_n/b_n = 1$.

Therefore, using Lemma \ref{lem:plvarying},
\begin{align*}
&\hspace{-30pt}\Exp{\left|\me^{-\frac{D_1 D_2}{L_n}} - \me^{-\frac{D_1 D_2}{\mu n}}\right| \ind{D_1, D_2, D_3 \in B_n(\varepsilon)}}\\
&\le \bigO{n^{-1-\kappa}} \Exp{D_1 \ind{D_1 \in B_n(\varepsilon)}}^2 \Prob{D_3 \in B_n(\varepsilon)} \\
&= \bigO{\mathcal{L}\left(\varepsilon\sqrt{\mu n}\right)^2\mathcal{L}(\sqrt{\mu n}) n^{-(\kappa + \frac{3\gamma}{2})}},
\end{align*}
so that by Markov's inequality, we obtain that for any $K > 0$ and $\varepsilon > 0$,
\begin{align*}
&\hspace{-30pt}\Prob{n^{\frac{3 \gamma}{2} - 3 + \delta} \sum_{1\leq i< j< k\leq n} \left|\me^{-\frac{D_i D_j}{L_n}} - \me^{-\frac{D_i D_j}{\mu n}}\right| \ind{D_i, D_j, D_k \in B_n(\varepsilon)} > K, \mathcal{A}_n} \\
&\le \frac{\binom{n}{3} n^{\frac{3 \gamma}{2} - 3 + \delta}}{K} 
\Exp{\left|\me^{-\frac{D_1 D_2}{L_n}} - \me^{-\frac{D_1 	D_2}{\mu n}}\right| \mathbbm{1}_{\mathcal{A}_n} \ind{D_1, D_2, D_3 \in B_n(\varepsilon)}} \\
&= \bigO{\mathcal{L}\left(\varepsilon\sqrt{\mu n}\right)^2\mathcal{L}(\sqrt{\mu n})n^{\delta - \kappa}} = o(1).
\end{align*}
\end{proof}

We now again study the expected number of triangles of vertices with sampled degrees in $B_n(\varepsilon)$, and investigate the behavior of the expression of Lemma~\ref{lem:extriang}.
\begin{lemma}\label{lem:extriangconv}
	Let ${\bf D}_n$ be sampled from $\mathscr{D}$ with $1< \gamma < 2$, and
	$\widehat{G}_n = \emph{\texttt{ECM}}({\bf D}_n)$. Let $\triangle_n(B_n(\varepsilon))$ denote the number of triangles in $\widehat{G}_n$ with sampled degrees in $B_n(\varepsilon)$. Then, as $n\to\infty$,
	\begin{equation}\label{eq:gconlem}
	\begin{aligned}
	&\hspace{-10pt}\frac{\Expn{\triangle_n(B_n(\varepsilon))}}{\mathcal{L}(\sqrt{\mu n})^3n^{\frac{3}{2}(2-\gamma)} \mu^{-\frac{3}{2}\gamma}}  \\
	&= (1+\op(1))\frac{1}{6}\int_{\varepsilon}^{1/\varepsilon} \int_{\varepsilon}^{1/\varepsilon} \int_{\varepsilon}^{1/\varepsilon}  \frac{\gamma^3}{(t_1t_2t_3)^{\gamma+1}}h(t_1,t_2,t_3)\dd t_1\dd t_2\dd t_3+\bigOp{\varepsilon},
	\end{aligned}
	\end{equation}
	where $h(x,y,z):=(1-\mathrm{e}^{-xy})(1-\mathrm{e}^{-xz})(1-\mathrm{e}^{-yz})$.
	\end{lemma}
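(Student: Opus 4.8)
The plan is to reduce the conditional expectation to a $U$-statistic in the i.i.d.\ degrees and then analyse it through its first two moments. First I would apply Lemma~\ref{lem:extriang}, which on the event $\mathcal{A}_n$ of \eqref{eq:event_A_n} (with $\Prob{\mathcal{A}_n}\to1$) gives $\Expn{\triangle_n(B_n(\varepsilon))}=(1+o(1))\sum'_{i,j,k}g_{n,\varepsilon}(D_i,D_j,D_k)$, and then Lemma~\ref{lem:g_n_epsilon_to_f_n_epsilon} to replace $g_{n,\varepsilon}$ by $f_{n,\varepsilon}$. Since the latter controls the difference at scale $n^{\frac{3\gamma}{2}-3+\delta}$, dividing by $\mathcal{L}(\sqrt{\mu n})^3n^{\frac{3}{2}(2-\gamma)}$ and using Potter's theorem to absorb the slowly varying factor into $n^{-\delta}$ shows the replacement is negligible. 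It therefore suffices to prove
\[
\frac{\sum'_{i,j,k}f_{n,\varepsilon}(D_i,D_j,D_k)}{\mathcal{L}(\sqrt{\mu n})^3n^{\frac{3}{2}(2-\gamma)}\mu^{-\frac{3}{2}\gamma}}\plim\frac{\gamma^3}{6}\int_\varepsilon^{1/\varepsilon}\int_\varepsilon^{1/\varepsilon}\int_\varepsilon^{1/\varepsilon}\frac{h(t_1,t_2,t_3)}{(t_1t_2t_3)^{\gamma+1}}\dd t_1\dd t_2\dd t_3,
\]
up to the stated $\bigOp{\varepsilon}$ error.

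The core of the argument is the first-moment computation. Writing $\Exp{f_{n,\varepsilon}(D_1,D_2,D_3)}$ as a triple sum over $t_1,t_2,t_3$, I would substitute $t_i=x_i\sqrt{\mu n}$, so that each ratio $t_it_j/(\mu n)$ becomes $x_ix_j$, the constraint $t_i\in B_n(\varepsilon)$ becomes $x_i\in[\varepsilon,1/\varepsilon]$, and the integrand becomes $h(x_1,x_2,x_3)$. Using the local asymptotics $\Prob{\mathscr{D}=t}\sim\gamma\mathcal{L}(t)t^{-\gamma-1}$ (valid for the tail \eqref{eq:distribution_degrees}, with Assumption~\ref{ass:F} supplying the matching upper bound for domination) and the uniform convergence theorem for slowly varying functions to replace $\mathcal{L}(x_i\sqrt{\mu n})$ by $\mathcal{L}(\sqrt{\mu n})$ on $[\varepsilon,1/\varepsilon]$, each of the three one-dimensional sums converges after rescaling to $\gamma\mathcal{L}(\sqrt{\mu n})(\mu n)^{-\gamma/2}\int_\varepsilon^{1/\varepsilon}x^{-\gamma-1}(\,\cdot\,)\dd x$. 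Collecting the three factors gives that $\binom{n}{3}\Exp{f_{n,\varepsilon}(D_1,D_2,D_3)}$ equals, up to a $(1+o(1))$ factor, $\frac{\gamma^3}{6}\mathcal{L}(\sqrt{\mu n})^3\mu^{-\frac{3}{2}\gamma}n^{\frac{3}{2}(2-\gamma)}$ times the triple integral over $[\varepsilon,1/\varepsilon]^3$, which is precisely the claimed main term after normalisation; here the factor $\gamma^3$ is the product of the three $\gamma$'s appearing in $\Prob{\mathscr{D}=t}\sim\gamma\mathcal{L}(t)t^{-\gamma-1}$.

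It remains to show that the $U$-statistic $U_n:=\sum'_{i,j,k}f_{n,\varepsilon}(D_i,D_j,D_k)$ concentrates on its mean. Since $0\le f_{n,\varepsilon}\le1$ and the kernel is supported on $B_n(\varepsilon)^3$, I would use the Hoeffding decomposition: the variance of an order-$3$ $U$-statistic is dominated by its first-order term, $\Var{U_n}=\Theta(n^5\zeta_1)$ with $\zeta_1=\Var{\CExp{f_{n,\varepsilon}(D_1,D_2,D_3)}{D_1}}$. Bounding $1-\me^{-a}\le(1\wedge a)$ and invoking Karamata's theorem (Lemma~\ref{lem:karamata}) exactly as in the proof of Lemma~\ref{lem:sqrt}, the truncation $D_i>\varepsilon\sqrt{\mu n}$ yields $\zeta_1=\bigO{\mathcal{L}(\sqrt{\mu n})^5n^{-5\gamma/2}}$, hence $\Var{U_n}=\bigO{\mathcal{L}(\sqrt{\mu n})^5n^{5-5\gamma/2}}$, whereas $(\Exp{U_n})^2=\Theta(\mathcal{L}(\sqrt{\mu n})^6n^{6-3\gamma})$. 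Their ratio is $\bigO{n^{\gamma/2-1}/\mathcal{L}(\sqrt{\mu n})}$, which tends to $0$ because $\gamma<2$; Chebyshev's inequality then gives $U_n/\Exp{U_n}\plim1$, and combining this with the first-moment computation and the two reduction steps finishes the proof, the residual approximation errors being collected into the $\bigOp{\varepsilon}$ term that vanishes in the $\varepsilon\downarrow0$ limit taken afterwards.

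I expect the main obstacle to be the uniform passage from the discrete triple sum to the triple integral: one must control $\mathcal{L}$ uniformly over the growing window $B_n(\varepsilon)=[\varepsilon\sqrt{\mu n},\sqrt{\mu n}/\varepsilon]$ and justify dominated convergence despite the singularity $(t_1t_2t_3)^{-\gamma-1}$ at the lower boundary, for which the bound $1-\me^{-t_it_j/(\mu n)}\le t_it_j/(\mu n)$ together with the finiteness in \eqref{eq:inttriangfinite} is essential. By contrast, the variance bound, although it requires care because the kernel depends on $n$, is comparatively routine once the kernel has been truncated to $B_n(\varepsilon)$ and is thus bounded.
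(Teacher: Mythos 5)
Your reduction steps (Lemma~\ref{lem:extriang}, then Lemma~\ref{lem:g_n_epsilon_to_f_n_epsilon} with Potter's theorem absorbing $\mathcal{L}(\sqrt{\mu n})^3$ into $n^{-\delta}$) coincide with the paper's, but from there you take a genuinely different route. The paper encodes the degree randomness in the rescaled empirical measure $N_1^{\sss{(n)}}$, proves $N_1^{\sss{(n)}}([a,b]) \plim \lambda([a,b])=a^{-\gamma}-b^{-\gamma}$ by binomial concentration together with Lemma~\ref{lem:plvarying}, and then — because the triple integral against the product measure does not factorize directly — Taylor-truncates $h$ into a polynomial so that only one-dimensional integrals $\int_\varepsilon^{1/\varepsilon} x^t \dd N_1^{\sss{(n)}}(x)$ remain, handled by an external lemma; the truncation is precisely where the $\bigOp{\varepsilon}$ in the statement comes from. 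You instead treat $U_n=\sideset{}{'}\sum_{i,j,k} f_{n,\varepsilon}(D_i,D_j,D_k)$ as a bounded $U$-statistic, compute $\binom{n}{3}\Exp{f_{n,\varepsilon}(D_1,D_2,D_3)}$ exactly, and concentrate by Hoeffding plus Chebyshev. This is valid and in fact yields a slightly stronger conclusion (your limit is exact, so no $\bigOp{\varepsilon}$ is needed), at the cost of the variance bookkeeping. On that bookkeeping: with $p:=\Prob{D_1\in B_n(\varepsilon)}=\Theta(\mathcal{L}(\sqrt{\mu n})n^{-\gamma/2})$ one has $\zeta_c=O(p^{6-c})$ for $c=1,2,3$, so each Hoeffding term satisfies $n^{6-c}\zeta_c=O((np)^{6-c})$ while $(\Exp{U_n})^2=\Theta((np)^6)$, and since $np=\mathcal{L}(\sqrt{\mu n})n^{1-\gamma/2}\to\infty$ for $\gamma<2$ all three terms vanish relative to the squared mean — your stated dominance of the $c=1$ term is true but you should check $c=2,3$ as well, since the kernel is $n$-dependent and the textbook dominance argument does not apply verbatim.

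One genuine slip needs repair: the ``local asymptotics'' $\Prob{\mathscr{D}=t}\sim\gamma\mathcal{L}(t)t^{-\gamma-1}$ do \emph{not} follow from the tail condition \eqref{eq:distribution_degrees} (a distribution supported on even integers has the same tail but vanishing odd point masses), and Assumption~\ref{ass:F} supplies only an upper bound on the pmf — note also that neither the lemma's statement nor the paper's proof of it invokes Assumption~\ref{ass:F} at all. The fix is to work at interval level rather than pointwise: since $f_{n,\varepsilon}$ is supported on $B_n(\varepsilon)^3$, after rescaling by $\sqrt{\mu n}$ the variables live in the compact box $[\varepsilon,1/\varepsilon]^3$ on which $h$ is bounded and continuous, so the first-moment limit follows from the weak convergence of the rescaled restricted law, i.e. from $\Prob{D_1\in[a,b]\sqrt{\mu n}}=(1+o(1))\gamma\mathcal{L}(\sqrt{\mu n})(\mu n)^{-\gamma/2}\int_a^b x^{-\gamma-1}\dd x$, which is exactly Lemma~\ref{lem:plvarying} and uses only the tail. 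For the same reason your anticipated obstacle — domination near the singularity of $(t_1t_2t_3)^{-\gamma-1}$ at the lower boundary — does not arise in this lemma: the indicator in $f_{n,\varepsilon}$ keeps the rescaled variables bounded away from zero; that concern (and the use of Assumption~\ref{ass:F}) belongs to Lemma~\ref{lem:sqrt}, where degrees below $\varepsilon\sqrt{\mu n}$ do contribute. With these corrections your argument is a correct, self-contained alternative to the paper's empirical-measure-plus-factorization proof.
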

\begin{proof}
Combining Lemmas~\ref{lem:extriang} and \ref{lem:g_n_epsilon_to_f_n_epsilon} yields that conditionally on the sampled degrees $(D_i)_{i\in[n]}$,
\begin{equation}\label{eq:triangexp}
\Expn{\triangle_n(B_n(\varepsilon))}=(1+\op(1)) \sum_{1\leq i< j< k\leq n}f_{n,\varepsilon}(D_i,D_j,D_k) + o_\pr\left(\mathcal{L}(\sqrt{\mu n})^3n^{\frac{3}{2}(2-\gamma)}\right)
\end{equation}
where $f_{n,\varepsilon}(D_i,D_j,D_k)$ is as in~\eqref{eq:def_f_n_epsilon}.
To investigate the convergence of~\eqref{eq:triangl} when taking the random degrees into account, define for $b>a\geq0$ the random measure 
\[
N_1^{\sss{(n)}}([a,b])=\frac{(\mu n)^{\frac{\gamma}{2}}}{\mathcal{L}(\sqrt{\mu n})}\frac{1}{n}\sum_{i=1}^n\ind{D_i\in[a\sqrt{\mu n},b\sqrt{\mu n}]},
\]
so that $N_1^{\sss{(n)}}$ counts the number of vertices with degrees in the interval $[a\sqrt{\mu n},b\sqrt{\mu n}]$.
Since every $D_i$ is drawn i.i.d.\ from~\eqref{eq:distribution_degrees}, the number of vertices with degrees between $a\sqrt{\mu n}$ and $b \sqrt{\mu n}$ is binomially distributed and therefore this number concentrates around its mean value, which is large. Combining this with Lemma~\ref{lem:plvarying} yields
\begin{align*}
N_1^{\sss{(n)}}\left([a,b]\right) &=\frac{(\mu n)^{\frac{\gamma}{2}}}{\mathcal{L}(\sqrt{\mu n})}\frac{1}{n}\sum_{i=1}^n\ind{D_i\in[a\sqrt{\mu n},b\sqrt{\mu n}]}\\
&=\frac{(\mu n)^{\frac{\gamma}{2}}}{\mathcal{L}(\sqrt{\mu n})}\Prob{D_i\in[a\sqrt{\mu n},b\sqrt{\mu n}]} (1+\op(1))\\
& = (1+\op(1))\gamma  \int_{a}^{b}t^{-\gamma-1}\dd t= (1+\op(1))(a^{-\gamma}-b^{-\gamma}).
\end{align*}
Therefore,
\begin{equation}\label{eq:lambda}
\begin{aligned}[b]
N_1^{\sss{(n)}}\left([a,b]\right)& \plim \int_{a}^{b}t^{-\gamma-1}\dd t = (a^{-\gamma}-b^{-\gamma}) =:\lambda([a,b]).
\end{aligned}
\end{equation}
Let $N^{\sss{(n)}}$ be the product measure $N_1^{\sss{(n)}}\times N_1^{\sss{(n)}}\times N_1^{\sss{(n)}}$ and $F=[\varepsilon,1/\varepsilon]^3$. Thus, $N^{\sss{(n)}}$ counts the number of triples with all three degrees proportional to $\sqrt{\mu n}$.

By~\eqref{eq:def_f_n_epsilon},
	\begin{align*}
	& \sum_{1\leq i< j< k\leq n}f_{n,\varepsilon}(D_i,D_j,D_k) \\
	& =  \sum_{1\leq i< j< k\leq n}(1 - \me^{-\frac{D_iD_j}{\mu n}}) (1 - \me^{-\frac{D_iD_k}{\mu n}}) (1 - \me^{-\frac{D_jD_k}{\mu n}})
	\ind{D_i,D_j,D_k \in B_n(\varepsilon)}\\
	&=\frac{1}{6}\mathcal{L}(\sqrt{\mu n})^3n^{\frac{3}{2}(2-\gamma)} \mu^{-\frac{3}{2}\gamma} \int_F (1-\mathrm{e}^{-t_1t_2})(1-\mathrm{e}^{-t_1t_3})(1-\mathrm{e}^{-t_2t_3})\dd N^{\sss{(n)}}(t_1,t_2,t_3).
	\end{align*}

The function $h(x,y,z)$ is bounded and continuous on $F=[\varepsilon, 1/\varepsilon]^3$. Therefore, writing the Taylor expansion of $\me^{-x}$ yields
\[
(1-\me^{-xy})=\sum_{i=1}^k\frac{(xy)^i}{i!}(-1)^i+O\left(\frac{\varepsilon^{-2k}}{(k+1)!}\right),
\]
on $x,y\in [\varepsilon,1/\varepsilon]$, where the error term goes to zero as $k\to\infty$. Therefore for any $\delta>0$, we can choose $k_1,k_2,k_3$ such that
\begin{align*}
h(x,y,z)& =\sum_{l_1=1}^{k_1}\sum_{l_2=1}^{k_2}\sum_{l_3=1}^{k_3}\left(\frac{(xy)^{l_1}}{l_1!}(-1)^{l_1}\frac{(yz)^{l_2}}{l_2!}(-1)^{l_2}\frac{(xz)^{l_3}}{l_3!}(-1)^{l_3}\right)+O\left(\delta\right)\\
&=\sum_{l_1=1}^{k_1}\sum_{l_2=1}^{k_2}\sum_{l_3=1}^{k_3}\left(\frac{x^{l_1+l_3}y^{l_1+l_2}z^{l_2+l_3}}{l_1!l_2!l_3!}(-1)^{l_1+l_2+l_3}\right)+O\left(\delta\right).
\end{align*}
Choosing $\delta=\varepsilon^{3\gamma+1}$ gives
\begin{align*}
& \frac{\sum_{1\leq i< j< k\leq n}f_{n,\varepsilon}(D_i,D_j,D_k)}{\mathcal{L}(\sqrt{\mu n})^3n^{\frac{3}{2}(2-\gamma)} \mu^{-\frac{3}{2}\gamma}}
=\frac{1}{6} \int_F h(t_1,t_2,t_3)\dd N^{\sss{(n)}}(t)\\
&=\frac 16 \int_{F}\sum_{l_1=1}^{k_1}\sum_{l_2=1}^{k_2}\sum_{l_3=1}^{k_3}\left(\frac{x^{l_1+l_3}y^{l_1+l_2}z^{l_2+l_3}}{l_1!l_2!l_3!}(-1)^{l_1+l_2+l_3}\right)+O\left(\varepsilon^{3\gamma+1}\right)\dd N^{\sss{(n)}}(t)\\
&=\frac 16 \int_{F}\sum_{l_1=1}^{k_1}\sum_{l_2=1}^{k_2}\sum_{l_3=1}^{k_3}\frac{x^{l_1+l_3}y^{l_1+l_2}z^{l_2+l_3}}{l_1!l_2!l_3!}(-1)^{l_1+l_2+l_3}\dd N^{\sss{(n)}}(t)+\bigOp{\varepsilon}\\
&=\frac 16\sum_{l_1=1}^{k_1}\sum_{l_2=1}^{k_2}\sum_{l_3=1}^{k_3}\frac{(-1)^{l_1+l_2+l_3}}{l_1!l_2!l_3!}\int_{\varepsilon}^{1/\varepsilon}x^{l_1+l_3}\dd N_1^{\sss{(n)}}(x)\int_{\varepsilon}^{1/\varepsilon}y^{l_1+l_2}\dd N_1^{\sss{(n)}}(y)\\
&\quad \cdot \int_{\varepsilon}^{1/\varepsilon}{z^{l_2+l_3}}\dd N_1^{\sss{(n)}}(z)+\bigOp{\varepsilon}.
\end{align*}
Here we used that because $N_1^{\sss{(n)}}[a,b]\plim \lambda[a,b]$,
	\begin{align*}
	\int_{F}\sum_{l_1=1}^{k_1}\sum_{l_2=1}^{k_2}\sum_{l_3=1}^{k_3}\varepsilon^4\dd N^{\sss{(n)}}(t)& = k_1k_2k_3 \varepsilon^{3\gamma+1} (N_1^{\sss{(n)}}([\varepsilon,1/\varepsilon]))^3\\& = \varepsilon^{3\gamma+1}\bigOp{\lambda([\varepsilon,1/\varepsilon])^3} = \varepsilon^{3\gamma+1}\bigOp{(\varepsilon^{-\gamma}-\varepsilon^{\gamma})^3} =\bigOp{\varepsilon}.
	\end{align*}
Since $x^t$ is bounded and continuous on $[\varepsilon,1/\varepsilon]$, we may use~\cite[Lemma~5]{hofstad2017c} to conclude that
\[
\int_{\varepsilon}^{1/\varepsilon}x^{t}\dd N_1^{\sss{(n)}}(x)\plim\int_{\varepsilon}^{1/\varepsilon}x^{t}\dd \lambda(x)
:= \varphi_\varepsilon(t).
\]
Thus,
\begin{align*}
& \frac{\sum_{1\leq i< j< k\leq n}f_{n,\varepsilon}(D_i,D_j,D_k)}{\mathcal{L}(\sqrt{\mu n})^3n^{\frac{3}{2}(2-\gamma)} \mu^{-\frac{3}{2}\gamma}}\\
& =(1+\op(1)) \frac 16\sum_{l_1=1}^{k_1}\sum_{l_2=1}^{k_2}\sum_{l_3=1}^{k_3}\frac{(-1)^{l_1+l_2+l_3}}{l_1!l_2!l_3!} \varphi_\varepsilon(l_1 + l_3) \varphi_\varepsilon(l_1 + l_2) \varphi_\varepsilon(l_2 + l_3)+\bigOp{\varepsilon}\\
&=(1+\op(1))\frac 16\int_F\sum_{l_1=1}^{k_1}\sum_{l_2=1}^{k_2}\sum_{l_3=1}^{k_3}\frac{x^{l_1+l_3}y^{l_1+l_2}z^{l_2+l_3}(-1)^{l_1+l_2+l_3}}{l_1!l_2!l_3!}\dd \lambda(x)\dd \lambda(y)\dd \lambda(z)+\bigOp{\varepsilon}\\
&=(1+\op(1))\frac 16 \int_F h(x,y,z)\dd \lambda(x)\dd \lambda(y)\dd \lambda(z)+\bigOp{\varepsilon},
\end{align*}
which concludes the proof together with~\eqref{eq:triangexp}.
\end{proof}

The following lemma bounds the variance of the number of triangles between vertices of sampled degrees proportional to $\sqrt{n}$. 
\begin{lemma}\label{lem:exvarECM}
Let ${\bf D}_n$ be sampled from $\mathscr{D}$ with $1< \gamma < 2$, and
$\widehat{G}_n = \emph{\texttt{ECM}}({\bf D}_n)$. Let $\triangle_n(B_n(\varepsilon))$ denote the number of triangles in $\widehat{G}_n$ with sampled degrees in $B_n(\varepsilon)$. Then, as $n\to\infty$,
\[
	\frac{\Varn{\triangle_n(B_n(\varepsilon))}}{\Expn{\triangle_n(B_n(\varepsilon))}^2}\plim 0.
\]
\end{lemma}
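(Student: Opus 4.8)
The plan is to run a second-moment argument conditionally on the sampled degrees. Write $\triangle_n(B_n(\varepsilon))=\sum_m\mathbbm{1}_m$, where $m=\{i,j,k\}$ ranges over triples with $D_i,D_j,D_k\in B_n(\varepsilon)$ and $\mathbbm{1}_m=\widehat{X}_{ij}\widehat{X}_{jk}\widehat{X}_{ik}$, so that
\[
	\Varn{\triangle_n(B_n(\varepsilon))}\le\sum_{m_1,m_2}\abs{\textup{Cov}_n(\mathbbm{1}_{m_1},\mathbbm{1}_{m_2})}.
\]
I would classify pairs $(m_1,m_2)$ by the overlap $s=\abs{m_1\cap m_2}\in\{0,1,2,3\}$. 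Writing $N=N_\varepsilon:=\sum_{i=1}^n\ind{D_i\in B_n(\varepsilon)}$, the number of pairs with a given overlap $s$ is of order $N^{6-s}$. Throughout I work on the event $\mathcal{A}_n$ of \eqref{eq:event_A_n}, which has probability tending to one and on which $L_n=\mu n(1+o(1))$. Two inputs will be used repeatedly: $N\asymp\mathcal{L}(\sqrt{n})n^{1-\gamma/2}\to\infty$ in probability (concentration of the binomial count $N$ together with Lemma~\ref{lem:plvarying}), and $\Expn{\triangle_n(B_n(\varepsilon))}\asymp N^3$ for fixed $\varepsilon$ (by Lemma~\ref{lem:extriang}, since each factor $1-\me^{-D_uD_v/L_n}$ lies between positive $\varepsilon$-dependent constants on $B_n(\varepsilon)$).

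For the overlapping cases $s\in\{1,2,3\}$ no fine estimate is needed: since $\mathbbm{1}_m\in\{0,1\}$ we have $\abs{\textup{Cov}_n(\mathbbm{1}_{m_1},\mathbbm{1}_{m_2})}\le 1$, so the total contribution is $O(N^{5})+O(N^{4})+O(N^{3})=O(N^{5})$, dominated by the term $s=1$. As $\Expn{\triangle_n(B_n(\varepsilon))}^2\asymp N^6$, this gives
\[
	\sum_{\abs{m_1\cap m_2}\ge1}\abs{\textup{Cov}_n(\mathbbm{1}_{m_1},\mathbbm{1}_{m_2})}
	=O(N^5)=\op\big(\Expn{\triangle_n(B_n(\varepsilon))}^2\big),
\]
because $N\to\infty$ in probability.

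The main work is the disjoint case $s=0$, where I must show that a pair of vertex-disjoint triangles is asymptotically uncorrelated. Conditioning on $\mathbbm{1}_{m_1}=1$ ties up six half-edges but, since $m_2$ is vertex-disjoint from $m_1$, leaves the degrees of $m_2$ intact and turns the residual matching into a configuration model on $L_n-6$ half-edges. Using the conditional edge-probability estimates of \cite[Lemma 3.1]{hofstad2017d} (the same tool as in Lemma~\ref{lem:extriang}), uniformly over degrees in $B_n(\varepsilon)$ one obtains
\[
	\Expn{\mathbbm{1}_{m_2}\mid\mathbbm{1}_{m_1}=1}=\Expn{\mathbbm{1}_{m_2}}\,(1+o(1)),
\]
the relative error being of order $D_iD_j/L_n^2=O(1/(\varepsilon^2 n))$. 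Hence $\textup{Cov}_n(\mathbbm{1}_{m_1},\mathbbm{1}_{m_2})=o(1)\,\Expn{\mathbbm{1}_{m_1}}\Expn{\mathbbm{1}_{m_2}}$ uniformly over disjoint pairs, and bounding the disjoint sum by the full product,
\[
	\sum_{m_1\cap m_2=\emptyset}\abs{\textup{Cov}_n(\mathbbm{1}_{m_1},\mathbbm{1}_{m_2})}
	\le o(1)\Big(\sum_m\Expn{\mathbbm{1}_m}\Big)^2=o(1)\,\Expn{\triangle_n(B_n(\varepsilon))}^2 .
\]

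Combining the two regimes yields
\[
	\frac{\Varn{\triangle_n(B_n(\varepsilon))}}{\Expn{\triangle_n(B_n(\varepsilon))}^2}
	=o(1)+O\!\left(\tfrac{1}{N}\right)\plim 0,
\]
which is the claim. I expect the delicate step to be the uniform $(1+o(1))$ factorization of the joint triangle probability for disjoint triangles: one must verify that the conditional edge-probability estimates apply uniformly over all degrees in $B_n(\varepsilon)=[\varepsilon\sqrt{\mu n},\sqrt{\mu n}/\varepsilon]$ and that conditioning on three present edges perturbs the residual configuration model only through the negligible reduction $L_n\mapsto L_n-6$. The overlapping-triangle bookkeeping, by contrast, is routine counting once the crude bound $\abs{\textup{Cov}_n}\le1$ is invoked.
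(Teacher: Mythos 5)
Your proposal is correct and follows essentially the same route as the paper's proof: a conditional second-moment bound split by the vertex overlap of the two triangles, with the crude bound $\abs{\textup{Cov}_n(\mathbbm{1}_{m_1},\mathbbm{1}_{m_2})}\le 1$ for overlapping pairs ($3$, $4$ or $5$ distinct vertices) and, for vertex-disjoint pairs, the uniform $(1+o(1))$-factorization of the joint triangle probability via the conditional edge-probability estimates of \cite[Lemma 3.1]{hofstad2017d}, exactly as in the computation leading to \eqref{eq:pind} in Lemma~\ref{lem:extriang}. The only difference is cosmetic bookkeeping in the overlapping cases: the paper bounds $V^{(m)}\le M_n(\varepsilon)^m$ and compares against the deterministic scale $n^{6-3\gamma}$ on a high-probability event with a tuned $\delta<\tfrac15(1-\tfrac12\gamma)$, whereas you compare directly to $\Expn{\triangle_n(B_n(\varepsilon))}^2\asymp N^6$ using the uniform lower bound $f_1(\varepsilon)$ on triangle probabilities for degrees in $B_n(\varepsilon)$, obtaining the self-normalized bound $O(1/N)$ with $N\plim\infty$ and thereby avoiding the exponent arithmetic.
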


\begin{proof}
Choose $0<\delta<\tfrac{1}{5}(1-\tfrac{1}{2}\gamma)$. Denote 
\[
	\mathcal{B}_n=\Big\{\sum_{i=1}^{n}\ind{D_i>\varepsilon\sqrt{\mu n}}< n^{\frac{1}{2}(2-\gamma)+\delta}\Big\},
\]
let $\mathcal{A}_n$ be as defined in~\eqref{eq:event_A_n}, and set $\Lambda_n=\mathcal{A}_n\cap\mathcal{B}_n$.
Because the sampled degrees are i.i.d.\ samples from~\eqref{eq:distribution_degrees}, $\Prob{\Lambda_n}\to 1$. Therefore, we work on the event $\Lambda_n$ in the rest of the proof.
By Lemma~\ref{lem:extriangconv}, $\Expn{\triangle_n(B_n(\varepsilon))}=\Theta_{\sss{\mathbb{P}}}(n^{3-\frac{3}{2}\gamma}\mathcal{L}(\sqrt{n}))^3$. Thus, we need to prove that
\[
	\Varn{\triangle_n(B_n(\varepsilon))}=o_{\pr}\left(n^{6-3\gamma}\mathcal{L}(\sqrt{n})^6\right).
\]
We can write the variance $\Varn{\triangle_n(B_n(\varepsilon))}$ as
\begin{align}
	&\hspace{-10pt}\sum_{(i,j,k),(s,t,u)} \hspace{-4pt}\left(\Probn{\triangle_{i,j,k},\triangle_{s,t,u}}-\Probn{\triangle_{i,j,k}}\Probn{\triangle_{s,t,u}}\right)\ind{D_i,D_j,D_j,D_s,D_t,D_u\in B_n(\varepsilon)}, \label{eq:var6}
\end{align}
where $\triangle_{i,j,k}$ denotes the event that there is a triangle between vertices $i,j$ and $k$. 
This splits into several cases, depending on $\abs{\{i,j,k,s,t,u\}}$. Let the part of the variance where $\abs{\{i,j,k,s,t,u\}}=m$ be denoted by $V^{\sss{(m)}}$. so that
\[
\Varn{\triangle_n(B_n(\varepsilon))}=V^{\sss{(6)}}+V^{\sss{(5)}}+V^{\sss{(4)}}+V^{\sss{(3)}}
\]
Let $M_n(\varepsilon)=\sum_i\ind{D_i\in B_n(\varepsilon)}$.  Then
\[
V^{\sss{(m)}}\leq \sum_{i,j,k,s,t: |\{i,j,k,s,t\}|=m}\ind{D_i,D_j,D_j,D_s,D_t\in B_n(\varepsilon)}= M_n(\varepsilon)^m, \quad m=3,4,5.
\]
Under $\mathcal{B}_n$, 
\[
M_n(\varepsilon)\leq \sum_{i=1}^{n}\ind{D_i>\varepsilon\sqrt{\mu n}}<n^{\frac{1}{2}(2-\gamma)+\delta}.
\]
Thus, by the choice of $\delta$,
\[
	V^{\sss{(5)}} \leq M_n(\varepsilon)^5=\bigO{n^{5-5\gamma/2+5\delta}}=o(n^{6-3\gamma}),
\]
as required. Similar bounds show that $V^{\sss{(4)}}$ and $V^{\sss{(3)}}$ are $o(n^{6-3\gamma})$. Thus, the contributions  $V^{\sss{(m)}}$, $m=3,4,5$, to the variance are sufficiently small.
	
Now we investigate the case where 6 different indices are involved and show that $V^{\sss{(6)}}=\op(n^{6-3\gamma}\mathcal{L}(\sqrt{n})^6)$.
Equation~\eqref{eq:exgn} computes the second term inside the brackets in~\eqref{eq:var6}. To compute the first term inside the brackets, we make a very similar computation that leads to~\eqref{eq:exgn}. 
A similar computation as in~\eqref{eq:pind} yields that on the event $\mathcal{A}_n$
\[
\Probn{\triangle_{i,j,k}, \triangle_{s,t,u}} = (1+\op(1))g_{n,\varepsilon}(D_i,D_j,D_k)g_{n,\varepsilon}(D_s,D_t,D_u).
\]
Hence,
\[
\Probn{\triangle_{i,j,k},\triangle_{s,t,u}} -\Probn{\triangle_{i,j,k}}\Probn{\triangle_{s,t,u}}
= \op(1)g_{n,\varepsilon}(D_i,D_j,D_k)g_{n,\varepsilon}(D_s,D_t,D_u).
\]
When $D_i,D_j,D_k\in[\varepsilon,1/\varepsilon]\sqrt{n}$, $g_{n,\varepsilon}(D_i,D_j,D_k)\in[f_1(\varepsilon),f_2(\varepsilon)]$, uniformly in $i,j,k$.
Therefore, by Lemma~\ref{lem:extriang},
\begin{align*}
	V^{\sss{(6)}} &= \hspace{-3pt} \sum_{(i,j,k),(s,t,u)} \hspace{-3pt} o(1)g_{n,\varepsilon}(D_i,D_j,D_k)g_{n,\varepsilon}(D_s,D_t,D_u) \\
	&=\op \left(\Expn{\triangle_n(B_n(\varepsilon))}^2\right) = \op \left(n^{6-3\gamma}\mathcal{L}(\sqrt{n})^6\right),
\end{align*}
and therefore also the contribution to the variance where $\abs{\{i,j,k,s,t,u\}}=6$ is small enough.
\end{proof}

\begin{proof}[Proof of Theorem~\ref{thm:CECM}]
	First, we look at the denominator of the clustering coefficient in~\eqref{eq:clust}. By~\eqref{eq:Der_D_scaled} and the Stable Law Central Limit Theorem~\cite[Theorem 4.5.2]{whitt2006}, there exists a slowly-varying function $\mathcal{L}_0$ such that
\begin{equation}\label{eq:d2ecm}
\frac{\sum_{i=1}^n\widehat{D}_i(\widehat{D}_i-1)}{\mathcal{L}_0(n)n^{2/\gamma}}=\frac{\sum_{i=1}^n{D}_i^2}{\mathcal{L}_0(n)n^{2/\gamma}}(1+\op(1)) \dlim \mathcal{S}_{\gamma/2},
\end{equation}
where $\mathcal{S}_{\gamma/2}$ is a stable distribution.

Now we consider the numerator of the clustering coefficient. We prove the convergence of the number of triangles in several steps. First, we show that the major contribution to the number of triangles comes from the triangles between vertices with degrees proportional to $\sqrt{n}$. Fix $\varepsilon>0$. 
We use~\eqref{eq:splitX}, where we want to show that the contribution of $\triangle_n(\bar{B}_n(\varepsilon))$ is negligible. 
Applying Lemma~\ref{lem:sqrt} with the Markov inequality yields, for every $\delta > 0$,
\begin{equation*}
\limsup_{n\to\infty} \Prob{\triangle_n(\bar{B}_n(\varepsilon))> \delta{ \mathcal{L}(\sqrt{\mu n})^3}n^{\frac32 (2-\gamma)}}= O\left(\frac{\mathcal{E}_1(\varepsilon)}{\delta}\right).
\end{equation*}
Therefore,
\begin{equation}\label{eq:triangnl}
\begin{aligned}[b]
\triangle_n(\bar{B}_n(\varepsilon))=\bigOp{\mathcal{E}_1(\varepsilon) \mathcal{L}(\sqrt{\mu n})^3n^{\frac32 (2-\gamma)}}.
\end{aligned}
\end{equation}
Because $\mathcal{E}_1(\varepsilon)$ tends to zero as $\varepsilon\to 0$, we now focus on $\triangle_n(B_n(\varepsilon))$.
The number of triangles consists of two sources of randomness: the random pairing of the edges, and the random degrees. First we show that $\triangle_n(B_n(\varepsilon))$ concentrates around its mean when conditioned on the degrees. 
 By Lemma~\ref{lem:exvarECM} and Chebyshev's inequality,
 \[
 \frac{\triangle_n(B_n(\varepsilon))}{\Expn{\triangle_n(B_n(\varepsilon))}}\plim 1,
 \]
 conditionally on the degree sequence $(D_i)_{i\in[n]}$.
 Combining this with Lemma~\ref{lem:extriangconv} yields that conditionally on $(D_i)_{i\in[n]}$, 
\begin{equation}\label{eq:triangl}
\begin{aligned}
&\hspace{-15pt}\triangle_n(B_n(\varepsilon)) =\mathcal{L}(\sqrt{\mu n})^3n^{\frac{3}{2}(2-\gamma)} \mu^{-\frac{3}{2}\gamma} \times \\ &\left((1+\op(1))\frac{1}{6}\int_{\varepsilon}^{1/\varepsilon} \int_{\varepsilon}^{1/\varepsilon} \int_{\varepsilon}^{1/\varepsilon} \!\!\! \frac{\gamma^3}{(t_1t_2t_3)^{\gamma+1}}h(t_1,t_2,t_3)\dd t_1\dd t_2\dd t_3+\bigOp{\varepsilon}\right),
\end{aligned}
\end{equation}
where $h(x,y,z):=(1-\mathrm{e}^{-xy})(1-\mathrm{e}^{-xz})(1-\mathrm{e}^{-yz})$ and $\varepsilon$ is the same as in~\eqref{eq:triangnl}.

Combining~\eqref{eq:triangnl} and~\eqref{eq:triangl} gives
\begin{align*}
&\hspace{-15pt}\frac{\triangle_n}{\mathcal{L}(\sqrt{\mu n})^3n^{\frac 32(2-\gamma)}} =\frac{\triangle_n(B_n(\varepsilon))+\triangle_n(\bar{B}_n(\varepsilon))}{\mathcal{L}(\sqrt{\mu n})^3n^{\frac 32(2-\gamma)}}\\
& =(1+\op(1))\frac{\sum_{1\leq i< j< k\leq n}f_{n,\varepsilon}(D_i,D_j,D_k) }{\mathcal{L}(\sqrt{\mu n})^3n^{\frac 32(2-\gamma)}}+\op(1)+\bigOp{\mathcal{E}_1(\varepsilon)}\\
&=(1+\op(1))  \mu^{-\frac 32 \gamma}\frac{1}{6}\int_{\varepsilon}^{1/\varepsilon} \hspace{-5pt} \int_{\varepsilon}^{1/\varepsilon} \hspace{-5pt}\int_{\varepsilon}^{1/\varepsilon}  \frac{\gamma^3}{(t_1t_2t_3)^{\gamma+1}}h(t_1,t_2,t_3)\dd t_1\dd t_2\dd t_3 \\ &\hspace{10pt}+O(\mathcal{E}_1(\varepsilon))+\bigOp{\varepsilon}.
\end{align*}
Taking the limit of $n\to\infty$ and then $\varepsilon\to0$ combined with~\eqref{eq:d2ecm} and the definition of the clustering coefficient in~\eqref{eq:clust} then results in
\begin{align*}
\frac{\hat{C}_n\mathcal{L}_0(n)}{\mathcal{L}(\sqrt{\mu n})^3n^{(-3\gamma^2+6\gamma-4)/(2\gamma)}}&=6\frac{\mathcal{L}_0(n)n^{2/\gamma}}{\sum_{i\in[n]}\hat{D}_i(\hat{D}_i-1)}\frac{\triangle_n}{\mathcal{L}(\sqrt{\mu n})^3n^{\frac{3}{2}(2-\gamma)}}\\
& \dlim \mu^{-\frac 32 \gamma}\frac{ \int_{0}^{\infty} \int_{0}^{\infty} \int_{0}^{\infty}  \frac{\gamma^3}{(t_1t_2t_3)^{\gamma+1}}h(t_1,t_2,t_3)\dd t_1\dd t_2\dd t_3}{\mathcal{S}_{\gamma/2}}
\end{align*}
which proves Theorem~\ref{thm:CECM}.
\end{proof}

\section{Proofs of Theorem~\ref{thm:clt_pearson_hvm} and~\ref{thm:CHVM}}\label{sec:proof_hvm}
We now show how the proofs of Theorems~\ref{thm:clt_pearson_ecm} and~\ref{thm:CECM} can be adapted to prove similar results for the class of rank-1 inhomogeneous random graphs. We remind the reader that in this setting the degrees $D_i$ are no longer an i.i.d. sample from \eqref{eq:distribution_degrees}.

Let $\kappa\le (\gamma-1)/(1+\gamma)$, $0<\delta<1-1/\gamma$ and define the events
\begin{align*}
\mathcal{A}_n = \left\{\big|{\sum_{i \in[n]}w_i-\mu n}\big|\leq n^{1-\kappa}\right\},\quad 
\mathcal{B}_n = \left\{\big|\sum_{i = 1}^nw_i\ind{w_i<n^{\delta}}-\mu n\big|\leq \frac{n}{\log(n)}\Big|\right\}
\end{align*}
Let $\Lambda_n=\mathcal{A}_n\cap\mathcal{B}_n$. Because $\Prob{\Lambda_n}\to 0$, we condition on the event $\Lambda_n$. For the erased configuration model, we used that the degrees and the erased degrees are close to prove Theorems~\ref{thm:clt_pearson_ecm} and~\ref{thm:CECM}. Similarly, for the inhomogeneous random graphs we now show that the weights and the degrees are close. By Condition~\ref{cond:conhvm}(i)
\begin{equation}\label{eq:expwub}
\begin{aligned}
\Expn{D_i}=\sum_{j\neq i}\frac{w_iw_j}{\mu n}h\left(\frac{w_iw_j}{\mu n}\right)\leq \sum_{j \in[n]}\frac{w_iw_j}{\mu n}=w_i(1+o(1)),
\end{aligned}
\end{equation}
where $\e_n$ now denotes expectation conditioned on the weight sequence. 
Furthermore, for $w_i=O(n^{1/\gamma})$, on the event $\mathcal{B}_n$, by Condition~\ref{cond:conhvm}(ii) and (iii)
\begin{equation}\label{eq:expwlb}
\begin{aligned}[b]
\Expn{D_i}& \geq \sum_{j:w_j<n^{\frac{\gamma-1}{2\gamma}}}\frac{w_jw_i}{\mu n}h\left(\frac{w_iw_j}{\mu n}\right) = \sum_{j:w_j<n^{\frac{\gamma-1}{2\gamma}}}\frac{w_jw_i}{\mu n}\Big(1+\bigO{\frac{w_iw_j}{\mu n}}\Big)\\
& = (1+o(1)) \sum_{j:w_j<n^{\frac{\gamma-1}{2\gamma}}}\frac{w_jw_i}{\mu n}=w_i(1+o(1)),
\end{aligned}
\end{equation}
where the first equality follows from a first order Taylor expansion of $h(x)$. 
Combining~\eqref{eq:expwub} and~\eqref{eq:expwlb} yields $\Expn{D_i}=w_i(1+o(1))$. 

Let $X_{ij}$ again denote the indicator that edge $\{i,j\}$ is present. Note that $\Varn{X_{ij}}=p(w_i,w_j)(1-p(w_i,w_j))\leq p(w_i,w_j)$. Because conditioned on the weights, the degree of a vertex $D_i=\sum_{i \neq j}X_{ij}$ is the sum of independent indicators with success probability $p(w_i,w_j)$, $\Varn{D_i}\leq\sum_{i \neq j}p(w_i,w_j)\leq 2w_i$ for $n$ large enough. Then, Bernsteins inequality yields that for $t>0$
\[
\Probn{\abs{D_i-w_i}>t}\leq \exp\left(-\frac{t^2/2}{2w_i+t/3}\right).
\]
Thus, for $w_i>\log(n)$,
\[
D_i=w_i(1+\op(1)).
\]
Therefore,
\begin{align*}
\sum_{i \in [n]}|D_i^2-w_i^2|& =\sum_{i:w_i\leq \log(n)}|D_i^2-w_i^2|+\sum_{i:w_i>\log(n)}|D_i^2-w_i^2|\\
& = \bigOp{n\log^2(n)}+\op\Big(\sum_{i:w_i>\log(n)}w_i^2\Big)=\op\Big(\sum_{i \in[n]}w_i^2\Big),
\end{align*}
and a similar result holds for the third moment of the degrees. In particular, this implies that~\eqref{eq:Dhatsq} and~\eqref{eq:degmoments} also hold for the inhomogeneous random graph under Condition~\ref{cond:conhvm}.

\subsection{Pearson in the rank-1 inhomogeneous random graph}
The analysis of the term $r_n^-$ in~\eqref{eq:pearson_negative_part} is the same as in the erased configuration model, since it only depends on the degrees, and~\eqref{eq:Dhatsq} also holds for the inhomogeneous random graph. We therefore only need to show that Proposition~\ref{prop:pearson_ecm_positive_part} also holds for the rank-1 inhomogeneous random graph. 
This means that we need to show that~\eqref{eq:pearson_ecm_r+_joint_degrees} also holds for the rank-1 inhomogeneous random graph. For all models satisfying Condition~\ref{cond:conhvm}, $p(w_i,w_j)\leq (w_iw_j/(\mu n) \wedge 1)$.  
Because $\Expn{D_i}=w_i(1+o(1))$, $D_i=\bigOp{w_i}$, by Markov's inequality. Thus, 
\begin{equation*}
\sum_{1 \le i < j \le n}D_iD_jX_{ij}=\bigOp{\sum_{1 \le i < j \le n}w_iw_jX_{ij}}= \bigOp{\sum_{1 \le i < j \le n}w_iw_j \left(\frac{w_iw_j}{\mu n} \wedge 1\right)}
\end{equation*}
Because the weights are sampled from~\eqref{eq:distribution_degrees}, this is the exact same bound as in~\eqref{eq:pearson_clt_ecm_r+_term_1}, so that from there we can follow the same lines as the proof of Proposition~\ref{prop:pearson_ecm_positive_part}. Thus, Proposition~\ref{prop:pearson_ecm_positive_part} also holds for rank-1 inhomogeneous random graphs satisfying Condition~\ref{cond:conhvm}. Then we can follow the same lines as the proof of Theorem~\ref{thm:clt_pearson_ecm} to prove Theorem~\ref{thm:clt_pearson_hvm}.

\subsection{Clustering in the rank-1 inhomogeneous random graph}
For the clustering coefficient, note that conditioned on the weights, $\Probn{\triangle_{i,j,k}}=p(w_i,w_j)p(w_j,w_k)p(w_i,w_k)$. Furthermore, Lemma~\ref{lem:sqrt} only requires the bound $p(w_i,w_j)$\\
$\leq (w_iw_j/(\mu n) \wedge 1)$, which also holds for all rank-1 inhomogeneous random graphs satisfying Condition~\ref{cond:conhvm}, so that Lemma~\ref{lem:sqrt} also holds for these rank-1 inhomogeneous random graphs. Furthermore, conditioned on the weights, the probabilities of distinct edges being present are independent, so that
\[
\Expn{\triangle_n(B_n(\varepsilon))}=\sum_{1 \le i < j < k \le n}q\left(\frac{w_iw_j}{\mu n}\right)q\left(\frac{w_iw_k}{\mu n}\right)q\left(\frac{w_jw_k}{\mu n}\right)\ind{(w_i,w_j,w_k)\in B_n(\varepsilon)}
\]
similarly to Lemma~\ref{lem:extriang}, with $q$ as defined in Condition~\ref{cond:conhvm}. Furthermore, the bound on the variance of the number of triangles in the erased configuration model in Lemma~\ref{lem:exvarECM} for 3,4 or 5 contributing vertices only depends on the degrees, so that it also holds for the rank-1 inhomogeneous random graph satisfying Condition~\ref{cond:conhvm} since the weights are also sampled form~\eqref{eq:distribution_degrees}. The contribution of 6 different vertices to the variance is zero, because the presence of distinct edges is independent. Thus, Lemma~\ref{lem:exvarECM} also holds for the rank-1 inhomogeneous random graph. Thus, we can follow the lines of the proof of Theorem~\ref{thm:CECM} until equation~\eqref{eq:lambda}. From there, note that
\begin{align*}
& \frac{\sum_{1 \le i < j < k \le n}q\left(\frac{w_iw_j}{\mu n}\right)q\left(\frac{w_iw_k}{\mu n}\right)q\left(\frac{w_jw_k}{\mu n}\right)\ind{(w_i,w_j,w_k)\in B_n(\varepsilon)}}{\mathcal{L}(\sqrt{\mu n})^3n^{\frac{3}{2}(2-\gamma)}\mu^{-\tfrac{3}{2}\gamma}} \\
& \quad = \frac{1}{6}\int_Fq(t_1t_2)q(t_1t_3)q(t_2t_3)\dd N^{\sss{(n)}}_1(t_1)\dd N^{\sss{(n)}}_1(t_2)\dd N^{\sss{(n)}}_1(t_3).
\end{align*}
Then we use that the function $q(t_1t_2)q(t_1t_3)q(t_2t_3)$ is a bounded, continuous function by Condition~\ref{cond:conhvm}, so that by~\cite[Lemma 5]{hofstad2017c},
\begin{align*}
& \int_Fq(t_1t_2)q(t_1t_3)q(t_2t_3)\dd N^{\sss{(n)}}_1(t_1)\dd N^{\sss{(n)}}_1(t_2)\dd N^{\sss{(n)}}_1(t_3)\\
& \quad \plim \int_Fq(t_1t_2)q(t_1t_3)q(t_2t_3)\dd \lambda(t_1)\dd\lambda(t_2)\dd \lambda(t_3).
\end{align*}
Thus, we can follow the exact same lines of the proof of the clustering coefficient in the erased configuration model, replacing the term $(1-\me^{-t_1t_2})(1-\me^{-t_1t_3})(1-\me^{-t_2t_3})$ by $q(t_1t_2)q(t_1t_3)q(t_2t_3)$, which them proves Theorem~\ref{thm:CHVM}.

\section*{Acknowledgments}
The authors want to thank two anonymous referees for their constructive feedback on earlier versions of the paper. The work of RvdH and CS was supported by NWO TOP grant 613.001.451. The work of RvdH is further supported by the NWO Gravitation Networks grant 024.002.003 and the NWO VICI grant 639.033.806. PvdH and NL were supported by the EU-FET Open grant NADINE 288956. PvdH was further supported by ARO grant W911NF1610391.

\section{Appendix}\label{sec:appendix}

In this section we prove some technical results used in the preceding proofs. 

\subsection{Erased edges}\label{ssec:proof_erased_edges}

We start with the proof for the scaling of the number of erased edges.

\begin{proof}[Proof of Theorem \ref{thm:scaling_erased_edges}]
Let $K, \delta > 0$, $\kappa\le (\gamma-1)/(1+\gamma)$ and define the events
\begin{align*}
	\mathcal{A}_n = \left\{\left|L_n - \mu n\right| \le n^{1 - \kappa}\right\}, \quad
	\mathcal{B}_n = \left\{\max_{1 \le i \le n} D_i \le n^{\frac{1}{\gamma} + \frac{\delta}{2}}\right\},
	\quad \mathcal{C}_n = \left\{\sum_{i = 1}^n D_i^2 \le n^{\frac{2}{\gamma} + \frac{\delta}{2}}\right\}
\end{align*}
and set $\Lambda_n = \mathcal{A}_n \cap \mathcal{B}_n \cap \mathcal{C}_n$. Then by Lemma \ref{lem:Kolmogorov_SLLN} and Proposition 
\ref{prop:scaling_sums_powers_regularly_varying}, $\Prob{\Lambda_n} \to 1$, and hence we only need
to proof the result conditioned on the event $\Lambda_n$.

First recall that
\[
	Z_n = \sum_{i = 1}^n X_{ii} + \sum_{1 \le i < j \le n} \left(X_{ij} - \ind{X_{ij} > 0}\right).
\]
We first consider the conditional expectation of last term $\Expn{\ind{X_{ij} > 0}} = 1 - \Probn{X_{ij} = 0}$. It follows from~\cite[equation 4.9]{hofstad2005} that
\begin{align*}
	\Probn{X_{ij} = 0} 
	&\le \prod_{t=0}^{D_i - 1} \left(1 - \frac{D_j}{L_n - 2D_i - 1}\right) 
		+ \frac{D_i^2 D_j}{(L_n - 2D_i)^2}\\
	&\le \left(1 - \frac{D_j}{L_n - 1}\right)^{D_i} + \frac{D_i^2 D_j}{(L_n - 2D_i)^2}\\
	&\le e^{-\frac{D_i D_j}{L_n - 1}} + \frac{D_i^2 D_j}{(L_n - 2D_i)^2}.
\end{align*}
The additional term essentially comes from the fact that we need to consider the cases were a stub of node $i$ connects to another stub of node $i$.

Next, since $\Exp{X_{ii}} = D_i(D_i-1)/(L_n - 1)$ and $\Exp{X_{ij}} = D_iD_j/(L_n - 1)$ we have
\[
	\Expn{Z_n} \le \sum_{1 \le i < j \le n} \phi\left(
	\frac{D_i D_j}{L_n - 1}\right) + \sum_{i = 1}^n \frac{D_i^2 - D_i}{L_n - 1} 
	+ \sum_{1 \le i < j \le n} \frac{D_i^2 D_j}{(L_n - 2D_i)^2},
\]
where $\phi(x) = x - 1 + \me^{-x}$. Define 
\[
	M_n = \sum_{i = 1}^n \frac{D_i^2 - D_i}{L_n - 1} 
	+ \sum_{1 \le i < j \le n} \frac{D_i^2 D_j}{(L_n - 2D_i)^2}.
\]

Then, on the event $\Lambda_n$,
\begin{align*}
	M_n &\le \frac{n^{\frac{2}{\gamma} + \frac{\delta}{2}}}{\mu n - n^{1 - \kappa} - 1}
		+ \frac{(\mu n - n^{1 - \kappa})
		n^{\frac{2}{\gamma} + \frac{\delta}{2}}}{\left(\mu n - n^{1 - \kappa} -2n^{\frac{1}{\gamma} + \frac{\delta}{2}}\right)^2} \\
	&= \bigO{n^{\frac{2}{\gamma} - 1 + \frac{\delta}{2}}}.
\end{align*}
Note that $\Lambda_n$ is completely determined by the degree sequence. Hence, by Markov's inequality,
we get
\begin{align*}
	\Prob{Z_n > n^{2 - \gamma + \delta}, \Lambda_n} 
	&\le n^{\gamma - 2 - \delta} \Exp{Z_n \indE{\Lambda_n}} 
		= n^{\gamma - 2 - \delta} \Exp{\Expn{Z_n}\indE{\Lambda_n}}\\
	&\le  n^{\gamma - 2 - \delta} \Exp{\sum_{1 \le i < j \le n} \phi\left(
		\frac{D_i D_j}{L_n - 1}\right) \ind{\Lambda_n}} + n^{\gamma - 2 - \delta} \Exp{M_n \indE{\Lambda_n}} \\
	&\le n^{\gamma - 2 - \delta} \Exp{\sum_{1 \le i < j \le n} \phi\left(
		\frac{D_i D_j}{\mu n - 1 - n^{1 - \kappa}}\right)} + \bigO{n^{\gamma + \frac{2}{\gamma} - 3 - \frac{\delta}{2}}}\\
	&\le n^{\gamma - \delta} \Exp{\phi\left(\frac{D_1 D_2}{\mu n -1 - n^{1 - \kappa}}\right)}  
		+ \bigO{n^{\gamma + \frac{2}{\gamma} - 3 - \frac{\delta}{2}}},
\end{align*}
as $n \to \infty$. Since 
\[
	\gamma + \frac{2}{\gamma} - 3 < 0,
\]
for all $1 < \gamma < 2$ the last term goes to zero as $n \to \infty$. For the other term we note that
$D_1$ and $D_2$ are independent regularly-varying random variables with exponent $1 < \gamma < 2$ and therefore
so is $D_1 D_2$. It then follows from \cite{Bingham1974} that for any $\delta > 0$
\[
	\lim_{t \to \infty} \frac{\Exp{\phi\left(\frac{D_1 D_2}{t}\right)}}{t^{-\gamma + \delta}} = 0.
\]
Taking $t = \mu n -1 - n^{1 - \kappa}$ we obtain that
\[
	\lim_{n \to \infty} n^{\gamma} \Exp{\phi\left(\frac{D_1 D_2}{\mu n -1 - n^{1 - \kappa}}\right)} = 0,
\]
from which it follows that
\[
	\lim_{n \to \infty} \Prob{E_n > n^{2 - \gamma + \delta}, \Lambda_n} = 0.
\]
\end{proof}

We proceed with the proof of the corollary.

\begin{proof}[Proof of Corollary \ref{cor:scaling_degrees_erased_edges}]
For the first part we write
\begin{align*}
	\sum_{i = 1}^n D_i^p Y_i \le \max_{1 \le j \le n} D_j^p \sum_{i = 1}^n Y_i \le 2 Z_n \max_{1 \le j \le n} D_j^p,
\end{align*}
and hence, using Theorem \ref{thm:scaling_erased_edges} and
Proposition \ref{prop:scaling_sums_powers_regularly_varying},
\[
	\frac{\sum_{i = 1}^n D_i^p Y_i}{n^{\frac{p}{\gamma} + 2 - \gamma + \delta}}
	\le \left(\frac{2 Z_n}{n^{2 - \gamma + \frac{\delta}{2}}}\right) \left(\frac{\max_{1 \le j \le n} D_j^p}{n^{\frac{p}{\gamma} + \frac{\delta}{2}}}\right) \plim 0.
\]
For the second part we bound the main term by
\[
	\sum_{1 \le i < j \le n} X_{ij} D_i D_j \le \max_{1 \le j \le n} D_j \sum_{1 \le i < j \le n} X_{ij} D_i
	\le \frac{1}{2} \max_{1 \le j \le n} D_j \sum_{i = 1}^n D_i Y_i. 
\]
Hence, using the first part of the corollary and Proposition \ref{prop:scaling_sums_powers_regularly_varying}, it follows that
\[
	\frac{\sum_{1 \le i < j \le n} X_{ij} D_i D_j}{n^{\frac{2}{\gamma} +2 - \gamma + \delta}}
	\le \left(\frac{ \max_{1 \le j \le n} D_j}{2 n^{\frac{1}{\gamma} + \frac{\delta}{2}}}\right)
	\left(\frac{\sum_{i = 1}^n D_i Y_i}{n^{\frac{1}{\gamma} +2 - \gamma + \frac{\delta}{2}}}\right) \plim 0.
\]
\end{proof}

\subsection{Technical results for clustering}

The following result is needed for the proof of Lemma \ref{lem:sqrt}.

\begin{lemma}\label{lem:plvarying}
Let $X$ be a non-negative regularly-varying random variable with distribution~\eqref{eq:distribution_degrees}. Then, for any $0 \le a < b$,
\[
\Prob{X\in[a,b]\sqrt{n}}=\mathcal{L}(\sqrt{\mu n})n^{-\gamma/2}\gamma\int_{a}^b x^{-\gamma-1}\dd x(1+o(1)).
\]
\end{lemma}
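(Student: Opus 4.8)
The plan is to express the probability of a bounded interval as a difference of two tail probabilities, substitute the regularly-varying form \eqref{eq:distribution_degrees}, and then use that a slowly-varying function is asymptotically insensitive to a fixed multiplicative constant in its argument. First I would write, for $0 < a < b$,
\[
\Prob{X\in[a,b]\sqrt{n}} = \Prob{X > a\sqrt{n}} - \Prob{X > b\sqrt{n}} = \mathcal{L}(a\sqrt{n})(a\sqrt{n})^{-\gamma} - \mathcal{L}(b\sqrt{n})(b\sqrt{n})^{-\gamma},
\]
and pull out the common power of $n$ to get
\[
\Prob{X\in[a,b]\sqrt{n}} = n^{-\gamma/2}\Big(a^{-\gamma}\mathcal{L}(a\sqrt{n}) - b^{-\gamma}\mathcal{L}(b\sqrt{n})\Big).
\]

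Next I would invoke the defining property of slow variation (see \cite[Theorem 1.2.1]{Bingham1989}): $\mathcal{L}(ct)/\mathcal{L}(t)\to 1$ as $t\to\infty$ for every fixed $c>0$. Since $a/\sqrt{\mu}$ and $b/\sqrt{\mu}$ are positive constants, applying this with $t=\sqrt{\mu n}$ yields $\mathcal{L}(a\sqrt{n})=\mathcal{L}(\sqrt{\mu n})(1+o(1))$ and $\mathcal{L}(b\sqrt{n})=\mathcal{L}(\sqrt{\mu n})(1+o(1))$, so that
\[
\Prob{X\in[a,b]\sqrt{n}} = \mathcal{L}(\sqrt{\mu n})n^{-\gamma/2}\big(a^{-\gamma}-b^{-\gamma}\big)(1+o(1)).
\]
To finish I would identify the constant with the integral by the elementary computation $\gamma\int_a^b x^{-\gamma-1}\dd x = \big[-x^{-\gamma}\big]_a^b = a^{-\gamma}-b^{-\gamma}$, which gives the claimed form.

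The only delicate point is the boundary value $a=0$ permitted by the statement: there $\Prob{X>a\sqrt{n}}=\Prob{X>0}$ does not decay and the integral $\int_0^b x^{-\gamma-1}\dd x$ diverges, so the displayed asymptotic is meaningful only for $a>0$. This is exactly the regime in which the lemma is used (with $a=\varepsilon>0$) in the proofs of Lemmas \ref{lem:g_n_epsilon_to_f_n_epsilon} and \ref{lem:extriangconv}, so no generality is lost. I do not expect any substantive obstacle: the whole argument rests on the local uniform convergence of slowly-varying functions, and the only care needed is bookkeeping to confirm that constant shifts of the argument (such as $\sqrt{n}$ versus $\sqrt{\mu n}$) are absorbed into the $(1+o(1))$ factor.
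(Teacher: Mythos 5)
Your proposal is correct and follows essentially the same route as the paper's proof: express the probability as a difference of two tails, use slow variation to replace $\mathcal{L}(a\sqrt{n})$ and $\mathcal{L}(b\sqrt{n})$ by $\mathcal{L}(\sqrt{\mu n})(1+o(1))$, and identify $a^{-\gamma}-b^{-\gamma}$ with $\gamma\int_a^b x^{-\gamma-1}\dd x$; the paper merely adds an explicit Taylor expansion of $(b\sqrt{n}+1)^{-\gamma}$ to absorb the integer-valued endpoint correction, which your $(1+o(1))$ bookkeeping covers. Your caveat that the asymptotic is meaningful only for $a>0$ (the only regime in which the lemma is invoked) is accurate and matches the implicit assumption in the paper's argument.
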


\begin{proof}
Because $\mathcal{L}$ is a slowly varying function,
\[
	\mathcal{L}(c\sqrt{n})=\mathcal{L}(\sqrt{\mu n})(1+o(1)),
\]
for any $c\in(0,\infty)$. Furthermore, using the Taylor expansion of $(a\sqrt{n}-x)^{-\gamma}$ at $x=0$ yields 
\[
	(b\sqrt{n}+1)^{-\gamma}=(b\sqrt{n})^{-\gamma}+\gamma(b\sqrt{n})^{-\gamma-1}+O(\gamma(\gamma-1)(b\sqrt{n})^{-\gamma-2}).
\]
Because $\mathcal{L}$ is a slowly-varying function, for every constant $t$, $\lim_{n \to \infty}\mathcal{L}(t\sqrt{n})/\mathcal{L}(\sqrt{n})=1$. Thus, we obtain
\begin{align*}
	\Prob{X\in[a,b]\sqrt{n}}&=\mathcal{L}(a\sqrt{n})(a\sqrt{n})^{-\gamma}-\mathcal{L}(b\sqrt{n}+1)(b\sqrt{n})^{-\gamma}\\
	&= \mathcal{L}(\sqrt{\mu n})(1+o(1))(a\sqrt{n})^{-\gamma} - \mathcal{L}(\sqrt{\mu n})(1+o(1))(b\sqrt{n})^{-\gamma} \\
	&=(1+o(1))\mathcal{L}(\sqrt{\mu n})\left((a\sqrt{n})^{-\gamma}-(b\sqrt{n})^{-\gamma}\right)\\
	&=(1+o(1))\mathcal{L}(\sqrt{\mu n})n^{-\gamma/2}\gamma\int_{a}^{b}x^{-\gamma-1}\dd x.
\end{align*}
\end{proof}

\bibliographystyle{apt}
\bibliography{../central_limit_theorems_cm}

\end{document}